\theoremstyle{plain}
\newtheorem{theorem}{Theorem}
\newtheorem{lemma}[theorem]{Lemma}
\newtheorem{corollary}[theorem]{Corollary}
\newtheorem{conjecture}{Conjecture}
\providecommand{\R}{\mathbb{R}}
\providecommand{\Bcal}{\mathcal{B}}
\providecommand{\DD}{\mathsf{D}}
\providecommand{\NNb}{\mathbb{N}}
\providecommand{\RR}{\mathbb{R}}
\providecommand{\ZZ}{\mathbb{Z}}
\providecommand{\Conv}{\mathrm{Conv}}
\providecommand{\vol}{\mathrm{vol}}
\providecommand\llb{\llbracket}
\providecommand\rrb{\rrbracket}
\begin{document}
\title{The Davenport constant of balls and boxes}

\author{Benjamin Girard}
\address{Institut de Math\' ematiques de Jussieu - Paris Rive Gauche\\
\' Equipe Combinatoire et Optimisation\\
Sorbonne Universit\' e - Campus Pierre et Marie Curie\\
4, place Jussieu - Bo\^{\i}te courrier 247\\
75252 Paris Cedex 05}
\email{benjamin.girard@imj-prg.fr}

\author{Alain Plagne}
\address{Centre de math\'ematiques Laurent Schwartz \\
\'Ecole polytechnique\\ 
91128 Palaiseau cedex, France}
\email{alain.plagne@polytechnique.edu}

\begin{abstract}
Given an additively written abelian group $G$ and a set $X\subseteq G$, 
we let $\mathsf{D}(X)$ denote the Davenport constant of $X$, namely the largest non-negative integer $n$ for which 
there exists a sequence $x_1, \dots, x_n$ of elements of $X$ such that $\sum_{i=1}^n x_i =0$ and $\sum_{i \in I} x_i \ne 0$ 
for each non-empty proper subset $I$ of $\{1, \ldots, n\}$. 
In this paper, we mainly investigate the case when $G$ is $\mathbb{Z}^2$ and $\mathbb{Z}^3$, and $X$ is a discrete Euclidean ball. 
An application to the classical problem of estimating the Davenport constant of a box -- a product of intervals of integers -- 
is then obtained.
\end{abstract}

\subjclass[2020]{Primary: 11B75; Secondary: 11B30, 11P70, 52A05, 52A38, 52B15, 52B55}
\keywords{Additive combinatorics, convex geometry, Davenport constant, inverse theorem, minimal zero-sum sequence, rhombic dodecahedron, Steinitz constant}

\maketitle

\section{Introduction}

Let $G$ be an additively written abelian group. 
Given $X \subseteq G$, we consider the free abelian monoid over $X$.
Its elements will be called sequences.
Let $S = x_1, \dots, x_n$ be a sequence of elements of $X$. 
We shall say that
the $x_i$'s are elements of $S$ or, simply, are in $S$ (that is, we identify sequences and multisets). 
The sequences considered here are unordered.
We call \textit{support} the set of (distinct) elements of $G$ appearing in $S$. 
We call $n$ the {\em length} of $S$, which we denote by $\|S\|$.
We notice that the support of $S$ can be drastically smaller than its length.
Finally a sequence $S = x_1, \dots, x_n$ will be called a {\em zero-sum sequence}
whenever $\sum_{i=1}^n x_i = 0$. A zero-sum sequence will be called \textit{minimal} 
if $\sum_{i \in I} x_i \ne 0$ for every non-empty proper subset $I$ of $\{1, \ldots, n\}$. 

For $G$ an abelian group, the study of its minimal zero-sum sequences and their combinatorial properties is part of 
what is called {\em zero-sum theory}, a subfield of additive combinatorics with applications to group theory, graph theory, 
Ramsey theory, discrete geometry and factorization theory. Since the 1960s, the theory of these invariants has highly developed in several directions.
The interested reader is referred to the surveys \cite{GG06, GB} or \cite{Ge-HK06a} (see in particular Chapters 5, 6, and 7).

One of the earliest questions in this area, and maybe one of the most important, is concerned with the 
\emph{Davenport constant}, which has become the prototype of algebraic invariants of combinatorial flavour.
More precisely, the Davenport constant of an abelian group $G$,
denoted by $\DD (G)$, is defined as 
the maximal length of a minimal zero-sum sequence over $G$.

Assume that our abelian group $G$ is finite, and not reduced to  $\{ 0 \}$. If we decompose it as a direct sum of cyclic groups 
$G \simeq C_{n_1} \oplus \cdots \oplus C_{n_r}$ with integers $1< n_1 \mid \dots \mid n_r$ 
(here, $C_k$ denotes a cyclic group with $k$ elements, $r$ is the rank of $G$ and $n_r = \exp G$, the exponent of $G$),
an immediate lower bound for the Davenport constant is
\begin{equation}
\label{lowb}
\DD (G) \geq \sum_{i=1}^r (n_i - 1) + 1.
\end{equation}
This bound cannot be improved in general since it is an equality in the case of 
groups of rank at most two and for $p$-groups (with $p$ a prime) \cite{vEB,Ol,Ol2}.
For groups of rank at least four, equality is definitely not the rule, see \cite{Baa, vEB, Liu}. 
In the case of groups of rank three, it has been conjectured that equality still holds, but this conjecture 
remains wide open, see \cite{GG06}, and is seemingly difficult. 
Concerning upper bounds, the best general result so far is the following:
\begin{equation}
\label{vebkmesh}
\DD (G) \leq  \left( 1 + \log \frac{|G|}{\exp G} \right) \exp G,
\end{equation}
which is proved in \cite{vEBK}. We do not know really more than this in general: in spite of so much work related
to the Davenport constant over the years, its actual value has been determined only for a few additional
families of groups beyond the ones for which it was already known by the end of the 1960s.
The general impression is that, although it has a very simple definition,
computing the Davenport constant of a finite abelian group (of rank at least three) is a challenging problem.

The study of certain generalisations of the Davenport constant to subsets of not necessarily finite abelian groups 
is of great interest for its applications to factorization theory, where it can be used to describe
the arithmetic of certain monoids, see \cite{GGSS} and Chapter 3.4 of \cite{Ge-HK06a}. 
For instance, in \cite{BaethG}, Baeth and Geroldinger ask specifically for a study
of the Davenport constant of what we call a box, that is a product of intervals of integers.

Let again $G$ be a not necessarily finite abelian group and $X$ be any subset of it.  We define its Davenport constant, 
which we denote by $\DD(X)$, as the maximal length of a minimal zero-sum sequence over $X$;
this variant was first introduced by van Emde Boas in \cite{vEB}, where it is however denoted by $\mu(G, X)$.
It is trivial but worth remarking that in general, and contrary to the case where $X = G$, 
it can happen that $\DD(X)$ is finite and yet we can build arbitrarily long sequences of elements of $X$ 
with no non-empty zero-sum subsequence.
Also, it is immediate that $\DD( X) \leq \DD( G)$: 
this inequality is in general strict and it is well possible that $\DD( X)$ is finite while $\DD( G)$ is not.

Being given $m$ and $d$ two positive integers, what is called a {\em box} in what follows is a product of intervals 
of the form $\llbracket -m,m \rrbracket ^d$ 
(as usual, the notation $\llbracket a,b \rrbracket$ (for two real numbers $a \leq b$) stands for 
the set, called {\em interval}, of integers $i$ satisfying $a \leq i \leq b$ ; we shall use this notation or 
$\{ 1, \dots, n \}$ depending on the context). 

The study of the Davenport constant of a box is already documented.
In the case where $d=1$, one has $\DD (\llbracket -1,1 \rrbracket )=2$ and it is almost folklore that 
$$
 \DD (\llbracket -m,m \rrbracket ) =2m-1 
$$
for every integer $m \geq 2$ (see \cite{Lambert} for an early proof and \cite{DGS, PT, Sahs} for rediscoveries and further development).
When $d =2$, it is known \cite{PT} that $ \DD(\llb -1,1\rrb^2)=4$ and that
\begin{equation}
\label{DDbox}
(2m-1)^2 \le \DD(\llb -m,m\rrb^2) \le (2m+1)(4m+1)
\end{equation}
for every integer $m \geq 2$,
a slightly better lower bound being obtained in \cite{China}. 
And in general, for $d\ge 3$, the best known bounds \cite{PT} are
\begin{equation}
\label{DDbox3et plus}
(2m-1)^d \le \DD(\llb -m,m\rrb^d) \le \left( 2 \left(d+\frac1d -1 \right) m+1 \right)^d
\end{equation}
for every integer $m \geq 2$.

\section{New results on balls and boxes}

The main goal of the present paper is to introduce and study $\DD(X)$ in the case where 
$X$ is a discrete Euclidean ball (with radius $m$, an integer) of $\ZZ^d$ ($d$ being a positive integer), 
that is, the following set of integral points:
$$
X= \Bcal^{(d)}_m =\{ (u_1,\dots, u_d) \in \ZZ^d : u_1^2+ \cdots +u_d^2 \leq m^2 \}.
$$

Let $m$ be a positive integer. If $m=1$, then $\DD (\Bcal^{(d)}_1)=2$ for all $d \ge 1$, since it is easily seen that any minimal zero-sum sequence over $\Bcal^{(d)}_1$ of length at least $2$ must be of the form $u,-u$ for some $u \in \Bcal^{(d)}_1 \setminus \{0\}$. Now, let us assume that $m \ge 2$. In view of the double inclusion
$$
\left\llb -\left[ \frac{m}{\sqrt{d}} \right], \left[ \frac{m}{\sqrt{d}} \right] \right\rrb^d  \subseteq \Bcal^{(d)}_m \subseteq \llb -m,m \rrb^d,
$$
(as usual, $[x]$ denotes the integral part of a real $x$) we have
$$
\DD \left( \left\llb -\left[ \frac{m}{\sqrt{d}} \right], \left[ \frac{m}{\sqrt{d}} \right] \right\rrb^d \right) \leq \DD (\Bcal^{(d)}_m) \leq \DD ( \llb -m,m \rrb^d),
$$
which, in view of \eqref{DDbox3et plus}, gives for $d \geq 3$
\begin{equation}
\label{dimdgen}
\left( 2 \left[ \frac{m}{\sqrt{d}} \right] -1 \right)^d \leq \DD (\Bcal^{(d)}_m) \leq \left( 2 \left(d+\frac1d -1 \right) m+1 \right)^d,
\end{equation}
which already gives $m^d$ as the right order of magnitude of $\DD (\Bcal^{(d)}_m)$. 
When $d=2$, it follows from \eqref{DDbox} that
$$
\left( 2 \left[ \frac{m}{\sqrt{2}} \right] -1 \right)^2 \le \DD( \Bcal^{(2)}_m ) \le (2m+1)(4m+1),
$$
which gives asymptotically, as $m \to + \infty$,
\begin{equation}
\label{B2}
2 \lesssim  \frac{ \DD (\Bcal^{(2)}_m)}{m^2} \lesssim 8.
\end{equation}
Here and throughout, whenever two positive sequences $(u_n)_{n \geq 1}$ and $(v_n)_{n \geq 1}$
satisfy $\liminf_{n \rightarrow + \infty} v_n / u_n \geq 1$, we write $u_n \lesssim v_n$.

We first study more precisely the case of dimension 2 and prove the following theorem.

\begin{theorem}
\label{thmdisk}
Let $m \geq 5$ be an integer, then 
$$
\frac{64}{25} m^2 -23m +51 \leq \DD (\Bcal^{(2)}_m) \leq \frac{5 \pi}{4} \left( m+ \frac{4}{\sqrt{5}} \right)^2.
$$
Moreover, asymptotically as $m \to + \infty$,
$$
\frac{3 \sqrt{3}}{2} m^2 + O(m^{1.1}) \leq \DD (\Bcal^{(2)}_m) \leq \frac{5 \pi}{4} m^2 + O(m).
$$
\end{theorem}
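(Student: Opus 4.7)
Let $S = v_1, \ldots, v_n$ be any minimal zero-sum sequence over $\Bcal^{(2)}_m$. The basic observation is that for any ordering $\sigma$ of $S$, the partial sums $s_0 = 0, s_1, \ldots, s_{n-1}$, where $s_k = \sum_{i \leq k} v_{\sigma(i)}$, are pairwise distinct in $\ZZ^2$: otherwise $s_i = s_j$ with $0 \leq i < j \leq n-1$ would produce a proper non-empty zero sub-multiset $v_{\sigma(i+1)}, \ldots, v_{\sigma(j)}$, contradicting minimality. I specialise $\sigma$ using Bergstr\"om's 1931 refinement of Steinitz's theorem, which shows that the Steinitz constant of $(\RR^2, \|\cdot\|_2)$ equals $\sqrt{5}/2$: since $\sum v_i = 0$ and $\|v_i\|_2 \leq m$, one may choose $\sigma$ so that $\|s_k\|_2 \leq \frac{\sqrt{5}}{2} m$ for every $k$. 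Hence the $n$ distinct lattice points $s_0, \ldots, s_{n-1}$ all lie in the closed Euclidean disk of radius $\frac{\sqrt{5}}{2} m$, and a standard disjoint-unit-square estimate gives
\[
n \leq \pi\left(\frac{\sqrt{5}}{2} m + \frac{\sqrt{2}}{2}\right)^2 \leq \pi\left(\frac{\sqrt{5}}{2} m + 2\right)^2 = \frac{5\pi}{4}\left(m + \frac{4}{\sqrt{5}}\right)^2,
\]
which yields both the explicit upper bound and the asymptotic $\frac{5\pi}{4} m^2 + O(m)$.

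\textbf{Lower bound.} The constant $\frac{64}{25} = (8/5)^2$ is the area of the hexagon $H_m$ with vertices $(\pm m, 0)$ and $(\pm 3m/5, \pm 4m/5)$; thanks to the Pythagorean identity $3^2 + 4^2 = 5^2$, these six vertices all lie exactly on the circle of radius $m$, and they are integer points when $5 \mid m$. The plan is to construct a minimal zero-sum sequence over $\Bcal^{(2)}_m$ whose set of partial sums (in some ordering) fills essentially all of $H_m \cap \ZZ^2$. Concretely, one designs a cycle through $H_m \cap \ZZ^2$ whose step vectors all lie in $\Bcal^{(2)}_m$ and whose multiset of steps admits no proper zero sub-multiset, the latter being the only non-trivial requirement. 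A natural template is to partition $H_m$ into subregions meeting at the origin (e.g., three rhombi or six triangles), traverse each subregion using step vectors drawn from a pointed cone (so that no sub-multiset of intra-region steps can cancel), and splice the pieces together by a bounded number of crossover steps that close the cycle. The explicit correction $-23m + 51$ absorbs the boundary contributions, the discretisation cost when $5 \nmid m$, and the overhead of the crossover steps. For the sharper asymptotic $\frac{3\sqrt{3}}{2} m^2 + O(m^{1.1})$, one replaces $H_m$ by the regular hexagon inscribed in $\Bcal^{(2)}_m$ (area exactly $\frac{3\sqrt{3}}{2} m^2$); since its vertices $(m\cos(k\pi/3), m\sin(k\pi/3))$ have irrational coordinates, they must be approximated by suitable integer vectors of norm $\leq m$, and the Diophantine loss produces the $O(m^{1.1})$ error.

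\textbf{Main obstacle.} The difficult part is certifying minimality. This condition immediately rules out virtually any naive traversal of a lattice region, since common patterns (e.g., a boustrophedon) generate many opposite pairs $(w, -w)$ among the step multiset. The step vectors must therefore be kept in a single pointed cone for almost the entire cycle, with only a carefully bounded number of closing steps outside, ensuring that every proper sub-multiset retains a strictly positive value under a fixed linear functional. Carrying this out while still covering the full hexagon, and auditing the resulting boundary and crossover contributions sharply enough to recover the stated constants, is the technical heart of the construction.
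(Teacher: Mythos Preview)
Your upper bound is correct and essentially identical to the paper's argument: Steinitz with the Bergstr\"om--Banaszczyk constant $\sqrt{5}/2$, distinctness of partial sums by minimality, and a lattice-point count in the disk of radius $\frac{\sqrt{5}}{2}m$.

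Your lower bound, however, is only a programme, and it diverges sharply from what the paper actually does. You propose to build a Hamiltonian-type cycle through $H_m\cap\ZZ^2$ whose step multiset is a minimal zero-sum sequence, and you correctly flag that certifying minimality of such a multiset is the crux --- but you do not carry it out, and the ``pointed cone plus a few crossover steps'' heuristic is not enough: any cycle that genuinely fills a centrally symmetric region will involve steps whose positive span is the whole plane, so a single linear functional cannot witness minimality. Nothing in your outline explains how the specific constants $-23m+51$ would fall out, nor why the error for the regular hexagon should be $O(m^{1.1})$ rather than, say, $O(m)$ or $O(m^{3/2})$.

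The paper sidesteps all of this by working with sequences of support size \emph{three}. For the $64/25$ bound it takes $m'=m-(m\bmod 5)$, $a=4m'/5$, $b=3m'/5$, and uses the three vectors $(0,m')$, $(-a,-(b-1))$, $(a-1,-b)$ with explicit multiplicities $2ab-a-b+1$, $(a-1)m'$, $am'$. Minimality is then a short gcd computation: one shows $\gcd(m',2ab-a-b+1)=1$, which forces any zero subsum to use the full multiplicities. For the $\frac{3\sqrt3}{2}$ asymptotic the paper uses $(0,m)$, $(-a_m,-b_m)$, $(a_m-1,-b_m)$ with $a_m\sim\frac{\sqrt3}{2}m$ chosen so that $2a_m-1$ is prime and $b_m\sim m/2$ coprime to $m$; the $O(m^{1.1})$ comes from Baker--Harman--Pintz on prime gaps, not from Diophantine approximation of $\cos(\pi/3)$. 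The hexagon you identified does appear in the paper, but only later, as an \emph{upper} bound device (Corollary~\ref{corosuiteH}) showing these three-point constructions are asymptotically optimal among support-size-$3$ sequences.
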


To have an insight on the quality of these bounds, we compute that $64/25 =2.56$ 
and $3 \sqrt{3} /2 \simeq 2.598\dots$ while $5 \pi /4 \simeq 3.926\dots$ which improves on both 
sides of \eqref{B2}.

For this result, error terms were given. Even though we will not always give them explicitly in the rest of the paper, making the error terms precise will be most of the time possible.

To investigate the Davenport constant, we introduce the quantity
$$
\DD^{(k)} (X)  
$$
defined in the same way as $\DD (X)$, with the additional restriction on the maximum that only sequences
having a support of size $k$ are considered.

For the sake of argument, assume the ground set $X \subseteq \ZZ^d$ to be ``sufficiently regular".
For instance, one could think of $X$ as a ball of (a large enough) radius $m$, for some norm over $\mathbb{R}^d$.

On the one hand, it appears likely that a minimal zero-sum sequence of maximal length over $X$ should be full-dimensional: for, if $S=(s_1,0), \dots, (s_l,0)$ is a minimal zero-sum sequence over $X \cap( \ZZ^{d-1} \times \{0 \})$, a small perturbation $\varepsilon_i \in  \ZZ^{d-1}$ could be found for some $i \in \{1,\dots,l\}$ so that $u_i=(s_i-\varepsilon_i,1)$ and $v_i=(\varepsilon_i,-1)$ both belong to $X$. As a result, the new longer sequence over $X$ obtained from $S$ by deleting $(s_i,0)$ and adding $u_i$ and $v_i$ would also be minimal and zero-sum, a contradiction.
We thus expect a minimal zero-sum sequence of maximal length over such an $X$ to have a support of size at least $d+1$.
Indeed, seen as vectors of some finite-dimensional vector space over $\R$, the elements of a zero-sum sequence
are linearly dependent; this implies that the dimension of the vector space generated by the elements of a zero-sum sequence cannot
be larger than the size of its support minus one.

On the other hand, for statistical reasons, it is likely that the support of a minimal zero-sum sequence over $X$ should not be too large.
Indeed, the bigger the support, the bigger the mixing in the sums we can form and
thus the bigger the expected number of distinct sums
(to get a feeling of this philosophy, see for instance related results on subset sums \cite{Freiman,Plagne}).

For these reasons, being able to compute quantities like $\DD^{(d+1)} (X)$ appears to be interesting.
To the very least, one has
\begin{equation}
\label{mino2}
\DD (X) \geq  \DD^{(d+1)}(X),
\end{equation}
this lower bound being hopefully optimal.
For instance, finding a lower bound for $\DD^{(3)} (\Bcal^{(2)}_m)$ is precisely how we will obtain the lower bound in Theorem \ref{thmdisk}. 
In addition, our proof of Theorem \ref{thmdisk} can be complemented by proving that our construction of a long minimal zero-sum sequence with 
a support of size 3 is asymptotically optimal, so that we obtain the following theorem.

\begin{theorem}
\label{theooptimal}
Asymptotically, as $m \to + \infty$,
$$
\DD^{(3)} (\Bcal^{(2)}_m) \sim \frac{3 \sqrt{3}}{2} m^2.
$$
\end{theorem}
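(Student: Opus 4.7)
The lower bound is already produced by the proof of Theorem~\ref{thmdisk}, whose construction supplies a minimal zero-sum sequence over $\Bcal^{(2)}_m$ with support of size exactly $3$ and length $\tfrac{3\sqrt{3}}{2}m^2 + O(m^{1.1})$. The plan is therefore to establish the matching upper bound $\DD^{(3)}(\Bcal^{(2)}_m) \leq \tfrac{3\sqrt{3}}{2}m^2 + O(m)$, which combined with the lower bound gives the stated asymptotic.

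I would fix a minimal zero-sum sequence $S$ over $\Bcal^{(2)}_m$ whose support has size exactly $3$, written $S = a^{(\alpha)} b^{(\beta)} c^{(\gamma)}$ with $\alpha, \beta, \gamma \geq 1$. First, $0 \notin \{a,b,c\}$, since otherwise the singleton $0$ would itself be a proper zero-sum subsequence. I would then split according to whether $\{a,b,c\}$ is collinear. In the collinear case the relation $\alpha a + \beta b + \gamma c = 0$ forces the common line to pass through the origin, so $S$ can be read as a minimal zero-sum sequence on $\llb -m, m \rrb$ (after picking a generator of the line's primitive lattice direction), and hence $\|S\| \leq \DD(\llb -m, m \rrb) = 2m-1$.

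In the non-collinear case, $(a,b,c)$ spans $\R^2$, so the $\ZZ$-module of relations $\lambda a + \mu b + \nu c = 0$ has rank one. By Cramer it is generated by the primitive vector
$$(\alpha_0, \beta_0, \gamma_0) = \tfrac{1}{d}\bigl(|\det(b,c)|,\; |\det(c,a)|,\; |\det(a,b)|\bigr),$$
where $d$ is the gcd of these three integer determinants. All three components are positive precisely when $0$ lies in the interior of the triangle $abc$, which must be our case since $\alpha, \beta, \gamma > 0$. Minimality of $S$ now forces $(\alpha, \beta, \gamma) = (\alpha_0, \beta_0, \gamma_0)$, because any smaller positive integer multiple of $(\alpha_0, \beta_0, \gamma_0)$ would give a proper zero-sum subsequence of $S$. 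Decomposing the triangle $abc$ into the three sub-triangles $0ab$, $0bc$, $0ca$ yields $|\det(b,c)| + |\det(c,a)| + |\det(a,b)| = 2\,\mathrm{Area}(abc)$, and therefore
$$\|S\| = \alpha_0 + \beta_0 + \gamma_0 = \frac{2\,\mathrm{Area}(abc)}{d} \leq 2\,\mathrm{Area}(abc).$$

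To finish, I would invoke the classical geometric fact that any triangle with vertices in a closed disk of radius $m$ has area at most $\tfrac{3\sqrt{3}}{4}m^2$, with the extremum attained by the equilateral triangle inscribed in the bounding circle. This gives $\|S\| \leq \tfrac{3\sqrt{3}}{2}m^2$, which together with the $O(m)$ bound from the collinear case and the lower bound from Theorem~\ref{thmdisk} delivers the asymptotic equivalence. The step requiring the most care is the use of minimality to identify $(\alpha,\beta,\gamma)$ with the primitive relation $(\alpha_0, \beta_0, \gamma_0)$; it hinges on the rank-one nature of the relation module in the non-degenerate case, after which the argument is essentially geometric bookkeeping.
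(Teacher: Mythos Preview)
Your argument is correct, and in fact it yields the exact inequality $\DD^{(3)}(\Bcal^{(2)}_m) \le \tfrac{3\sqrt{3}}{2}m^2$ with no error term in the non-collinear case, which is sharper than what the paper states. However, the route you take is genuinely different from the paper's. The paper proves a geometric lemma showing that the hexagon $H(w_1,w_2,w_3)=\Conv(w_i,\,w_i+w_j)$ decomposes as $\bigcup_i C_i = \bigcup_i (w_i+C_i)$ for suitable parallelograms $C_i$, and then runs a Steinitz-type rearrangement so that all partial sums of the minimal zero-sum sequence stay inside $H$; the length is then bounded by the number of lattice points in $H$, approximated by its area, which is maximised for the regular hexagon. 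Your approach instead identifies $(\alpha,\beta,\gamma)$ directly with the primitive kernel vector via Cramer's rule and reads off $\|S\| \le 2\,\mathrm{Area}(abc)$, then maximises the triangle area. Amusingly, this is precisely the method the paper itself deploys later to handle the box in Theorem~\ref{th1}; you have simply transplanted it to the ball. What your approach buys is directness and a clean non-asymptotic bound; what the paper's hexagon approach buys is a geometric picture that it then lifts to dimension~3 via the rhombic dodecahedron $D(w_1,w_2,w_3,w_4)$ in the proof of Theorem~\ref{thmd4}.
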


An interesting remark here is that (asymptotically) the three vectors forming the support of the sequence leading 
to this asymptotic coefficient are the vertices of an equilateral triangle. 
This will be discussed later (see Sections \ref{sectthmdisk} and \ref{prtheopt}).

Another point of interest of our study of the Davenport constant of discrete Euclidean balls is that 
it leads to good upper bounds for the Davenport constant of boxes, for instance in dimension 2, 
which was our starting point. 

In order to state our first result in this direction, we introduce the following arithmetical function. 
For an integer $m \geq 2$, we let $q(m)$ denote the smallest prime not dividing $m$. 
The following theorem on two-dimensional boxes gives the exact value of $\DD^{(3)} (\llbracket -m,m \rrbracket ^2)$ (thus improving on \eqref{DDbox}, including the version in \cite{China}), 
and also provides the exact structure of minimal zero-sum sequences over $\llbracket -m,m \rrbracket ^2$ having a support of size $3$ and maximum length.
It will be proved in Section \ref{sectThm4}.

\begin{theorem}
\label{th1}
Let $m \geq 2$ be an integer, then 
$$
\DD^{(3)} (\llbracket -m,m \rrbracket ^2) = 4m^2 -q(m).
$$
Moreover, up to symmetries, 
there is only one minimal zero-sum sequence 
over $\llbracket -m,m \rrbracket ^2$ having a support of size $3$ and maximum length, namely
$
\begin{pmatrix}
m \\
m 
\end{pmatrix}
$
repeated $q(m)(m -1) + m$ times, 
$
\begin{pmatrix}
-m \\
m-1
\end{pmatrix}
$ 
repeated $2m^2 - mq(m)$ times, and 
$
 \begin{pmatrix}
 m-q(m) \\
-m
\end{pmatrix}
$ 
repeated $2m^2-m$ times.
\end{theorem}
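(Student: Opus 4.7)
The plan is to translate any minimal zero-sum sequence with support $\{a,b,c\}\subset\llb -m,m\rrb^2$ of size three into the primitive positive integer relation $\alpha a+\beta b+\gamma c=0$. Since $a,b,c$ are pairwise non-parallel non-zero vectors of $\ZZ^2$, the $\ZZ$-kernel of the $2\times 3$ matrix with columns $a,b,c$ is a rank-one sublattice of $\ZZ^3$, so the multiplicities are forced (up to sign) to be $(\alpha,\beta,\gamma)=(D_1,D_2,D_3)/d$, where $D_1=\det(b,c)$, $D_2=\det(c,a)$, $D_3=\det(a,b)$ and $d=\gcd(|D_1|,|D_2|,|D_3|)$. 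Positivity of all three $D_i$ is equivalent to $0$ lying in the interior of $T=\triangle(a,b,c)$, and the length is
\[
L=\alpha+\beta+\gamma=\frac{D_1+D_2+D_3}{d}=\frac{2\operatorname{Area}(T)}{d}.
\]

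For the lower bound, I would exhibit the triple $a=(m,m)$, $b=(-m,m-1)$, $c=(m-q(m),-m)$ and compute directly $D_1=q(m)(m-1)+m$, $D_2=m(2m-q(m))$, $D_3=m(2m-1)$, summing to $4m^2-q(m)$. The coprimality $\gcd(D_1,D_2,D_3)=1$ follows from a short prime argument: any common prime divisor $p$ either divides $m$, in which case $p\mid D_1$ forces $p\mid q(m)$, contradicting $q(m)\nmid m$; or is coprime to $m$, in which case $p\mid D_2$ and $p\mid D_3$ yield $p\mid q(m)-1$, forcing $p<q(m)$ with $p\nmid m$, against the minimality of $q(m)$.

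For the upper bound I would first dispose of $d\geq 2$: from $\operatorname{Area}(T)\leq 2m^2$ one gets $L\leq 2m^2<4m^2-q(m)$ as soon as $m\geq 2$. When $d=1$, the length equals the integer $2\operatorname{Area}(T)$, and I must show $2\operatorname{Area}(T)\leq 4m^2-q(m)$. Since every triangle of maximum area $2m^2$ inscribed in $\llb -m,m\rrb^2$ has, up to the $D_4$-symmetries of the box, two vertices at two adjacent corners and the third on the opposite side, any near-extremal triangle is a small integer perturbation of such a configuration. Writing $a=(m-a_1,m-a_2)$, $b=(-m+b_1,m-b_2)$, $c=(c_1,-m+c_3)$ with $a_i,b_i,c_3\geq 0$ integers, a direct expansion gives
\[
4m^2-2\operatorname{Area}(T)=m(2a_1+2b_1+a_2+b_2+2c_3)-a_1b_2-a_2b_1-a_1c_3-b_1c_3-c_1(b_2-a_2),
\]
and, using $|c_1|\leq m$ together with $q(m)<2m$, a deficit $\leq q(m)$ forces $a_1=b_1=c_3=0$; up to an extra reflection this leaves the canonical form $a=(m,m)$, $b=(-m,m-u)$, $c=(m-v,-m)$ with integers $u,v\geq 0$.

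In this reduced form $D_1=m(u+v)-uv$, $D_2=m(2m-v)$, $D_3=m(2m-u)$, and $2\operatorname{Area}(T)=4m^2-uv$; the equality $d=1$ unpacks as $\gcd(u,m)=\gcd(v,m)=\gcd(2m-u,2m-v)=1$, which in particular forces $u,v\geq 1$, so the problem reduces to minimizing $uv$ over positive integers subject to these coprimalities. The case $u=v=1$ is excluded ($\gcd=2m-1$), and every integer strictly between $1$ and $q(m)$ shares a prime factor with $m$ by the very definition of $q(m)$, so the minimum is $uv=q(m)$, attained exactly when $\{u,v\}=\{1,q(m)\}$; the remaining condition $\gcd(2m-1,2m-q(m))=\gcd(q(m)-1,2m-1)=1$ is checked by the same ``primes smaller than $q(m)$ divide $m$'' argument as in the lower bound. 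The two orderings $(1,q(m))$ and $(q(m),1)$ are exchanged by the reflection $(x,y)\mapsto(y,x)$, a symmetry of the box, yielding uniqueness of the extremal triple up to the full dihedral symmetry group of $\llb -m,m\rrb^2$. The main obstacle I anticipate is this geometric reduction: rigorously ruling out configurations in which no vertex lies at a corner of the square, and showing that every extremal triangle is indeed a $D_4$-image of the canonical one.
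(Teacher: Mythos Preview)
Your proposal follows the same route as the paper: translate to the $2\times 3$ matrix with columns $a,b,c$ and its $2\times 2$ minors, use the lower-bound construction to eliminate $d\geq 2$, reduce to a canonical form with several entries equal to $\pm m$, and finish with a coprimality-constrained optimization in the two remaining parameters. Your lower-bound verification, the elimination of $d\geq 2$, and the $(u,v)$ endgame (including the characterization $\gcd(u,m)=\gcd(v,m)=\gcd(2m-u,2m-v)=1$ and the argument that the minimum of $uv$ is $q(m)$, attained only at $\{u,v\}=\{1,q(m)\}$) are all correct and match the paper's argument essentially verbatim.

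The one step you flag as the obstacle---the reduction showing that an extremal triple must, up to $D_4$, have $a=(m,m)$, $b_1=-m$, $c_2=-m$---is precisely where the paper concentrates its effort, and it does \emph{not} argue via your deficit expansion. Instead, after normalizing to the sign pattern $\left(\begin{smallmatrix}+&-&+\\+&+&-\end{smallmatrix}\right)$, the paper proves three short lemmas in succession, each by contradiction: if $x_1\leq m-1$ (respectively $x_2\leq m-1$, respectively $y_1+z_2\geq-(2m-1)$), then one \emph{drops} the coprimality constraint, maximizes the resulting polynomial $f$ freely over the box (a routine optimization, linear in most variables and quadratic in the last two), and obtains at most $4m^2-m<4m^2-q(m)$, using $q(m)<m$ for $m\geq 3$. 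This forces $x_1=x_2=m$ and $y_1=z_2=-m$, after which the problem is exactly your $(u,v)$ analysis. The case $m=2$ is handled separately by exhaustive search. Your deficit formula could in principle be pushed through, but the cross term $c_1(b_2-a_2)$, which can absorb the entire $m(a_2+b_2)$ contribution, makes it messier than the paper's variable-by-variable relaxation; the paper's device of relaxing the gcd constraint before optimizing is what makes each step clean.
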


For every integer $m \ge 2$, denote by $S(m)$ the sequence in Theorem \ref{th1}. We conjecture the following. 

\begin{conjecture}
\label{conj8999}
Let $m \geq 2$ be an integer, then $$\DD (\llbracket -m,m \rrbracket ^2) =4m^2-q(m).$$ 
Moreover, up to symmetries, $S(m)$ is the only minimal zero-sum sequence of maximum length over $\llbracket -m,m \rrbracket ^2$.
\end{conjecture}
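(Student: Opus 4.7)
The plan is to prove both the equality $\DD(\llb -m,m \rrb^2) = 4m^2 - q(m)$ and the uniqueness of $S(m)$ up to symmetries. The lower bound $\DD(\llb -m,m \rrb^2) \geq \DD^{(3)}(\llb -m,m \rrb^2) = 4m^2 - q(m)$ is free from \eqref{mino2} together with Theorem \ref{th1}, which also exhibits $S(m)$ as a witness. All the difficulty lies in the matching upper bound. The strategy I would pursue is to show that any minimal zero-sum sequence over $\llb -m,m \rrb^2$ of length at least $4m^2-q(m)$ must have support of size exactly $3$; the uniqueness statement of Theorem \ref{th1} then delivers both the upper bound and the rigidity claim simultaneously.

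To justify the reduction to support $3$, I would first dispose of the degenerate cases. A minimal zero-sum sequence in the torsion-free group $\ZZ^2$ cannot have support of size $1$: the only candidate element is $0$, and already two copies of $0$ admit a proper zero-subsum. A support of size $2$, say $\{v,w\}$, forces $v$ and $w$ to lie on opposite rays through the origin, and the minimality of multiplicities together with the box constraint bound the length by $O(m)$, which is negligible compared with $4m^2-q(m)$. Hence one only has to rule out supports of size at least $4$ for any extremal sequence.

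For a support $v_1,\ldots,v_k \in \llb -m,m\rrb^2$ with $k \geq 4$ and multiplicities $n_1,\ldots,n_k \geq 1$ realizing a minimal zero-sum, I would run a conical Carath\'eodory argument. Since $\sum n_i v_i = 0$ with $n_i>0$, the origin sits strictly inside the positive cone generated by the $v_i$, and one can extract a $3$-subset $\{v_{i_1},v_{i_2},v_{i_3}\}$ that already positively spans $\RR^2$. The minimal primitive integer relation $a_1 v_{i_1}+a_2 v_{i_2}+a_3 v_{i_3}=0$ is then uniquely determined; by Cramer's rule, the $a_j$ are, up to a common factor, the absolute values of the $2\times 2$ lattice determinants formed by the other two vectors. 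If $a_j \leq n_{i_j}$ held for all three $j$, then extracting $a_j$ copies of $v_{i_j}$ would furnish a proper zero-subsum of $S$, contradicting minimality; hence for each such triple at least one multiplicity must be strictly smaller than the corresponding $a_j$.

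The main obstacle is to promote this local obstruction into a global strict inequality $\sum_i n_i < 4m^2-q(m)$ as soon as $k \geq 4$. Aggregating the deficiencies over the $\binom{k}{3}$ Carath\'eodory triples should translate into an area-type bound for $\sum_i n_i$ in terms of the support geometry, which one then has to combine with the analysis of the extremal lattice triangle isolated in the proof of Theorem \ref{th1} to witness a strict loss as soon as a fourth support direction is added. A plausible alternative is an exchange argument: given an extremal $S$ with support of size $\geq 4$, replace two support vectors $v_i, v_j$ by $v_i+\varepsilon$ and $v_j-\varepsilon$ for a suitable small lattice $\varepsilon$, staying in the box while strictly increasing the length, contradicting maximality. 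Both routes demand fine combinatorial-geometric control on lattice triangles inscribed in $\llb -m,m\rrb^2$, which is precisely where the openness of the conjecture concentrates; note in particular that the easy bound coming from the inclusion $\llb -m,m\rrb^2 \subseteq \Bcal^{(2)}_{m\sqrt{2}}$ and Theorem \ref{thmdisk} only yields $\DD(\llb -m,m \rrb^2) \lesssim \tfrac{5\pi}{2}m^2$, far too weak to close the gap.
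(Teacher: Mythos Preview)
This statement is labelled a \emph{Conjecture} in the paper, and the paper gives no proof of it; indeed, Theorem~\ref{improvement44} is the best the paper establishes toward the upper bound, and its asymptotic constant $5\pi/2$ leaves exactly the gap you describe at the end. So there is no ``paper's own proof'' to compare your proposal against.

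Your outline is honest about this: you correctly isolate the reduction ``every extremal minimal zero-sum sequence has support of size $3$'' as the missing step, and you do not claim to have filled it. The preliminary reductions (support $\le 2$ is negligible; the lower bound is Theorem~\ref{th1}) are fine. The Carath\'eodory/circuit step is also sound: from any positively dependent family in $\RR^2$ one can extract a positively dependent subfamily of size at most $3$, and the primitive integer relation on that triple would yield a proper zero-subsum unless some multiplicity is too small. But, as you say, turning this local obstruction into a global bound strictly below $4m^2-q(m)$ for $k\ge 4$ is exactly the unresolved point; neither the aggregation over triples nor the exchange-perturbation idea you sketch is carried out, and there is no indication in the paper that either can be made to work. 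In short, your proposal is a reasonable plan of attack, not a proof, and it does not go beyond what the paper already knows.
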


As a corollary of Theorem \ref{th1}, \eqref{mino2} and Lemma \ref{qq} on the asymptotic behaviour of $q$ 
(see Section \ref{sectThm4}) for the lower bound, and of Theorem \ref{thmdisk} together with the inequality 
$\DD (\llbracket -m,m \rrbracket ^2)  \leq \DD (\Bcal^{(2)}_{ \lceil \sqrt{2}m \rceil })$ for the upper bound, 
we obtain a new general result on the quantity $\DD (\llbracket -m,m \rrbracket ^2)$.

\begin{theorem}
\label{improvement44}
For $m \geq 2$, we have 
$$
4m^2 - O(\log m) \leq \DD (\llbracket -m,m \rrbracket ^2)  \leq  \frac{5 \pi}{2} m^2 + O(m).
$$
\end{theorem}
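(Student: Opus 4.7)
The plan is to derive both bounds as a direct corollary of results already stated, so the proof is essentially bookkeeping, and the main care needed is in tracking the error terms through a rescaling.

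For the lower bound, the first step is to apply inequality \eqref{mino2} with $X = \llbracket -m,m \rrbracket^2 \subseteq \ZZ^2$ (so $d=2$), yielding
$$
\DD(\llbracket -m,m \rrbracket^2) \;\ge\; \DD^{(3)}(\llbracket -m,m \rrbracket^2).
$$
Next I would invoke Theorem \ref{th1} to replace the right-hand side by $4m^2 - q(m)$. Finally, Lemma \ref{qq} on the asymptotic behaviour of $q$ (which, by the usual Chebyshev-type estimate on the primorial, gives $q(m)=O(\log m)$) converts $-q(m)$ into $-O(\log m)$, completing this direction.

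For the upper bound, the key observation is the inclusion $\llbracket -m,m \rrbracket^2 \subseteq \Bcal^{(2)}_{\lceil \sqrt{2}\,m\rceil}$, which holds because any lattice point $(u_1,u_2)$ with $|u_i|\le m$ satisfies $u_1^2+u_2^2 \le 2m^2 \le \lceil \sqrt{2}\,m\rceil^2$. Passing to Davenport constants preserves inclusions between subsets of $\ZZ^2$, so
$$
\DD(\llbracket -m,m \rrbracket^2) \;\le\; \DD\bigl(\Bcal^{(2)}_{\lceil \sqrt{2}\,m\rceil}\bigr).
$$
Applying the upper bound from Theorem \ref{thmdisk} with radius $M=\lceil \sqrt{2}\,m\rceil = \sqrt{2}\,m + O(1)$, I get $\DD(\Bcal^{(2)}_M) \le \tfrac{5\pi}{4}M^2 + O(M)$. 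Expanding $M^2 = 2m^2 + O(m)$ and absorbing the rescaling yields $\tfrac{5\pi}{4}\cdot 2m^2 + O(m) = \tfrac{5\pi}{2}m^2 + O(m)$, as required.

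There is no genuine obstacle: the two inequalities glue directly onto previous theorems. The only subtleties are cosmetic — verifying that $\lceil\sqrt{2}\,m\rceil \geq 5$ so that Theorem \ref{thmdisk} applies (which fails only for a few small $m$, easily absorbed into the implicit constant), and confirming that Lemma \ref{qq} delivers $q(m)=O(\log m)$ so that the lower-bound error term is indeed logarithmic rather than polynomial in $m$.
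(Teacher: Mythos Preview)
Your proposal is correct and matches the paper's own argument exactly: the lower bound via \eqref{mino2}, Theorem~\ref{th1}, and Lemma~\ref{qq}, and the upper bound via the inclusion $\llbracket -m,m\rrbracket^2 \subseteq \Bcal^{(2)}_{\lceil\sqrt{2}\,m\rceil}$ together with Theorem~\ref{thmdisk}. Your attention to the small-$m$ threshold and the logarithmic size of $q(m)$ is appropriate and also just what the paper relies on.
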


This result improves the best bounds known so far \eqref{DDbox}, in particular since $5 \pi/2 <7.854 <8$.

We continue our investigations with a result in the three-dimensional case.

\begin{theorem}
\label{thmd3}
Asymptotically as $m \to + \infty$,
$$
 \frac{16}{3 \sqrt 3}  m^3 \lesssim \DD (\Bcal^{(3)}_m) \lesssim \frac{1372}{81}\ \pi\ m^3.
$$
\end{theorem}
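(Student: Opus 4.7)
The plan is to establish the two estimates separately. For the lower bound, I mirror the equilateral-triangle construction of Theorem \ref{theooptimal} by its natural tetrahedral analog, producing a minimal zero-sum sequence over $\Bcal^{(3)}_m$ whose support has size $4$. For the upper bound, I use a Steinitz-type rearrangement of any minimal zero-sum sequence to confine its partial sums inside a Euclidean ball, and then count lattice points in that ball.

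For the lower bound, the starting point is the regular tetrahedron inscribed in the sphere of radius $m$, whose volume is $V = \frac{8 m^3}{9 \sqrt 3}$. Given four lattice vectors $v_1, \dots, v_4 \in \Bcal^{(3)}_m$ in general position with $0$ in the interior of $\Conv(v_1, \dots, v_4)$, the smallest positive integer relation $\sum_{i=1}^4 a_i v_i = 0$ has coefficients $a_i = |D_i|/g$, where $D_i = \det(v_j : j \ne i)$ is a $3 \times 3$ cofactor determinant and $g = \gcd(|D_1|,\dots,|D_4|)$; the coprimality $g = 1$ is precisely what makes the associated zero-sum sequence minimal, and its length equals $\sum_i a_i = 6\, \vol(\Conv(v_1,\dots,v_4))$. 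Specializing to lattice vertices within $O(1)$ of the ideal regular tetrahedron, together with a B\'ezout-style perturbation to enforce $g = 1$, produces a minimal zero-sum sequence of length $\frac{16 m^3}{3 \sqrt 3} + o(m^3)$, and hence the desired lower bound.

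For the upper bound, let $S = s_1, \dots, s_n$ be a minimal zero-sum sequence over $\Bcal^{(3)}_m$. I would prove a rearrangement lemma producing a permutation $\sigma$ under which every partial sum $P_k = \sum_{i \le k} s_{\sigma(i)}$ satisfies $\|P_k\|_2 \le \frac{7}{3} m + O(1)$; the constant $\frac{7}{3} = 3 + \frac{1}{3} - 1$ matches the Plagne--Tringali constant $d + 1/d - 1$ from \eqref{DDbox3et plus} at $d = 3$, and one adapts their box rearrangement argument to the Euclidean setting. Minimality of $S$ then forces $P_0, P_1, \dots, P_{n-1}$ to be pairwise distinct, for any collision $P_i = P_j$ with $i < j < n$ would produce the proper non-empty zero-sum subsequence $s_{\sigma(i+1)}, \dots, s_{\sigma(j)}$. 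Counting integer lattice points in the Euclidean ball of radius $\frac{7}{3} m + O(1)$ then gives
$$
n \le \frac{4\pi}{3}\Bigl(\frac{7m}{3}\Bigr)^3 + O(m^2) = \frac{1372\,\pi}{81}\, m^3 + O(m^2).
$$

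The main obstacle is establishing the rearrangement lemma with the specific constant $\frac{7}{3}$ for the Euclidean $3$-ball: in $2$D the finer radius $\frac{\sqrt 5}{2}\,m$ is attainable (as used to obtain Theorem \ref{thmdisk}), but in $3$D the argument likely settles for the $d + 1/d - 1$ rate, and transferring an $\ell^\infty$-style bound into the $\ell^2$ setting required here will demand a careful direction-by-direction balancing in the spirit of \cite{PT}. A secondary but routine difficulty in the lower bound is to secure coprimality $g = 1$ while keeping the lattice vertices close to the ideal regular tetrahedron, but this perturbs the leading constant only by lower-order terms.
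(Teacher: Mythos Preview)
Your overall architecture matches the paper's: a tetrahedral construction for the lower bound, and Steinitz rearrangement plus lattice-point counting for the upper bound. Two remarks.

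\textbf{Lower bound.} Your description is a correct abstraction of what the paper does, and the identity $\sum_i |D_i| = 6\,\vol(\Conv(v_1,\dots,v_4))$ when $0$ lies in the interior is exactly the reason the regular tetrahedron gives the constant $16/(3\sqrt 3)$. The paper, rather than invoking a generic ``B\'ezout-style perturbation'', writes down an explicit four-vector support with entries taken from carefully chosen primes $n_m, c_m, p_m, q_m, r_m$ (each the largest prime below a multiple of $m$), so that the coprimality conditions needed for minimality are forced by primality and size comparisons. This is a concrete realisation of the perturbation you allude to; your sketch is correct but would need to be fleshed out along similar lines.

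\textbf{Upper bound.} Here you have the right framework but misattribute the key input. The constant $7/3$ is \emph{not} obtained by adapting the $\ell^\infty$ box argument of \cite{PT} to the Euclidean setting; it is simply Banaszczyk's bound on the Steinitz constant of the Euclidean unit ball in $\R^d$, namely $C(B_1) \le d + 1/d - 1$, cited in the paper from \cite{Banasz87} (Remark~3). Once that is quoted, the proof is two lines: rescale a minimal zero-sum sequence to the unit ball, apply Steinitz to confine all partial sums to $B'_{7m/3}$, and count lattice points. There is no $\ell^\infty$-to-$\ell^2$ transfer to perform and no ``direction-by-direction balancing'' needed; the obstacle you flag does not exist. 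The coincidence of the constant $d+1/d-1$ with the one in \eqref{DDbox3et plus} is because \cite{PT} also ultimately relies on Banaszczyk's Steinitz-constant bound.
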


The constant appearing in the lower bound is equal to $3.079\dots$ and 
improves on that of  \eqref{dimdgen} by a multiplicative factor 2, 
while the constant on the right-hand side is about $53.213\dots$ 
which is significantly better than that of \eqref{dimdgen} (namely $101.630\dots$).

Using a similar approach as in Section \ref{prtheopt} for the proof of Theorem \ref{theooptimal}, we will show 
in Section \ref{optindim33} how to adapt our argument in dimension 3 and prove the following result.

\begin{theorem}
\label{thmd4}
Asymptotically, as $m \to + \infty$,
$$
\DD^{(4)} (\Bcal^{(3)}_m) \sim  \frac{16}{3 \sqrt 3} m^3.
$$
\end{theorem}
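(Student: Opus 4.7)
The strategy is identical to that of Theorem~\ref{theooptimal} in dimension two, with equilateral triangles replaced by regular tetrahedra. The lower bound $\DD^{(4)}(\Bcal^{(3)}_m)\gtrsim \frac{16}{3\sqrt{3}}m^3$ is already furnished by the construction used for the lower bound of Theorem~\ref{thmd3}, once one verifies that it produces a minimal zero-sum sequence of support size exactly four with $\gcd$ of the relevant determinants being $O(1)$. Only the matching upper bound has to be proved.

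Consider a minimal zero-sum sequence $S$ over $\Bcal^{(3)}_m$ with support $\{v_1,v_2,v_3,v_4\}$ and multiplicities $n_1,\dots,n_4\ge 1$, so that $\sum_i n_i v_i=0$. If $v_1,\dots,v_4$ are linearly dependent in $\R^3$, they lie in a plane through the origin; $S$ is then a minimal zero-sum sequence over a subset of a Euclidean disk of radius at most $m$, and Theorem~\ref{thmdisk} forces $\|S\|=O(m^2)$, which is negligible in the asymptotic. We may therefore assume the $3\times 4$ matrix $M=[v_1\mid v_2\mid v_3\mid v_4]$ has rank three, so that $\ker M$ is one-dimensional.

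By Cramer's rule, $\ker M$ is generated by $k=\bigl((-1)^{i+1}D_i\bigr)_{1\le i\le 4}$, where $D_i:=\det(v_1,\dots,\widehat{v_i},\dots,v_4)$, and $|D_i|=6V_i$ with $V_i$ the volume of the sub-tetrahedron obtained from $T:=\Conv(v_1,\dots,v_4)$ by replacing $v_i$ with $0$. The strict positivity of the $n_i$ forces all entries of $k$ to have the same (nonzero) sign, which amounts to $0$ lying in the interior of $T$; in particular $\sum_i V_i=\vol(T)=:V$. Minimality of $S$ then forces $(n_1,\dots,n_4)$ to be the primitive positive integer solution of $Mk=0$, namely $n_i=|D_i|/d$ with $d:=\gcd(|D_1|,\dots,|D_4|)\ge 1$, whence
$$
\|S\|=\sum_{i=1}^4 n_i=\frac{6V}{d}\le 6V.
$$
To conclude, we invoke the classical extremal fact that a tetrahedron inscribed in a Euclidean ball of radius $R$ has volume at most $\frac{8\sqrt{3}}{27}R^3$, with equality for the regular tetrahedron (provable by first pushing each vertex to the sphere and then symmetrizing, or by a Lagrange-multiplier computation). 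With $R=m$ this gives $V\le \frac{8\sqrt{3}}{27}m^3$, hence
$$
\|S\|\le 6V \le \frac{16\sqrt{3}}{9}m^3=\frac{16}{3\sqrt{3}}m^3,
$$
which is the desired upper bound.

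The principal obstacle is the extremal-geometry step (the regular tetrahedron maximizes volume among tetrahedra inscribed in a ball); though classical, it needs a careful argument. A secondary subtlety is confirming that the lower-bound construction inherited from Theorem~\ref{thmd3} does realize support size exactly four with $d$ asymptotically negligible, so that the lower and upper bounds match.
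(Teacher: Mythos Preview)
Your argument is correct, and it takes a genuinely different route from the paper's.

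The paper proceeds geometrically: it introduces the rhombic dodecahedron $D(w_1,w_2,w_3,w_4)$, proves a decomposition lemma $D=\bigcup_i D_i=\bigcup_i(w_i+D_i)$ into four parallelepipeds, and then runs a Steinitz-type greedy algorithm to show that the partial sums of any minimal zero-sum sequence with support $\{w_1,\dots,w_4\}$ can be kept inside $D$. This bounds $\|S\|$ by the number of lattice points in $D$, which is then approximated by $\vol(D)$. Finally an appendix maximizes $\vol(D)$ over four vectors in the ball.

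You bypass all of this with Cramer's rule: the multiplicity vector $(n_1,\dots,n_4)$ must be the primitive generator of the one-dimensional kernel, hence $\|S\|=\sum_i|D_i|/d\le\sum_i|D_i|=6\vol(T)$, and you then maximize the tetrahedron volume in the ball. The two routes meet at the same extremal problem, since $\vol(D(w_1,\dots,w_4))=\sum_i\vol(D_i)=\sum_i|D_i|=6\vol(T)$; but your bound is exact rather than asymptotic and needs no lattice-point approximation. Your approach is shorter and more elementary for this specific theorem; the paper's framework, on the other hand, is what supports the general $d$-dimensional Lemma~\ref{wCCC} and the Steinitz-constant viewpoint used elsewhere.

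Two small remarks. First, your ``secondary subtlety'' about the lower bound is a non-issue: the construction in Section~\ref{sectionthmd3} has support of size exactly four and its minimality (hence its length) is proved there directly, with no appeal to a gcd. Second, the fact that the regular tetrahedron maximizes volume among tetrahedra inscribed in a ball, while standard, is exactly what the paper's appendix establishes (in the equivalent rhombic-dodecahedron formulation); you should either cite a source or sketch the symmetrization/Lagrange argument you allude to.
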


As in dimension 2, the optimal choice in dimension $d=3$ for the elements of the support of a minimal zero-sum sequence 
with maximal length is close to a regular $(d+1)$-hedron, a tetrahedron. 
This geometric fact as well as another will help us coin a general conjecture in any dimension.
See Conjecture \ref{conj78} below.

Going back to the case of boxes, after having checked (an exhaustive search can be done easily) that
$$
\DD^{(4)} ( \llbracket -1,1 \rrbracket ^3)= 10,
$$ 
we prove the following.

\begin{theorem}
\label{thmbox3}
For $m \geq 2$, we have
$$
\DD^{(4)} ( \llbracket -m,m \rrbracket ^3) \geq 
\left\{
\begin{array}{ll}
16m^3 - 16m^2 +10m-2 	& {\mathrm{ if }}\ m\  \mathrm{ is\ odd,} \\
16m^3 - 16m^2 +8m-1 	& \mathrm{ if }\ m\  \mathrm{ is\ even.}
\end{array}
\right.
$$
\end{theorem}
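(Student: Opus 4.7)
The plan is to adapt the explicit construction strategy of Theorem~\ref{th1} (two-dimensional box) and Theorem~\ref{thmd4} (three-dimensional ball): for each $m \ge 2$, I would exhibit four lattice vectors $v_1,v_2,v_3,v_4 \in \llbracket -m,m\rrbracket^3$ together with positive integer multiplicities $a_1,a_2,a_3,a_4$ such that $a_1 v_1+a_2 v_2+a_3 v_3+a_4 v_4=0$, and check that the resulting sequence is a minimal zero-sum sequence of length $\sum a_i$ matching the announced value. The construction depends on the parity of $m$, explaining the case split in the statement.

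The geometric template is provided by Theorem~\ref{thmd4}: the optimal support in the ball is asymptotically a regular tetrahedron, and the natural regular tetrahedron inscribed in the cube is formed by four of its vertices, say $(m,m,m)$, $(m,-m,-m)$, $(-m,m,-m)$, $(-m,-m,m)$. These four corners already sum to zero and the unique integer relation among them is $(1,1,1,1)$, yielding a trivial minimal zero-sum sequence of length only~$4$; this is because the four $3\times 3$ determinants $\det(v_j,v_k,v_l)$ share the common magnitude $4m^3$, which is the maximum of $|\det|$ over all $3\times 3$ matrices with entries in $\{-m,\dots,m\}$. The idea is then to perturb the tetrahedron by $O(1)$ in a few well-chosen coordinates so that, denoting by $V_i$ the $3\times 3$ matrix obtained by deleting the $i$-th column of $[v_1|v_2|v_3|v_4]$: (i) each $|\det(V_i)|$ remains equal to $4m^3 + O(m^2)$, (ii) the Cramer coefficients $\bigl((-1)^{i+1}\det V_i\bigr)_i$ all have the same sign, and (iii) their gcd equals~$1$, so that this vector is itself the primitive kernel relation.

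Once such a perturbation is identified, minimality is automatic: since $v_1,v_2,v_3,v_4$ still span $\R^3$ (the perturbation is only $O(1)$), the integer kernel of the $3\times 4$ matrix $[v_1|v_2|v_3|v_4]$ is generated by the primitive Cramer vector, so any zero-sum subsequence $(b_i)$ with $0 \le b_i \le a_i$ must be a non-negative integer multiple of $(a_1,\dots,a_4)$; this leaves only $(0,\dots,0)$ and $(a_1,\dots,a_4)$, ruling out proper zero-sum subsums. The length is then $\sum_{i=1}^{4} |\det V_i|$, and a direct expansion of these determinants yields the claimed quantities.

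The main obstacle is condition~(iii): even mildly symmetric perturbations of the four corners tend to leave each Cramer determinant divisible by a common factor of order $m^2$ or $2m$ (a phenomenon easy to observe in attempts where one or two coordinates of a single vertex are nudged by $\pm 1$), which collapses the primitive length from $\Theta(m^3)$ down to $O(m)$ and destroys the bound. Breaking this divisibility requires perturbations producing a non-trivial constant term in the $m$-polynomial expansion of the four determinants, and the parity of~$m$ governs which perturbation patterns succeed while still keeping each $|\det V_i|$ as close as possible to $4m^3$. With the correct vertex choices in hand for each parity (and a careful verification that all four Cramer coefficients have matching signs), the remaining work reduces to direct determinantal computations producing the precise values $16m^3-16m^2+10m-2$ when $m$ is odd and $16m^3-16m^2+8m-1$ when $m$ is even.
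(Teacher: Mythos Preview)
Your strategy matches the paper's exactly: perturb the tetrahedron $(-m,-m,-m)$, $(-m,m,m)$, $(m,-m,m)$, $(m,m,-m)$ by $O(1)$ in a few coordinates, compute the primitive Cramer relation, and read off the length as $\sum_i |\det V_i|$. Your minimality argument via the one-dimensional integer kernel is correct and in fact slightly cleaner than the paper's, which unwinds the same idea through a chain of substitutions.

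The gap is that you never actually produce the perturbed vectors, and---as you yourself point out---this is the entire substance of the proof. Saying ``with the correct vertex choices in hand \dots\ the remaining work reduces to direct determinantal computations'' is not a proof; it is a promise that a proof exists. The paper's constructions are not the first thing one would guess: for odd $m$ the vectors are $(-m,-m,-m)$, $(-m,m-1,m)$, $(m,-m,m-2)$, $(m-1,m,-m)$, while for even $m$ the third vector becomes $(m,-(m-1),m-1)$ instead. Note in particular that three of the four vertices are perturbed (not one or two), that the perturbations are not symmetric, and that in the odd case one coordinate is moved by $2$ rather than $1$. You have correctly diagnosed why naive perturbations fail (the determinants acquire a large common factor), but you have not shown how to overcome it; until you write down specific vectors and verify $\gcd(\det V_1,\dots,\det V_4)=1$ together with the sign condition, the argument is incomplete.
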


In view of (\ref{mino2}), this improves on inequality \eqref{DDbox3et plus} which, asymptotically,  gave only $8m^3$ as a lower bound. 

As a corollary of Theorem \ref{thmbox3} and \eqref{mino2} on one side, and of Theorem \ref{thmd4} together with the inequality 
$\DD^{(4)}  ( \llbracket -m,m \rrbracket ^3) \leq \DD^{(4)} ( \Bcal^{(3)}_{ \lceil \sqrt{ 3} m \rceil})$ on the other, we obtain the following as a final result.

\begin{theorem}
\label{thmbox3bis}
Asymptotically, as $m \to + \infty$,
$$
\DD^{(4)} (\llbracket -m,m \rrbracket ^3) \sim  16 m^3.
$$
\end{theorem}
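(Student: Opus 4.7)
The plan is to realize this statement as a sandwich between the lower bound already established in Theorem \ref{thmbox3} and the upper bound transferred from the Euclidean ball via Theorem \ref{thmd4}. Because both ingredients are in place, the proof reduces to two short verifications and an asymptotic accounting; I expect no real obstacle, only care with the radius scaling.

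For the lower bound, I would simply quote Theorem \ref{thmbox3}: for every integer $m \ge 2$, whether $m$ is odd or even, the explicit lower bound has the form $16 m^3 - 16 m^2 + O(m)$, hence
$$
\liminf_{m \to + \infty} \frac{\DD^{(4)}(\llb -m,m \rrb^3)}{16 m^3} \ge 1,
$$
which in the notation of the paper reads $16 m^3 \lesssim \DD^{(4)}(\llb -m,m \rrb^3)$. This step does not require \eqref{mino2} at all, since Theorem \ref{thmbox3} already concerns $\DD^{(4)}$ directly.

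For the upper bound, the key observation is the elementary inclusion $\llb -m,m \rrb^3 \subseteq \Bcal^{(3)}_{\lceil \sqrt{3}\, m \rceil}$: any $(u_1,u_2,u_3)\in\llb -m,m\rrb^3$ satisfies $u_1^2+u_2^2+u_3^2 \le 3m^2 \le \lceil \sqrt{3}\, m \rceil^2$. Since a minimal zero-sum sequence with support of size $4$ over the box is, verbatim, a minimal zero-sum sequence with support of size $4$ over the ball, this inclusion yields
$$
\DD^{(4)}(\llb -m,m \rrb^3) \le \DD^{(4)}\bigl( \Bcal^{(3)}_{\lceil \sqrt{3}\, m \rceil} \bigr).
$$
Now I would plug in Theorem \ref{thmd4}, which gives $\DD^{(4)}(\Bcal^{(3)}_M) \sim \tfrac{16}{3\sqrt{3}}\, M^3$ as $M \to +\infty$. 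Setting $M = \lceil \sqrt{3}\, m \rceil = \sqrt{3}\, m + O(1)$, we obtain $M^3 = 3\sqrt{3}\, m^3 + O(m^2)$, and therefore
$$
\DD^{(4)}\bigl( \Bcal^{(3)}_{\lceil \sqrt{3}\, m \rceil} \bigr) \sim \frac{16}{3\sqrt{3}} \cdot 3\sqrt{3}\, m^3 = 16 m^3,
$$
which gives $\DD^{(4)}(\llb -m,m \rrb^3) \lesssim 16 m^3$.

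Combining the two inequalities yields $\DD^{(4)}(\llb -m,m \rrb^3) \sim 16 m^3$ as $m \to + \infty$. The only mildly delicate point is making sure that the ceiling $\lceil \sqrt{3}\, m \rceil$ contributes only a lower-order error after being cubed, which is transparent. No inverse or structural information is needed here: the proof is purely the concatenation of the two already established theorems with the trivial Euclidean containment of the cube inside the circumscribed ball.
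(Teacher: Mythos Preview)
Your proof is correct and matches the paper's approach exactly: the lower bound comes from Theorem~\ref{thmbox3} and the upper bound from the inclusion $\llb -m,m\rrb^3 \subseteq \Bcal^{(3)}_{\lceil \sqrt{3}\,m\rceil}$ combined with Theorem~\ref{thmd4}. Your observation that \eqref{mino2} is not actually needed here is accurate---the paper's reference to it is redundant, since Theorem~\ref{thmbox3} already bounds $\DD^{(4)}$ directly.
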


As stated above, it is clear that we may extend the questions, some constructions and some geometric interpretations 
we have proposed here in any dimension. This will be explained in Section \ref{sec88}.
Our conjecture is that using an optimal distribution of the $d+1$ points on the $(d-1)$-sphere in dimension $d$, 
and charging these points adequately leads to the value of $\DD^{(d+1)} (\Bcal^{(d)}_m)$ provided that $m$ is large enough in terms of $d$.
It also leads to the following conjecture on the Davenport constant of the ball in any dimension.

\begin{conjecture}
\label{conj78}
Let $d\geq 2$ be an integer. Then, for $m \ge \sqrt{d}$, one has
$$
\DD (\Bcal^{(d)}_m) = \DD^{(d+1)} (\Bcal^{(d)}_m) \underset{m \to + \infty}{\sim}  \left( \frac{(d+1)^{d+1}}{d^d} \right)^{1/2} m^d.
$$
\end{conjecture}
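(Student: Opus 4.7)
The plan is to break Conjecture~\ref{conj78} into two independent parts: (i) the asymptotic formula $\DD^{(d+1)}(\Bcal^{(d)}_m) \sim C_d\, m^d$ with $C_d = \sqrt{(d+1)^{d+1}/d^d}$, and (ii) the equality $\DD(\Bcal^{(d)}_m) = \DD^{(d+1)}(\Bcal^{(d)}_m)$ valid for $m \ge \sqrt{d}$. Part (i) should follow by combining a volumetric principle with an explicit construction in the spirit of Theorems~\ref{theooptimal} and~\ref{thmd4}; part (ii) is the genuine obstacle.

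For (i), I would first establish a general volumetric reformulation. Given $d+1$ vectors $v_1,\dots,v_{d+1} \in \ZZ^d$ in general position, the $\ZZ$-module of integer relations $\sum n_i v_i = 0$ is free of rank $1$, generated by a primitive vector $(n_1^*,\dots,n_{d+1}^*)$ with $n_i^* = (-1)^i \det(v_1,\dots,\hat v_i,\dots,v_{d+1})/g$, where $g$ is the gcd of these cofactors. When $0$ lies in the interior of $T := \Conv(v_1,\dots,v_{d+1})$ all $n_i^*$ share the same sign, and since the simplices $\Conv(0,v_1,\dots,\hat v_i,\dots,v_{d+1})$ partition $T$, the maximum-length minimal zero-sum sequence with support exactly $\{v_1,\dots,v_{d+1}\}$ has length $\sum_i |n_i^*| = d!\,\vol(T)/g$. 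The upper bound in (i) is then immediate: by a classical extremal result, among $d$-simplices inscribed in a ball of radius $m$ the regular one has maximum volume $V_d\, m^d$ with $V_d = (d+1)^{(d+1)/2}/(d!\,d^{d/2})$, hence $\DD^{(d+1)}(\Bcal^{(d)}_m) \leq d!\,V_d\,m^d = C_d\, m^d$.

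For the matching lower bound, I would suitably rotate and scale a regular $d$-simplex of circumradius close to $m$ and round its vertices to the integer lattice, ensuring (for a generic rotation, chosen via an equidistribution argument as in Sections~\ref{sectthmdisk} and~\ref{optindim33}) that the resulting $v_i$ still lie in $\Bcal^{(d)}_m$, that $0$ remains in the interior of $\Conv(v_i)$, that $\vol(T) = V_d\, m^d - O(m^{d-1})$, and that the cofactor gcd satisfies $g = 1$ (or at least $g = m^{o(1)}$). Applying the volumetric formula then yields a minimal zero-sum sequence of length $(1+o(1))\,C_d\, m^d$. This rounding-plus-coprimality step is technical but appears tractable by the methods already developed for $d=2,3$.

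The genuinely hard part is (ii): ruling out minimal zero-sum sequences of support size $k \ge d+2$ that are asymptotically longer than $C_d\, m^d$. The natural attempt is to exploit Carath\'eodory's theorem: given such a sequence $S$ with support $\{v_1,\dots,v_k\}$ and multiplicities $n_i$, extract a $(d+1)$-subset $U = \{v_{i_0},\dots,v_{i_d}\}$ whose convex hull contains $0$ in its interior; the volumetric formula above produces a minimal zero-sum sub-relation supported on $U$, whose realizability as a sub-multiset of $S$ is forbidden by minimality, thereby forcing some multiplicity $n_{i_j}$ to be small. Iterating and combining with the volumetric bound should, in principle, reduce the problem to the $k = d+1$ case. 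The technical crux is a quantitative Carath\'eodory-type selection preserving the volume of the chosen sub-simplex up to a manageable loss, and I do not see how to implement it cleanly; the fact that Conjecture~\ref{conj8999} and the matching upper bounds in Theorems~\ref{thmdisk} and~\ref{thmd3} remain open strongly suggests that a genuinely new structural insight into minimal zero-sum sequences over $\ZZ^d$ will be required to resolve (ii), and hence Conjecture~\ref{conj78}, in full.
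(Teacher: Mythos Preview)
The statement is a \emph{conjecture}: the paper does not prove it, and Section~\ref{sec88} is devoted only to heuristic support. Your proposal correctly recognises this, splitting the problem into the asymptotic for $\DD^{(d+1)}$ (part~(i)) and the equality $\DD=\DD^{(d+1)}$ (part~(ii)), and identifying (ii) as the genuine obstacle. On this last point you and the paper are in full agreement: the paper offers no argument whatsoever for (ii), and your Carath\'eodory-extraction idea, while natural, is indeed not known to close the gap.

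Where your approach to (i) differs is in the mechanism for the upper bound on $\DD^{(d+1)}$. You go directly through Cramer's rule: the unique primitive integer relation among $v_1,\dots,v_{d+1}$ has $\ell_1$-norm $d!\,\vol(T)/g$, and you then invoke the classical fact that the regular simplex maximises volume among simplices inscribed in a ball. The paper instead builds the polytope $L(w_1,\dots,w_{d+1})$ (Lemma~\ref{wCCC}), shows via a Steinitz-type reordering that the partial sums of any minimal zero-sum sequence stay inside $L$ (Corollary following it), and bounds the length by the number of lattice points in $L$, whose volume it then computes to be $d!\,V_d$. Both routes yield the same constant $C_d$; yours is sharper (it is an exact formula rather than a lattice-point bound) and shorter, and your appeal to the simplex-maximisation result is legitimate---the paper in fact treats that maximisation as something to be verified, carrying it out laboriously for $d=3$ in the Appendix and only \emph{conjecturing} it for general $d$, whereas it is indeed classical.

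For the lower bound in (i), both you and the paper sketch the same strategy---perturb the vertices of a regular simplex to nearby integer points and arrange coprimality so that $g=1$---and neither carries it out in full for arbitrary $d$. So on (i) your outline is at least as complete as the paper's, arguably more so; on (ii) neither you nor the paper has a proof, and your closing assessment that a new structural idea is needed is exactly the paper's implicit position.
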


The cases $d=1,$ 2 or 3 of the second part of the conjecture are precisely the cases we treated above. 
Also, note that in the case of the box, one could be tempted to conjecture that $\DD (\llbracket -m,m \rrbracket ^d)/m^d$ has a limit too when $d$ is fixed and $m$ tends to $+\infty$ (for instance, a natural analogue of this problem in the torsion case is solved, see Theorem $1$ in \cite{G18}). However, and contrary to the case of the ball above, there is no guess as to what this limit could be in terms of $d$.

\section{Proof of Theorem \ref{thmdisk}: lower bounds}
\label{sectthmdisk}

As for the lower bound, we start by considering the remainder $r$ of $m$ in the Euclidean division of $m$ by 5. 
We denote $m' =m-r$ which is divisible by 5.

Let
$$
a = \frac{4m'}{5} \quad  {\rm and } \quad b = \frac{3m'}{5}.
$$
Notice that $a^2+b^2= m'^2$ thus the three points 
$
\begin{pmatrix}
0 \\
m' 
\end{pmatrix}, 
\begin{pmatrix}
-a \\
-(b-1) 
\end{pmatrix}   
$
and $\begin{pmatrix}
a-1 \\
-b 
\end{pmatrix}$ 
all belong to $\Bcal^{(2)}_{m'} \subseteq \Bcal^{(2)}_{m}$.

We consider the sequence $S_1$ consisting of 
$
\begin{pmatrix}
0 \\
m' 
\end{pmatrix}
$
repeated $ab+(a-1)(b-1)=2ab-a-b+1$ times, 
$
\begin{pmatrix}
-a \\
-(b-1) 
\end{pmatrix}
$ 
repeated $(a-1)m'$ times and 
$
\begin{pmatrix}
a-1 \\
-b
\end{pmatrix}
$ 
repeated $am'$ times.
It is a zero-sum sequence since
$$
(ab+(a-1)(b-1)) 
\begin{pmatrix}
0 \\
m' 
\end{pmatrix}
+ (a-1)m'
\begin{pmatrix}
-a \\
-(b-1) 
\end{pmatrix}
+ am'
\begin{pmatrix}
a-1 \\
-b
\end{pmatrix}
= 
\begin{pmatrix}
0 \\
0
\end{pmatrix}.
$$

It remains to show that $S_1$ is minimal. 
Assume 
\begin{equation}
\label{2coord}
k_1 \begin{pmatrix}
0 \\
m' 
\end{pmatrix}
+ k_2 
\begin{pmatrix}
-a \\
-(b-1) 
\end{pmatrix}
+ k_3 
\begin{pmatrix}
a-1 \\
-b
\end{pmatrix}
= 
\begin{pmatrix}
0 \\
0
\end{pmatrix}
\end{equation}
with 
\begin{equation}
\label{bound0}
k_1 \leq 2ab-a-b+1,\quad  k_2 \leq (a-1)m'\quad {\rm and }\quad  k_3 \leq am'.
\end{equation}
Considering the first coordinate in \eqref{2coord} shows that $k_2 a = k_3 (a-1)$ which implies, since $a$ and $a-1$ 
are coprime, that there exists an integer $k$ such that 
$$
k_2 = (a-1)k\quad {\rm and} \quad k_3 = ak.
$$
Considering the second coordinate in \eqref{2coord}  shows that 
\begin{equation}
\label{k1}
k_1 m' = (b-1) k_2+bk_3 = \left( (a-1)(b-1) + ab \right)k = (2ab-a-b+1)k.
\end{equation}

We observe that $\gcd (m', 2ab-a-b+1)=1$. Indeed, this assertion is equivalent to 
$$
\gcd (25m',  2 \cdot 4m'\cdot 3m' -  5\cdot 4m' -5\cdot 3m' +25)=\gcd (25m',  24 m'^2 -  35 m'+25)=25.
$$
Any prime $p$ dividing $25m'$ must divide $m'$, since $5$ divides $m'$. 
Therefore such a $p$ has to divide $24 m'^2 -  35 m'$ and finally $25$: this gives $p=5$. 
However, while $25$ divides $24 m'^2 -  35 m'+25$, $125$ doesn't due to the fact 
that the polynomial $4X^2+3X+1$ doesn't vanish mod $5$.
This proves our assertion.

Finally, with this coprimality result, equality \eqref{k1} shows that $(2ab-a-b+1) \mid k_1$ and $m' \mid k$ which in view of \eqref{bound0} shows that
$$
k_1 = 2ab-a-b+1,\quad k=m',\quad k_2 = (a-1)m'\quad {\rm and} \quad k_3 = am'.
$$
This proves our assertion on the minimality of the sequence $S_1$.

We finally obtain
\begin{eqnarray*}
\DD (\Bcal^{(2)}_m)  & \geq	& \DD (\Bcal^{(2)}_{m'}) \\
			& \geq 	& \|S_1 \| \\
			& =		& k_1 + k_2 + k_3 \\
			& =		& 2a(b+m')-a-b-m'+1 \\
			& \geq 	& \frac{64}{25} m'^2 -\frac{12}{5} m' +1.
\end{eqnarray*}
Since this lower bound is an increasing function of $m'$ on positive integers and $m' =m-r$ with $0 \leq r \leq 4$ we 
obtain
$$
\DD (\Bcal^{(2)}_m) \geq \frac{64}{25} (m-4)^2 -\frac{12}{5} (m-4) +1 = \frac{64}{25} m^2 -\frac{572}{25} m + \frac{1289}{25}
$$
and our first result follows.

We now come to the second assertion in our theorem. 
Let $m$ be a large enough integer. 
We consider $p_m$ the largest prime $\leq 2 \lfloor  \sqrt{3}m/2 \rfloor-1$ 
and define $a_m = (1+p_m)/2$. We notice immediately that 
$$
a_m \sim \frac{p_m}{2} \sim \frac{\sqrt{3}}2 m
$$
since $p_m \sim  2 \lfloor  \sqrt{3}m/2 \rfloor \sim \sqrt{3}m$ 
in view of the known results on the gap between consecutive primes, see for instance \cite{BHP}. 
More precisely, these results imply that $p_m = \sqrt{3} m + O(m^{.55})$ which is by far enough 
to get a good error term in our estimate below.

We now define the following integer:
$$
b_m = \left\{ 
\begin{array}{ll}
\frac{m-1}{2} & {\rm if }\,\, m\,\, {\rm is\, odd},\\
\frac{m}{2} -1 &{\rm if }\,\, m \equiv 0 \pmod 4, \\
\frac{m}{2} -2 &{\rm if }\,\, m \equiv 2 \pmod 4.
\end{array}
\right.
$$
A simple computation shows that $\gcd(m, b_m)=1$, whatever the value of $m$.

We consider the sequence $S_2$ consisting of 
$
\begin{pmatrix}
0 \\
m
\end{pmatrix}
$ repeated $(2a_m-1)b_m$ times, 
$
\begin{pmatrix}
-a_m \\
-b_m 
\end{pmatrix}
$ 
repeated $(a_m-1)m$ times and 
$
\begin{pmatrix}
a_{m}-1 \\
-b_m 
\end{pmatrix}
$ 
repeated $a_m m$ times.
Its elements all belong to $\Bcal^{(2)}_m$ since the norm of the second and third vectors are less than or equal to
$$
\sqrt{a_m^2 + b_m^2} \leq \sqrt{ \lfloor  \sqrt{3}m/2 \rfloor^2 + \frac{m^2}{4}} \leq \sqrt{ \left( \frac{\sqrt{3}m}{2} \right)^2 + \frac{m^2}{4}} =m.
$$
It is a zero-sum sequence since
$$
(2a_m-1)b_m 
\begin{pmatrix}
0 \\
m
\end{pmatrix}
+ (a_m-1)m
\begin{pmatrix}
-a_m \\
-b_m 
\end{pmatrix}
+ a_m m
\begin{pmatrix}
a_{m}-1 \\
-b_m 
\end{pmatrix}
= 
\begin{pmatrix}
0 \\
0
\end{pmatrix}
.
$$

It remains to show that $S_2$ is minimal. 
Assume 
$$
k_1
\begin{pmatrix}
0 \\
m
\end{pmatrix}
+ 
k_2
\begin{pmatrix}
-a_m \\
-b_m 
\end{pmatrix}
+ 
k_3
\begin{pmatrix}
a_{m}-1 \\
-b_m 
\end{pmatrix}
= 
\begin{pmatrix}
0 \\
0
\end{pmatrix}
$$
with 
\begin{equation}
\label{bound1}
k_1 \leq (2a_m-1)b_m,\quad  k_2 \leq (a_m-1)m\quad {\rm and }\quad  k_3 \leq a_m m.
\end{equation}
As above, and since $\gcd (a_m, a_m -1)=1$, considering the first coordinate shows that there exists an integer $k$ such that 
$$
k_2 = (a_m-1)k\quad {\rm and} \quad k_3 = a_m k.
$$
Considering the second coordinate shows that 
\begin{equation}
\label{k12}
k_1 m  = (k_2+k_3) b_m = (2a_m-1)b_m k=p_m b_m k.
\end{equation}

We observe that, when $m$ is large enough, $\gcd (m, b_m p_m)=1$. Indeed when $m$ is large, $p_m >m$ 
since it is $\sim \sqrt{3}m$, thus $p_m$ being prime gives $\gcd (m,p_m)=1$;  
while $m$ and $b_m$ are always coprime as mentioned earlier.

Finally, using the fact that $\gcd (m, b_m p_m)=1$, equation \eqref{k12} yields $(2a_m-1) b_m \mid k_1$ and $m \mid k$ 
which in view of \eqref{bound1} shows that
$$
k_1 = (2a_m-1)b_m,\quad k=m,\quad k_2 = (a_m-1)m\quad {\rm and} \quad k_3 = a_m m.
$$
This proves our assertion on the minimality of the sequence $S_2$.

We finally obtain
\begin{eqnarray*}
\DD (\Bcal^{(2)}_m) & \geq 	& \|S_2 \| \\
			& =	& (2a_m-1)(b_m+m) \\
			& \sim & 2a_m(b_m+m)\\
			& \sim& 2 \frac{\sqrt{3}}2 m \times \frac{3m}{2}\\
			& \sim & \frac{3 \sqrt{3}}{2} m^2.		
\end{eqnarray*}

The error term in our result comes from the remark made after the definition of $a_m$ above.

\section{Proof of Theorem \ref{thmdisk}: the upper bound}
\label{useofsteinitz}

We proceed as in Lemma 5 of \cite{PT}. Let $k=\DD ( \Bcal^{(2)}_m  )$  and $u_1, \dots, u_k$ 
be a minimal zero-sum sequence of elements of $\Bcal^{(2)}_m $ attaining this bound.

Let us recall the theorem of Steinitz \cite{Stein13} who gave the first complete proof of the following result.

\begin{theorem}
\label{steinitzthm}
Let $d$ be a positive integer and $U \subseteq \RR^d$ be such that $0 \in U$. 
There exists a constant $c$ such that whenever $v_1, \ldots, v_n \in U$ and $v_1 + \cdots + v_n = 0$, 
there is a permutation $\pi$ of $\{ 1,\dots, n \}$ such that $v_{\pi(1)} + \cdots + v_{\pi(i)} \in c \cdot U$ 
for each $i \in \{ 1,\dots, n \}$.
\end{theorem}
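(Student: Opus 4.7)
The plan is to carry out the classical proof of Steinitz's theorem, in the sharpened form due to Grinberg and Sevastyanov, which yields a constant $c$ depending only on $d$ when $U$ is the closed unit ball of a norm on $\RR^d$. A routine reduction handles the general statement: if $U$ is not itself convex and symmetric, one would pass to the symmetric convex hull $K = \mathrm{Conv}(U \cup (-U))$, prove the partial-sum bound inside $c \cdot K$, and then deduce the bound inside a suitable dilate of $U$ by absorbing a factor depending on $U$ into the constant.

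Once reduced to the normed setting, the proof proceeds greedily with an auxiliary fractional bookkeeping. At stage $i$, after the first $i$ vectors $v_{\pi(1)}, \ldots, v_{\pi(i)}$ have been chosen with partial sum $s_i$, I would consider the polytope of fractional weights
$$
P_i = \Bigl\{ \lambda = (\lambda_j)_{j \in R_i} \in [0,1]^{R_i} \; : \; \sum_{j \in R_i} \lambda_j v_j = -s_i \Bigr\},
$$
where $R_i$ is the set of remaining indices. This polytope is non-empty (the all-ones vector belongs to it, since $\sum_j v_j = 0$ forces $\sum_{j \in R_i} v_j = -s_i$), and it is cut out by $d$ linear equations together with the box constraints. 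By a Carath\'eodory-type basic-feasible-solution argument, it therefore admits a vertex at which at most $d$ of the coordinates $\lambda_j$ lie strictly between $0$ and $1$, the rest being $0$ or $1$.

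The next vector in the permutation is produced by a pivoting step: one follows an edge of $P_i$ along which some positive coordinate $\lambda_\ell$ decreases to $0$, either directly forcing the choice $\pi(i+1) = \ell$, or visiting intermediate vertices at which the fractional support is updated. The crucial estimate is that, at every vertex, the partial sum $s_i$ differs from the ``integer'' part of the fractional state by only the at most $d$ fractional contributions $\lambda_j v_j$; since each vector has norm at most $1$, this bounds $\|s_i\|$ by a constant of the order of $d$ throughout the process.

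The main obstacle, in my view, is carrying out this pivoting procedure cleanly: one must check that it is well defined, that it terminates in finitely many elementary vertex transitions per stage, and that the invariant on $\|s_i\|$ is preserved as the reservoir is depleted. This is the technical heart of the Grinberg--Sevastyanov argument. Once it is in place, iterating through $i = 0, 1, \ldots, n - 1$ yields the desired permutation, and the convexification step from the first paragraph extends the conclusion to an arbitrary set $U$ with $0 \in U$.
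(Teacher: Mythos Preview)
The paper does not prove this theorem: it is quoted as a classical result of Steinitz, with a citation, and then immediately specialised to the Euclidean unit ball in $\RR^2$ and $\RR^3$. So there is no in-paper proof to compare your attempt against.

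Your sketch of the Grinberg--Sevastyanov argument for the normed case is the standard modern proof and is essentially correct: the polytopes $P_i$, the existence of a vertex with at most $d$ fractional coordinates, and the pivoting step are exactly the right ingredients, and they yield a constant $c$ of order $d$ for any norm on $\RR^d$. This is all the paper actually needs.

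However, your reduction step for a general $U$ is a genuine gap --- and in fact the theorem as \emph{literally} stated, for an arbitrary subset $U\subseteq\RR^d$ with $0\in U$, is false. Take $U=\{0,(1,0),(0,1),(-1,-1)\}\subseteq\RR^2$ and the zero-sum sequence $(1,0),(0,1),(-1,-1)$. For every ordering, the second partial sum is one of $(1,1)$, $(0,-1)$, $(-1,0)$, none of which lies on the three half-lines $\{(c,0):c>0\}$, $\{(0,c):c>0\}$, $\{(-c,-c):c>0\}$ making up $c\cdot U\setminus\{0\}$; hence no $c>0$ works. Your proposed passage from the symmetric convex hull $K=\mathrm{Conv}(U\cup(-U))$ back to a dilate of $U$ would require $K\subseteq\lambda\cdot U$ for some $\lambda$, and this is simply unavailable when $U$ is finite or otherwise thin. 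The paper's formulation is loose on this point; the result it actually invokes --- and the one your argument does establish --- is the Steinitz lemma for a convex body containing $0$ in its interior (equivalently, a norm ball), which suffices for every application made of it later in the paper.
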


In this statement, we used the notation $\alpha \cdot U$  for the $\alpha$-dilate of $U$, namely
$$
\alpha \cdot U = \{\alpha u: u \in U\}.
$$

We shall call {\em the Steinitz constant of $U$} the infimum of all constants 
$c \in \mathbb \RR^+$ that can be taken in Theorem \ref{steinitzthm}, and denote it by $C(U)$.
Bergstr\"om \cite{Berg} and Banaszczyk \cite{Banasz87, Banasz90} proved that 
in the case of the unit ball $B_1$ (the unit disk) of $\R^2$ 
\begin{equation}
\label{stein}
C(B_1) = \sqrt{5}/2.
\end{equation}

Let $v_i = u_i/m$ for $1 \leq i \leq k$. 
The sequence $(v_i)_{1 \leq i \leq k}$ has all its elements in $B_1$ and is zero-sum. 
Moreover, Steinitz' result proves that there exists a permutation $\pi$ of $\{ 1,\dots,  k \}$ such that 
$v_{\pi(1)} + \cdots + v_{\pi(i)}$ belongs to the disk of radius $\sqrt{5}/2$ for all $i \in \{1, \dots, k\}$.

All the sums $u_{\pi(1)} + \cdots + u_{\pi(i)}$ must be distinct since otherwise 
we would find two distinct indices $l < l'$ such that 
$$
u_{\pi(1)} + \cdots + u_{\pi(l)} = u_{\pi(1)} + \cdots + u_{\pi(l')},
$$
that is $u_{\pi(l'+1)} + \cdots + u_{\pi(l)}=0$,
which would give a non-empty zero-sum subsequence of $(u_i)_{1 \leq i \leq k}$. 
This would contradict the fact that $(u_i)_{1 \leq i \leq k}$ is minimal as a zero-sum sequence.

Thus, $k$ is at most the number of integral points of $\sqrt{5}/2 \cdot B_1= B_{\sqrt{5}/2}$, 
the disk of radius $\sqrt{5}/2$.
This implies 
$$
\DD (\Bcal^{(2)}_m) \leq \pi \left( \frac{\sqrt5 m}2 +2 \right)^2 \sim \frac{5 \pi}4 m^2.
$$
For this estimate, we use the fact that the number of integral points of the disk $B_{\sqrt{5}/2\ m}$ is bounded from above by the area of the disk $B_{\sqrt{5}/2\ m+2}$. 
Indeed, we may injectively map each integral point in $B_{\sqrt{5}/2\ m}$ to the upper-right square 
of which it is a vertex. All these squares are contained in the disk $B_{\sqrt{5}/2\ m+2}$. 

This last remark illustrates a general principle (for which we will not go into details in the sequel), that we now state:
\smallskip

\textbf{Principle of approximation of the volume by the number of integral points.}
{\em The volume of a sufficiently nice and regular set is well approximated by the number of integral points it contains.}
\smallskip

We refer to \cite{EGH, F}. This general principle, albeit dangerous sometimes \cite{MO}, is valid in particular for balls, boxes, convex polyhedra,
at least in the sense that if such a set $S$ is given, one has asymptotically as $t$ tends to $+ \infty$,
$$
\vol (t\cdot S ) \sim {\rm N}( t \cdot S),
$$
where $\vol$ and ${\rm N}$ stand for the volume of and the number of integral points contained in the considered set.

\section{Proof of Theorem \ref{theooptimal}}
\label{prtheopt}

We start our proof with a general old-school lemma of classical geometry, the advantage being that things are visible and 
appear at first glance. We will use a more formal approach later in this paper.

In general, for $d$ a positive integer, what we mean by {\em positively dependent vectors} is a set of vectors $w_1,\dots , w_{d+1} $ in $\R^d$ 
which span $\R^d$ and such that there are positive real numbers $a_1, \dots , a_{d+1}$ such that 
\begin{equation}
\label{posdep}
a_1w_1 + \cdots + a_{d+1} w_{d+1} = 0.
\end{equation} 
This is tantamount to asking that there is no closed half-space (half-$\R^d$) containing all vectors.

Notice that from $d+1$ positively dependent vectors $w_1,\dots , w_{d+1} $ in $\R^d$, any choice of $d$ of them forms a basis: 
otherwise there would be a way to select $d$ vectors spanning a $(d-k)$-dimensional subspace of $\R^d$ for some $k>0$.
By \eqref{posdep} all $d+1$ vectors would lie in this $(d-k)$-dimensional vector space, thus not spanning $\R^d$.

\medskip
\begin{figure}[h!]
\begin{center}
\begin{tikzpicture}[>=stealth,scale=0.55]
\foreach \point in {(0,0),(-4,3),(3,2),(-1,-5)} 
\fill \point circle (0.5pt);
\draw [->,thick] (0,0) -- (0,5) node [anchor=south] {$w_1$};
\draw [->,thick] (0,0) -- (-4,-2) node [anchor=north east] {$w_2$};
\draw [->,thick] (0,0) -- (3,-3) node [anchor=north west] {$w_3$};
\draw (0,5) -- (-4,3) node [anchor=south east] {$w_1+w_2$} -- (-4,-2) -- (-1,-5) node [anchor=north] {$w_2+w_3$} -- (3,-3) -- (3,2) node [anchor=west] {$w_1+w_3$} -- (0,5);
\end{tikzpicture}
\medskip
\end{center}
\caption{A representation of $H(w_1,w_2,w_3)$.}
\end{figure}
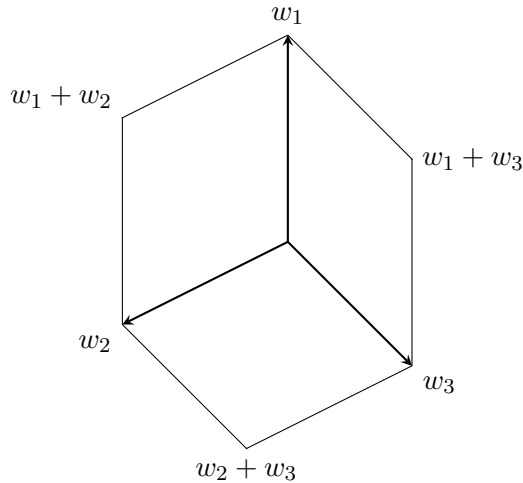
\medskip

In this section, we restrict ourselves to the case $d=2$ and consider three positively dependent vectors $w_1,w_2,w_3$ in $\R^2$.
This is tantamount to asking that there is no closed half-plane containing the three vectors.

In this situation, the convex hull of $w_1,w_1+w_2,w_2,w_2+w_3,w_3,w_1+w_3$ is an hexagon 
containing $0$ by \eqref{posdep}, as illustrated by Figure 1.

\begin{lemma} 
\label{wC}
Let $w_1,w_2,w_3$ be positively dependent vectors in $\R^2$ and let $H(w_1,w_2,w_3)$ be the convex hull of $w_1,w_1+w_2,w_2,w_2+w_3,w_3,w_1+w_3$.
Set the three parallelograms $C_1=\{\alpha w_2 + \beta w_3 : 0 \le \alpha,\beta \le 1\}, C_2=\{\alpha w_1 + \beta w_3 : 0 \le \alpha,\beta \le 1\}$ and 
$C_3=\{\alpha w_1 + \beta w_2 : 0 \le \alpha,\beta \le 1\}$. 
Then, 
$$
H (w_1,w_2,w_3)=\displaystyle\bigcup^3_{i=1} C_i = \displaystyle\bigcup^3_{i=1} (w_i+C_i).
$$
\end{lemma}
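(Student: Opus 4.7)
The plan is to identify the hexagon $H(w_1,w_2,w_3)$ with the zonotope
$$
Z := \{\alpha_1 w_1 + \alpha_2 w_2 + \alpha_3 w_3 : \alpha_1, \alpha_2, \alpha_3 \in [0,1]\},
$$
and then to exploit the positive dependence relation $a_1 w_1 + a_2 w_2 + a_3 w_3 = 0$ (with $a_i>0$) as a freedom to reparameterize any element of $Z$ so that a chosen coefficient equals either $0$ or $1$.

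First I would prove $H=Z$. The inclusion $H \subseteq Z$ is immediate, since each of the six vertices of $H$ is obtained from $Z$ by setting the three $\alpha_i$ to values in $\{0,1\}$, and $Z$ is convex (it is the image of $[0,1]^3$ under the linear map $(\alpha_1,\alpha_2,\alpha_3) \mapsto \sum \alpha_i w_i$). For $Z \subseteq H$, I would use the sliding step below to rewrite any $z = \sum \alpha_i w_i \in Z$ with some $\alpha_j = 0$; then $z$ lies in $C_j$, whose four vertices $0, w_k, w_\ell, w_k+w_\ell$ are in $H$ (note that $0 \in H$ as a convex combination of $w_1,w_2,w_3$ via the positive dependence relation), so $C_j \subseteq H$ by convexity.

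The key sliding operation is as follows. Substituting $(\alpha_1,\alpha_2,\alpha_3) \mapsto (\alpha_1+ta_1, \alpha_2+ta_2, \alpha_3 + ta_3)$ for any $t \in \mathbb{R}$ leaves $\sum \alpha_i w_i$ unchanged. Taking $t = -\min_i(\alpha_i/a_i) \le 0$ annihilates the coordinate attaining the minimum while keeping the others in $[0,1]$ (they only decrease, and stay nonnegative by minimality), which delivers $H = C_1 \cup C_2 \cup C_3$. Symmetrically, $t = \min_i((1-\alpha_i)/a_i) \ge 0$ pins one coordinate to $1$, exhibiting $z$ as an element of some $w_j + C_j$. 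Each $w_j + C_j$ is in turn contained in $H = Z$ since its vertices $w_j, w_j + w_k, w_j + w_\ell$ are vertices of $H$ and the fourth vertex $w_1 + w_2 + w_3$ lies in $Z$ via the choice $\alpha_1 = \alpha_2 = \alpha_3 = 1$. The main conceptual input is the identification $H = Z$; the rest reduces to verifying the elementary bounds on the slid coefficients, so I do not anticipate any serious obstacle.
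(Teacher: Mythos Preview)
Your proof is correct and takes a genuinely different route from the paper's.

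The paper argues the first identity $H=\bigcup C_i$ geometrically: after choosing coordinates so that $w_1$ is a basis vector and ordering the $w_i$'s by angle, each $C_i$ is identified as the intersection of $H$ with the angular sector $S(w_j,w_k)$ spanned by the other two vectors. The second identity $H=\bigcup(w_i+C_i)$ is then obtained indirectly, by applying the first identity to $-w_1,-w_2,-w_3$ and translating everything by $w_1+w_2+w_3$.

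Your approach instead identifies $H$ with the zonotope $Z=\{\sum\alpha_i w_i:\alpha_i\in[0,1]\}$ and uses the positive dependence relation as a one-parameter freedom to slide any representation along the line $(\alpha_1,\alpha_2,\alpha_3)+t(a_1,a_2,a_3)$ until some coordinate hits $0$ (giving the $C_i$ decomposition) or $1$ (giving the $w_i+C_i$ decomposition). This treats both identities on an equal footing, avoids any coordinate choice or angular ordering, and---most relevantly for this paper---generalizes verbatim to $\mathbb{R}^d$ with $d+1$ positively dependent vectors. The paper's own proof of the three-dimensional analogue (its Lemma~10) proceeds quite differently, via a lengthy case analysis showing that the positive cones $K_i$ cover $\mathbb{R}^3$ and then that $D_i=K_i\cap D$; your sliding argument would replace all of that in one stroke.
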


\begin{proof}
The situation is as in Figure 2.
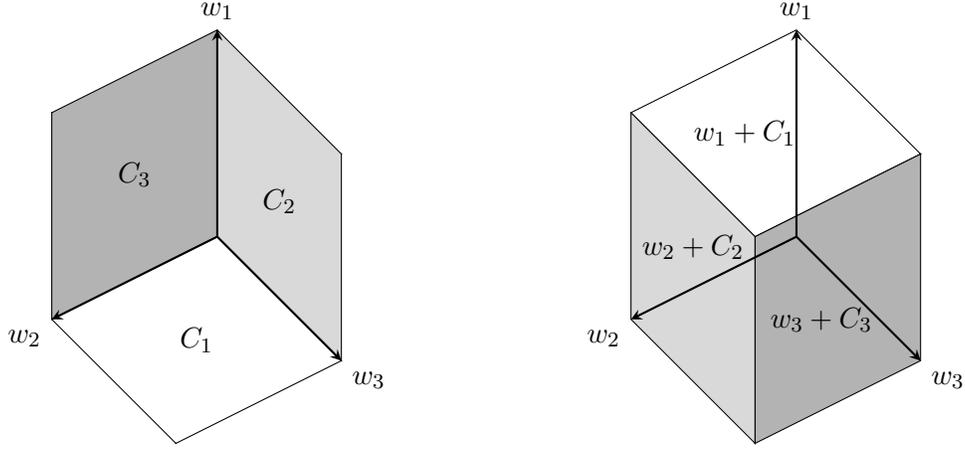
\begin{figure}[h!]
\begin{center}
\begin{tikzpicture}[>=stealth,scale=0.55]
\foreach \point in {(0,0),(-4,3),(3,2),(-1,-5)} 
\fill \point circle (0.5pt);
\draw[fill=black!30] (0,5) -- (-4,3) -- (-4,-2) -- (0,0) -- (0,5);
\draw[fill=black!15] (0,5) -- (3,2) -- (3,-3) -- (0,0) -- (0,5);
\draw (0,0) -- (-4,-2) -- (-1,-5) -- (3,-3) -- (0,0) ;
\draw [->,thick] (0,0) -- (0,5) node [anchor=south] {$w_1$};
\draw [->,thick] (0,0) -- (3,-3) node [anchor=north west] {$w_3$};
\draw [->,thick] (0,0) -- (-4,-2) node [anchor=north east] {$w_2$};
\draw (-0.5,-2.5) node {$C_1$};
\draw (1.5,0.85) node {$C_2$};
\draw (-2,1.5) node {$C_3$};
\foreach \point in {(14,0),(10,3),(17,2),(13,-5),(13,0)} 
\fill \point circle (0.5pt);
\draw (10,3)  -- (13,0) -- (17,2) -- (14,5) -- (10,3); 
\draw[fill=black!15] (10,3) -- (10,-2) -- (13,-5) -- (13,0) -- (10,3); 
\draw[fill=black!30] (13,0) -- (13,-5) -- (17,-3) -- (17,2) -- (13,0); 
\draw [->,thick] (14,0) -- (14,5) node [anchor=south] {$w_1$};
\draw [->,thick] (14,0) -- (10,-2) node [anchor=north east] {$w_2$};
\draw [->,thick] (14,0) -- (17,-3) node [anchor=north west] {$w_3$};
\draw (12.75,2.5) node {$w_1+C_1$};
\draw (11.5,-0.25) node {$w_2+C_2$};
\draw (14.6,-2) node {$w_3+C_3$};
\end{tikzpicture}
\medskip
\end{center}
\caption{Two decompositions of the hexagon $H(w_1, w_2, w_3)$.}
\end{figure}
The first assertion of the lemma is almost immediate. 
For if we complete $w_1$ by a vector $e_2$ such that $(w_1, e_2)$ becomes our canonical orthogonal basis, and up to renaming our vectors, 
we may assume that $w_2$ has an $e_2$-coordinate which is positive (otherwise, by \eqref{posdep}, it must be $w_3$), 
and $w_3$ a negative $e_2$-coordinate. 
By rotating counterclockwise from $w_1$, one thus meets the vertices in that order: 
$w_1,w_1+w_2,w_2,w_2+w_3,w_3,w_1+w_3$. 
Each of the $C_i$'s appears to be the intersection of $H$ with an angular sector, for instance 
$$
C_1 = H(w_1, w_2, w_3) \cap S(w_2, w_3)
$$
where $S(w_2, w_3)$ is the angular sector delimited by the origin and the two vectors $w_2$ and $w_3$.

Let us now consider the second assertion. 
We start by applying the first identity to the set of vectors $-w_1,-w_2,-w_3$ which are positively dependent, as well. 
We obtain
$$
H (- w_1, -w_2, -w_3)=\displaystyle\bigcup^3_{i=1} C'_i
$$
where $C'_1=\{-\alpha w_2 - \beta w_3 : 0 \le \alpha,\beta \le 1\}, C'_2=\{- \alpha w_1 - \beta w_3 : 0 \le \alpha,\beta \le 1\}$ and 
$C'_3=\{- \alpha w_1 - \beta w_2 : 0 \le \alpha,\beta \le 1\}$. We now translate this identity by the vector $w_1+w_2+w_3$. 
This gives
\begin{equation}
\label{wH}
(w_1+w_2+w_3) + H (- w_1, -w_2, -w_3)=\displaystyle\bigcup^3_{i=1} (w_1+w_2+w_3 +C'_i ).
\end{equation}
We consider the left-hand side first (the notation $\Conv$ in this chain of equalities is for the convex hull of a set of elements):
\begin{eqnarray*}
	&& (w_1+w_2+w_3) + H (- w_1, -w_2, -w_3) \\
	& = & (w_1+w_2+w_3) + \Conv (-w_1,-w_1-w_2, -w_2,-w_2-w_3,-w_3,-w_1-w_3) \\
						 		& = & \Conv (w_2+w_3, w_3, w_1+w_3, w_1, w_1+w_2, w_2) \\
								& = & H(w_1, w_2, w_3).
\end{eqnarray*}
Let us now deal with the $(w_1+w_2+w_3 +C'_i )$'s, by considering the case $i=1$: 
\begin{eqnarray*}
(w_1+w_2+w_3) + C'_1 & = & (w_1+w_2+w_3) + \{-\alpha w_2 - \beta w_3 : 0 \le \alpha,\beta \le 1\}\\
						 		& = & w_1 +  \{(1-\alpha) w_2 + (1- \beta) w_3 : 0 \le \alpha,\beta \le 1\}\\
								& = & w_1 +  \{\alpha w_2 + \beta w_3 : 0 \le \alpha,\beta \le 1\}\\
								& = & w_1 + C_1.
\end{eqnarray*}
Thus \eqref{wH} gives
$$
H(w_1, w_2, w_3) = \displaystyle\bigcup^3_{i=1} (w_i+C_i ),
$$
as announced.

\end{proof}

From this lemma, we deduce an important corollary of an algorithmic nature.

\begin{corollary}
\label{corosuiteH}
Let $n \ge 3$ and $u_1,\dots, u_n$ be a minimal zero-sum sequence  in $\mathbb{Z}^2$ 
having a support of size $3$, say $\{w_1,w_2,w_3\}$.
Then, $n$ is at most the number of integral points in $H(w_1,w_2,w_3)$.
\end{corollary}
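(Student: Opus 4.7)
Write $n_i \ge 1$ for the multiplicity of $w_i$ in $u_1,\dots,u_n$. Because the sequence is zero-sum, $n_1 w_1 + n_2 w_2 + n_3 w_3 = 0$ with every $n_i > 0$, so $w_1, w_2, w_3$ are positively dependent in $\R^2$, placing us in the setting of Lemma~\ref{wC}. The plan is to exhibit an ordering $u_{\pi(1)},\dots,u_{\pi(n)}$ of the sequence such that every partial sum $s_t := u_{\pi(1)} + \cdots + u_{\pi(t)}$ (for $0 \le t \le n$) lies in $H(w_1,w_2,w_3)$. Once this ordering is constructed, the minimality of the zero-sum sequence forces the partial sums $s_0, s_1, \ldots, s_{n-1}$ to be pairwise distinct (otherwise $s_{l'} - s_l = u_{\pi(l+1)} + \cdots + u_{\pi(l')}$ would be a proper non-empty zero-sum subsequence of $u_1,\dots,u_n$); since these are all integer points of $H(w_1,w_2,w_3)$, the desired inequality $n \le |\ZZ^2 \cap H(w_1,w_2,w_3)|$ follows.

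I build the ordering greedily. The first assertion of Lemma~\ref{wC}, combined with a short direct argument, yields the zonotope description
$$
H(w_1,w_2,w_3) = \bigl\{\beta_1 w_1 + \beta_2 w_2 + \beta_3 w_3 : \beta_1, \beta_2, \beta_3 \in [0,1]\bigr\}.
$$
Since $n_1 w_1 + n_2 w_2 + n_3 w_3 = 0$, any two representations of a single point as $\sum_i \beta_i w_i$ differ by $(\tau n_1, \tau n_2, \tau n_3)$ for some $\tau \in \R$, so a point $b_1 w_1 + b_2 w_2 + b_3 w_3$ lies in $H(w_1,w_2,w_3)$ exactly when some $\tau \in \R$ satisfies $b_i + \tau n_i \in [0,1]$ for every $i$. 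The greedy rule, applied to a current partial sum $s = b_1 w_1 + b_2 w_2 + b_3 w_3$ with integer $0 \le b_i \le n_i$, is the following: while $(b_1, b_2, b_3) \ne (n_1, n_2, n_3)$, append a copy of $w_k$ where $k$ is any index achieving the minimum of $b_i/n_i$. Such a $k$ automatically satisfies $b_k < n_k$: otherwise all $b_i \ge n_i$ and the sequence would already be exhausted.

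The core of the proof is to check that this step preserves the invariant $s \in H(w_1,w_2,w_3)$. A short computation on the admissible interval of $\tau$'s shows that, after the update $b_k \mapsto b_k + 1$, the new tighter upper constraint $\tau \le -b_k/n_k$ is compatible with the other constraints $\tau \ge -b_i/n_i$ (for $i \ne k$) precisely when $b_k/n_k \le b_i/n_i$ for every $i$, which is exactly how $k$ was chosen; the remaining constraints are either unchanged or loosened. Iterating yields the required permutation. This verification is the main obstacle: though elementary, it is the only step that makes essential use of the positive dependence of $w_1, w_2, w_3$ and of the precise geometry of $H(w_1,w_2,w_3)$.
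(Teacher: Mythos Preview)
Your proof is correct and follows the same overall strategy as the paper's: greedily order the sequence so that every partial sum lies in $H(w_1,w_2,w_3)$, then use minimality to conclude that these $n$ partial sums are distinct lattice points. The paper phrases the greedy step geometrically via Lemma~\ref{wC} (locate the current sum in some $C_{i_0}$, append $w_{i_0}$, land in $w_{i_0}+C_{i_0}\subseteq H$), whereas you use the equivalent zonotope description and track the admissible interval for the shift parameter $\tau$; your rule ``append $w_k$ with $b_k/n_k$ minimal'' picks the same index, since $s\in C_{i_0}$ is precisely the statement that $b_{i_0}/n_{i_0}$ is minimal. Your availability check ($b_k<n_k$, else all $b_i=n_i$) is a bit cleaner than the paper's cone argument. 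One point worth making explicit: the phrase ``the remaining constraints are either unchanged or loosened'' does not by itself guarantee that the new upper bound $\tau\le -b_k/n_k$ is compatible with the unchanged upper bounds $\tau\le(1-b_i)/n_i$; this is where the inductive hypothesis enters, since any previously feasible $\tau_0$ satisfies $-b_k/n_k\le\tau_0\le(1-b_i)/n_i$, so $\tau=-b_k/n_k$ indeed meets every constraint.
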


\begin{proof}
Since $\sum^n_{i=1} u_i=0$, the elements $w_1,w_2,w_3$ must be positively dependent. 
We shall prove by induction that there is a permutation $\sigma$ of $\{1,\dots, n\}$ such that 
$s_j =\sum^j_{i=1} u_{\sigma(i)} \in H(w_1,w_2,w_3)$ for every $j \in \{1,\dots,n\}$.

We choose $\sigma (1)=1$. Thus $s_1 =  u_{\sigma(1)}  = u_{1} \in H(w_1,w_2,w_3)$.

Suppose now that for some positive integer $k<n $, the indices $\sigma (1),\dots, \sigma (k)$ are already chosen so that 
$s_j \in H(w_1,w_2,w_3)$ for all $j \in \{1,\dots, k\}$. 
Since $H(w_1,w_2,w_3) = \displaystyle\bigcup^3_{i=1} C_i $, there is a value $i_0$ in $\{1,2,3\}$ such that $s_k \in C_{i_0}$. 
We choose $\sigma (k+1)\in \{1,\dots, n\} \setminus \{ \sigma (1),\dots, \sigma (k) \}$ such that $u_{\sigma (k+1)}=w_{i_0}$. 

Notice that there must be such a value for $\sigma (k+1)$ otherwise all the remaining values in the sequence $u_1,\dots, u_n$
(those with index in $\{1,\dots, n\} \setminus \{ \sigma (1),\dots, \sigma (k) \}$) 
are equal to one of the two other values $w_j$ or $w_{j'}$ (with $\{j,j'\} = \{1,2,3\} \setminus \{i_0\}$). 
Thus 
$$
0 = \sum^n_{i=1} u_i = \sum^k_{i=1} u_{\sigma (i)} + \sum_{i \in \{1,\dots, n\} \setminus \{ \sigma (1),\dots, \sigma (k) \}} u_i 
\quad \in \quad C_{i_0} + \big(  (\NNb \cdot w_j + \NNb \cdot w_{j'} ) \setminus \{ 0 \} \big),
$$
(the term $ \setminus \{ 0 \} $ being due to the fact that we suppose $k<n$), which is a contradiction.

Then, by Lemma \ref{wC},
$$
s_{k+1} = s_k +u_{\sigma (k+1)}=s_k+w_{i_0} \in w_{i_0}+C_{i_0} \subseteq \bigcup^3_{i=1} (w_i+C_i) = H(w_1,w_2,w_3),
$$
and the induction step is completed.

Since our sequence has no non-trivial zero-sum subsequence, 
it follows that $s_1,\dots,s_n$ are distinct elements of $H(w_1,w_2,w_3)$, which gives the desired result.
\end{proof}

Before applying this result to prove the upper bound of Theorem \ref{theooptimal}, we must take care of the case
when the support of our sequence consists of three collinear vectors. 
But in this case, up to a rotation, we are in the situation of a one-dimensional sequence in which case we know 
that the maximal length of a minimal zero-sum sequence is at most $2m-1$, a much smaller bound than the one we obtain thereafter
(which grows like $m^2$).

We now investigate the case where the support (of size 3) of our minimal zero-sum sequence spans $\R^2$. 
The three vectors composing the support of our sequence must be positively dependent in view of the fact that the sequence is zero-sum.
By Corollary \ref{corosuiteH}, the value of $\DD^{(3)} (\Bcal^{(2)}_m)$ is bounded from above by the maximal number of integral points in 
an hexagon of the form $H(w_1,w_2,w_3)$ with the $w_i$'s in $\Bcal^{(2)}_m$. Approximating this number of integral points 
by its area is valid, in view of what we called the Principle of approximation of the volume by the number of integral points
(see Section \ref{useofsteinitz}).

We now face the question of maximising the area of such an hexagon as $H(w_1,w_2,w_3)$ when 
$w_1,w_2,w_3 \in \Bcal^{(2)}_m$.
 
In order to investigate this problem, denote the angle between $w_1$ and $w_2$ 
by $\alpha \in ]0, \pi[$ and the one between $w_2$ and $w_3$ by $\beta \in ]0, \pi[$. 
The total area $S(\alpha, \beta )$ of the hexagon $H(w_1,w_2,w_3)$ is the sum of the areas of the three parallelograms 
denoted by $C_1, C_2$ and $C_3$ in our previous notation, which are easily estimated and gives a total area of at most
$$
m^2 \big( \sin \alpha + \sin \beta + \sin \left( 2 \pi - (\alpha + \beta)\right) \big) = m^2 \big( \sin \alpha + \sin \beta - \sin \left( \alpha + \beta\right) \big)
$$
since the angle between $w_3$ and $w_1$ is $2 \pi - (\alpha + \beta)$.
Maximising this two-variable function is an easy task which leads to $\alpha= \beta=2\pi/3$ 
and consequently $2 \pi - \alpha - \beta = 2\pi/3$ as well. 
We obtain for $w_1,w_2,w_3$ an equilateral triangle so that $H(w_1,w_2,w_3)$ is a regular hexagon of side $m$,
the area of which is well known to be $(3\sqrt{3}/2)\ m^2$.

All in all, this gives
$$
\DD^{(3)} (\Bcal^{(2)}_m) \lesssim \frac{3\sqrt{3}}{2}m^2.
$$
This proves Theorem \ref{theooptimal}, since we already proved in Section \ref{sectthmdisk} that this bound is asymptotically attained.

Let us give a geometrical interpretation of this result. 
A look at Figure 2 shows that our hexagon can be seen as the two-dimensional projection of a cube, 
its three visible faces being what we called $C_1$, $C_2$ and $C_3$ on the left part of Figure 2.
If we now draw the sets $w_i+C_i$, as in the right part of Figure 2, these three translates appear 
to be the hidden faces (each of which is parallel to a visible face). 
Seen in this way, our maximisation problem becomes the following one: 
what is the maximum area of the projection of a cube's visible part?
It turns out that this problem is already documented \cite{McM} and that the answer is: 
this area is maximised for the cube projected in the direction of a grand diagonal, 
which corresponds to $w_1+w_2+w_3=0$ and gives the regular hexagon we already encountered.

To finish, notice that the area of the appearing hexagon is equal to twice the one of the equilateral triangle, a remark that 
will be useful later on.

\section{Proof of Theorem \ref{thmd3}}
\label{sectionthmd3}

The main part of our work deals with the lower bound in the theorem.

For $m$ large enough, we define $n_m$, $c_m$, $p_m$, $q_m$ and $r_m$ five primes being maximal with respect to the following constraints 
$$
n_m \leq m, \quad c_m \leq \frac{m}{3},\quad p_m \leq \frac{2\sqrt 2}{3} m,\quad q_m \leq \frac{\sqrt 2}{3} m\quad  {\rm and}\quad 
r_m <\frac{\sqrt{3}\ p_m}{2}  \leq \frac{\sqrt 2}{\sqrt3} m.
$$
As in the proof given in Section \ref{sectthmdisk}, and due to the fact that the gap between consecutive primes is small enough \cite{BHP}, we have
$$
n_m \sim m,\quad c_m \sim \frac{m}{3},\quad p_m \sim \frac{2\sqrt 2}{3} m\quad 
q_m \sim \frac{\sqrt 2}{3} m\quad {\rm and}\quad r_m \sim \frac{\sqrt{2}}{\sqrt{3}} m.
$$

For the lower bound, we use the sequence $S$ consisting of  the vector
$
\begin{pmatrix}
0 \\
0 \\
n_m
\end{pmatrix}
$ repeated $(2r_m-1)(q_m +p_m)c_m$ times, 
$
\begin{pmatrix}
0\\
p_m\\
- c_m \\ 
\end{pmatrix}
$ 
repeated $(2r_m-1)q_m n_m$ times, 
$
\begin{pmatrix}
r_{m}\\
- q_m \\ 
 -  c_m \\ 
\end{pmatrix}
$ 
repeated $(r_m-1)p_m n_m$ times and
$
\begin{pmatrix}
-(r_{m}-1)\\
- q_m \\ 
 -  c_m \\ 
\end{pmatrix}
$
repeated $r_m p_m n_m$ times.

Due to our choice for $n_m, p_m, q_m$ and $r_m$, this sequence has all its elements 
in $\Bcal^{(3)}_m$. It is a zero-sum sequence since
\begin{eqnarray*}
(2r_m-1)(q_m +p_m)c_m
\begin{pmatrix}
0 \\
0 \\
n_m
\end{pmatrix}
+
(2r_m-1)q_m n_m
\begin{pmatrix}
0\\
p_m\\
-  c_m \\ 
\end{pmatrix}
+
(r_m-1)p_m n_m
\begin{pmatrix}
r_{m}\\
- q_m \\ 
 -  c_m \\ 
\end{pmatrix}
&&\\
+
r_m p_m n_m
\begin{pmatrix}
-(r_{m}-1)\\
- q_m\\ 
 -  c_m \\ 
\end{pmatrix} =&
\begin{pmatrix}
0 \\
0 \\
0
\end{pmatrix}.&
\end{eqnarray*}

It remains to show it is minimal. 
Assume 
$$
k_1
\begin{pmatrix}
0 \\
0 \\
n_m
\end{pmatrix}
+
k_2
\begin{pmatrix}
0\\
p_m\\
-  c_m \\ 
\end{pmatrix}
+
k_3
\begin{pmatrix}
r_{m}\\
- q_m\\ 
 -  c_m \\ 
\end{pmatrix}
+
k_4
\begin{pmatrix}
-(r_{m}-1)\\
- q_m\\ 
 -  c_m \\ 
\end{pmatrix}
=
\begin{pmatrix}
0 \\
0 \\
0
\end{pmatrix}
$$
with 
\begin{eqnarray}
\label{bound}
& k_1 \leq  (2r_m-1)(q_m +p_m)c_m,\quad  k_2 \leq  (2r_m-1)q_m n_m,\quad  k_3 \leq (r_m-1)p_m n_m   & \nonumber  \\
 & {\rm and}\quad  k_4 \leq r_m p_m n_m. &
\end{eqnarray}

As before, and by coprimality, the first coordinate shows that there exists an integer $k$ such that 
$$
k_3 = (r_m-1)k\quad {\rm and} \quad k_4 = r_m k.
$$
Considering the second coordinate shows that 
\begin{equation}
\label{k12x}
k_2 p_m  = (k_3+k_4) q_m = (2r_m-1) q_m k.
\end{equation}

We observe that $\gcd (p_m, (2r_m-1)q_m)=1$. 
Indeed when $m$ is large, $2r_m-1 <  \sqrt{3} p_m$ and $\sim \sqrt{3} p_m$ thus being  both striclty larger than $p_m$ and 
strictly smaller than $ 2 p_m$ ; on the other hand, in view of their respective asymptotic behaviour,  $q_m < p_m$.
We conclude by the primality of $p_m$.
With \eqref{k12x}, this shows that $(2r_m-1) q_m \mid k_2$ and $p_m \mid k$. 
Thus there exists an integer $l$ such that
$$
k_2 = (2r_m-1)q_m l,\quad k=p_m l.
$$

Projecting on the third coordinate now shows that
$$
k_1 n_m = (k_2+k_3+k_4) c_m = \left( (2r_m-1)q_m + (2r_m-1) p_m \right) c_m l = (2r_m-1)(q_m +p_m)c_m l.
$$

Again, we notice that the terms $n_m$ and $(2r_m-1)(q_m +p_m)c_m$ must be coprime. 
Indeed, $2r_m-1 \sim (2\sqrt2 / \sqrt 3) m$ and $n_m \sim m $ thus, when $m$ is large, $n_m < 2r_m-1  < 2n_m$. 
Since $n_m$ is a prime, these two integers have to be coprime. 
The same holds for $q_m+p_m \sim \sqrt 2 m$ and $c_m \sim m/3$.

Therefore, $(2r_m-1)(q_m +p_m)c_m \mid k_1$ and $n_m \mid l$, so
$$
k_1 \geq (2r_m-1)(q_m +p_m)c_m\quad {\rm and}\quad l \geq n_m,
$$
which implies that 
$$
k_2 \geq (2r_m-1)q_m n_m,\quad k_3 \geq (r_m-1)p_m n_m\quad {\rm and} \quad k_4 \geq r_m p_m n_m.
$$
This proves our assertion on the minimality of the sequence $S$.

We finally obtain
\begin{eqnarray*}
\DD^{(4)} (\Bcal^{(3)}_m) & \geq & \|S\| \\
				& = & (2r_m-1)(q_m +p_m)c_m + (2r_m-1)q_m n_m + (r_m-1)p_m n_m +r_m p_m n_m \\
			& =	& (2r_m-1)(q_m +p_m)c_m + (2r_m-1)(q_m +p_m )n_m \\
			& =	& (2r_m-1)(q_m +p_m)(c_m + n_m) \\
			& \sim & \frac{ 2\sqrt{2}}{\sqrt{3}}   \sqrt{2}   \frac{4}{3} m^3 \\
			& \sim& \frac{16}{3 \sqrt 3}m^3.		
\end{eqnarray*} 
This proves the desired lower bound.

As for the upper bound, 
it follows from the same approach as in Section \ref{useofsteinitz}, that is on Steinitz lemma.
In the present case we use the result of \cite{Banasz87} (see Remark 3), which gives that the Steinitz constant of the unit ball $B'_1$ 
of $\R^3$ is bounded from above by $d+1/d -1=7/3$ when $d=3$. 
This gives
$$
\DD (\Bcal^{(3)}_m) \lesssim \vol\ \big( B'_{7m/3} \big) = \frac43 \pi \left( \frac{7m}{3} \right)^3 = \frac{4 \cdot 7^3}{3^4} \pi m^3 = \frac{1372}{81} \pi m^3.
$$

\section{Optimality in dimension 3: Theorem \ref{thmd4}}
\label{optindim33}

The proof goes in an analogous way as in dimension 2, see Section \ref{prtheopt}.
However, things are less ``visible" here. Therefore, a more formal proof is definitely needed. 
The good news is that it will be adapted to a later generalisation. 
The following lemma is the central part of the proof.

\tdplotsetmaincoords{60}{85}

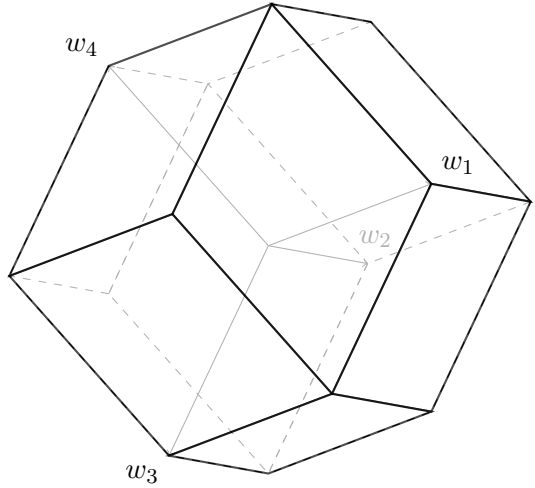
\begin{figure}[h!]
\begin{center}
\begin{tikzpicture}[tdplot_main_coords,scale=0.5]

	\coordinate (0) at (0,0,0);
	

	\coordinate (20) at (4,4,4);
	\coordinate (21) at (-4,3,-3);
	
	\coordinate (23) at (4,-3,-4);
	\coordinate (26) at (-3,-4,4);


	\coordinate (17) at (0,7,1);
	
	\coordinate (14) at (8,1,0);
	
	\coordinate (13) at (1,0,8);
	
	\coordinate (15) at (0,0,-7);
	
	\coordinate (16) at (-7,-1,1);
	
	\coordinate (18) at (1,-7,0);
	
	
	\coordinate (19) at (4,4,-3);
	
	\coordinate (22) at (-3,3,5);
	
	\coordinate (24) at (5,-3,4);
	
	\coordinate (25) at (-3,-4,-3);
	
 
 \draw [->] (0) -- (20);
\draw [->] (0) -- (21);
\draw [->] (0) -- (23);
\draw [->] (0) -- (26);
\draw (-3.5,2.45,-2.5) node [anchor=south west] {$w_2$};
 

	\draw[dashed] (25) -- (18) -- (23) -- (15);	
         \draw[dashed] (16) -- (25) -- (15) -- (21);
	\draw[dashed] (17) -- (22) -- (16) -- (21);
	\draw[dashed] (25) -- (16) -- (26) -- (18);
	\draw[dashed] (22) -- (13);
	\draw[dashed] (17) -- (21) -- (15) -- (19) -- cycle;





        \draw[thick,fill=black!0,opacity=0.7] (17) -- (20) -- (13) -- (22) -- cycle;
        \draw[thick,fill=black!0,opacity=0.7] (20) -- (14) -- (24) -- (13) -- cycle;
        \draw[thick,fill=black!0,opacity=0.7] (14) -- (23) -- (18) -- (24) -- cycle;
        \draw[thick,fill=black!0,opacity=0.7] (17) -- (20) -- (14) -- (19) -- cycle;
        \draw[thick,fill=black!0,opacity=0.7] (19) -- (14) -- (23) -- (15) -- cycle;
        \draw[thick,fill=black!0,opacity=0.7] (13) -- (24) -- (18) -- (26) -- cycle;
	
\draw (20) node [anchor=south west] {$w_1$};
\draw (23) node [anchor=north east] {$w_3$};
\draw (26) node [anchor=south east] {$w_4$};

\end{tikzpicture}
\end{center}
\caption{A representation of $D(w_1,w_2,w_3,w_4)$ as a translucent polyhedron.}
\end{figure}

\begin{lemma} 
\label{wC}
Let $w_1,w_2,w_3,w_4$ be positively dependent vectors in $\R^3$ and let 
\begin{eqnarray*}
D(w_1,w_2,w_3,w_4) & = & \Conv ( w_1,w_2, w_3, w_4,\,
w_1+w_2, w_1+w_3, w_1+w_4, w_2+w_3, w_2+w_4, w_3+w_4, \\
&& \hspace{2.9cm} w_1+w_2+w_3, w_1+w_2+w_4, w_1+w_3+w_4, w_2+w_3+w_4 ).
\end{eqnarray*}
Set the four parallelepipeds 
\begin{eqnarray*}
&D_1=\{\alpha w_2 + \beta w_3+\gamma w_4 : 0 \le \alpha,\beta,\gamma \le 1\}, & \\
&D_2=\{\alpha w_1 + \beta w_3 +\gamma w_4 : 0 \le \alpha,\beta, \gamma \le 1\}, & \\
&D_3=\{\alpha w_1 + \beta w_2+\gamma w_4 : 0 \le \alpha,\beta, \gamma \le 1\}, & \\
&D_4 =\{\alpha w_1 + \beta w_2+\gamma w_3 : 0 \le \alpha,\beta, \gamma \le 1\}.
\end{eqnarray*} 
Then, 
$$
D (w_1,w_2,w_3,w_4)=\displaystyle\bigcup^4_{i=1} D_i = \displaystyle\bigcup^4_{i=1} (w_i+D_i).
$$
\end{lemma}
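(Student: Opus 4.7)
The plan is to proceed in complete analogy with the two-dimensional version proved in the previous section, replacing the angular sectors $S(w_j,w_k)$ by the simplicial cones $K_i = \R_{\geq 0}\, w_j + \R_{\geq 0}\, w_k + \R_{\geq 0}\, w_l$ (where $\{i,j,k,l\}=\{1,2,3,4\}$), and the translation by $w_1+w_2+w_3$ by a translation by $w_1+w_2+w_3+w_4$.

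For the first identity $D=\bigcup_{i=1}^4 D_i$, the geometric key is that since $w_1,w_2,w_3,w_4$ are positively dependent in $\R^3$ and any three of them form a basis (as recalled in the text), the four cones $K_1,K_2,K_3,K_4$ together cover $\R^3$ and overlap only along two-dimensional faces. I would prove this by writing an arbitrary $x\in\R^3$ in, say, the basis $(w_2,w_3,w_4)$ and using the positive-dependence relation $a_1 w_1 + \cdots + a_4 w_4 = 0$ (with $a_i>0$) to cancel any negative coefficient by adding a suitable positive multiple of $w_1$. The inclusion $D_i \subseteq D$ is immediate from the definition of $D$ as a convex hull, so it remains to check that $D \cap K_i \subseteq D_i$: a point $x = \alpha w_j + \beta w_k + \gamma w_l$ of $D \cap K_i$ with $\alpha,\beta,\gamma \geq 0$ must in fact satisfy $\alpha,\beta,\gamma \leq 1$, which follows from the observation that $D$ lies in the intersection of the three half-spaces $\alpha \leq 1$, $\beta \leq 1$, $\gamma \leq 1$ (each bounded by a supporting plane of $D$ opposite to the vertex $w_j+w_k+w_l$).

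For the second identity, I apply the first to the vectors $-w_1,-w_2,-w_3,-w_4$, which are still positively dependent via the same coefficients $a_i$. This yields
$$
D(-w_1,-w_2,-w_3,-w_4) \,=\, \bigcup_{i=1}^4 D_i',
$$
where $D_i' = \{-\alpha w_j - \beta w_k - \gamma w_l : 0 \le \alpha,\beta,\gamma \le 1\}$. I then translate both sides by $w_1+w_2+w_3+w_4$: on the left, the translation sends the fourteen listed vertices of $D(-w_1,\dots,-w_4)$ bijectively onto the fourteen vertices of $D(w_1,\dots,w_4)$ (for instance $-w_1 \mapsto w_2+w_3+w_4$), so the left-hand side becomes $D(w_1,w_2,w_3,w_4)$; on the right, the change of variables $\alpha\mapsto 1-\alpha$, $\beta\mapsto 1-\beta$, $\gamma\mapsto 1-\gamma$ gives $(w_1+w_2+w_3+w_4) + D_i' = w_i + D_i$, exactly as in the planar proof.

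The main obstacle is the cone tiling claim $\R^3 = \bigcup_{i=1}^4 K_i$, together with the verification that $D$ has no vertex in the interior of $K_i$ beyond the eight sums $\varepsilon_j w_j + \varepsilon_k w_k + \varepsilon_l w_l$ with $\varepsilon\in\{0,1\}^3$. Both are direct consequences of the positive-dependence hypothesis combined with the linear independence of any three of the $w_i$'s, but they are the only points where genuinely geometric reasoning is required; once they are handled, the rest of the argument is formal manipulation modelled on the two-dimensional case.
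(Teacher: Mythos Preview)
Your proposal is correct and follows essentially the same route as the paper: define the four simplicial cones $K_i$, show they cover $\R^3$, prove $D_i = D \cap K_i$, and derive the second identity by applying the first to $-w_1,\dots,-w_4$ and translating by $w_1+w_2+w_3+w_4$. Your supporting-hyperplane formulation of ``$D\cap K_i\subseteq D_i$'' is exactly what the paper obtains by writing a point of $D$ as a barycentre of the fourteen listed points and eliminating $w_i$ via the positive-dependence relation; and your idea for the cone covering (add a multiple of the dependence relation to make all four coefficients nonnegative, with one vanishing) is in fact a cleaner substitute for the paper's somewhat lengthy five-case analysis of the signs of $\alpha,\beta,\gamma$.
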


Notice that the polyhedron $D(w_1,w_2,w_3,w_4)$ appearing here is a rhombic dodecahedron \cite{Coxeter} (well, an irregular one in general), as illustrated by Figure 3.

\begin{proof}
We define the four positive cones:
\begin{eqnarray*}
& K_1 = \{ \alpha w_2 + \beta w_3 + \gamma w_4 : \alpha, \beta, \gamma \ge 0\}, & \\
& K_2 = \{ \alpha w_1 + \beta w_3 + \gamma w_4 : \alpha, \beta, \gamma \ge 0\}, & \\
& K_3 = \{ \alpha w_1 + \beta w_2 + \gamma w_4 : \alpha, \beta, \gamma \ge 0\}, & \\
& K_4 = \{ \alpha w_1 + \beta w_2 + \gamma w_3 : \alpha, \beta, \gamma \ge 0\}.
\end{eqnarray*}
We first show that 
\begin{equation}
\label{cones}
\displaystyle\bigcup^4_{i=1} K_i = \R^3.
\end{equation}

By positive dependency, there are positive $x,y,z$ such that 
$$
w_4 = -  (xw_1+yw_2+zw_3)
$$
and, as explained at the beginning of Section \ref{prtheopt}, $(w_1, w_2, w_3$) has to be a basis of $\R^3$. 
Let $t \in \R^3$, say $t= \alpha w_1+ \beta w_2+ \gamma w_3$ for some real numbers 
$\alpha, \beta,\gamma$. We assume that $t \not\in \displaystyle\bigcup^3_{i=1} K_i$.
We compute that
$$
t = \frac{ \beta x - \alpha y}{x}  w_2 + \frac{ \gamma x- \alpha z}{x}  w_3 - \frac{\alpha}{x} w_4
$$
so that $t \not\in K_1$ implies, since $x >0$,
\begin{equation}
\label{ineq11}
\beta x - \alpha y  <0,\quad {\rm or}\quad \gamma x - \alpha z <0 ,\quad {\rm or}\quad \alpha >0.
\end{equation}
In the same way, $t \not\in K_2$ implies
\begin{equation}
\label{ineq22}
\alpha y - \beta x  <0,\quad {\rm or}\quad \gamma y- \beta z  <0 ,\quad {\rm or}\quad \beta > 0
\end{equation}
and  $t \not\in K_3$ implies
\begin{equation}
\label{ineq33}
\alpha z - \gamma x  <0,\quad {\rm or}\quad \beta z - \gamma y <0 ,\quad {\rm or}\quad  \gamma >0.
\end{equation}

We have different cases to consider: 

(i) Assume first that  $\beta x - \alpha y  <0$ and $\gamma x - \alpha z <0$ so that \eqref{ineq11} is satisfied.
By \eqref{ineq22}, one has
$$
\gamma y - \beta z <0 ,\quad {\rm or}\quad \beta > 0
 $$
and by \eqref{ineq33}, one has
$$
\beta z - \gamma y <0 ,\quad {\rm or}\quad  \gamma >0.
$$
Thus if $\gamma y - \beta z <0$, one must have $\gamma >0$. But in this case
$z\beta > \gamma y >0$ and thus $\beta >0$ by the positivity of $y$ and $z$.
 In the other case, $\gamma y - \beta z > 0$, one must have $\beta >0$ and then
$\gamma > \beta z/y>0$ as well. In the case where $\gamma y - \beta z =0$, then 
directly $\beta, \gamma > 0$. Thus in any case $\beta, \gamma > 0$ and using our starting 
assumption that $\beta x - \alpha y  <0$  and the positivity of $x$ and $y$, one gets $\alpha>0$ 
so that finally $t \in K_4$.

(ii) Assume now that  $\beta x - \alpha y  <0$ and $\gamma x - \alpha z > 0$. 
Equation \eqref{ineq11} is still satisfied. By \eqref{ineq22}, one has
$$
\gamma y - \beta z <0 ,\quad {\rm or}\quad \beta > 0.
$$
If $\beta >0$ then since $\alpha y > \beta x$, one has by positivity of $x$ and $y$, $\alpha>0$. 
And since $\gamma x - \alpha z > 0$, one has $\gamma x >  \alpha z$ thus by positivity of $x$ and $z$, $\gamma>0$, 
so that finally $t \in K_4$.

If $\gamma y - \beta z <0$, then we have the three inequalities
$$
\gamma y < \beta z,\quad \beta x <\alpha y ,\quad \alpha z < \gamma x.
$$
But then, due to the positivity of $x,y$ and $z$, 
$$
\gamma < \frac{z}{y} \beta < \frac{zy}{yx}  \alpha =  \frac{z}{x}  \alpha < \gamma,
$$
a contradiction.

(iii) The case where $\beta x - \alpha y  >0$ and $\gamma x - \alpha z < 0$ is symmetrical and left to the reader.

(iv) Finally if $\beta x - \alpha y  >0$ and $\gamma x - \alpha z >0$, one must have by  \eqref{ineq11} $\alpha >0$.
But then, as above, $\beta x - \alpha y  >0$ implies $\beta >0$ and $\gamma x - \alpha z >0$ implies $\gamma>0$. 
Finally, we obtain that $t \in K_4$ as well.

(v) It remains to check the cases where one of the inequalities is an equality, say $\beta x - \alpha y =0$ 
(the other cases are symmetrical).
But in this case  \eqref{ineq11} implies: $\gamma x - \alpha z <0$ or $\alpha >0$.

If $\gamma x - \alpha z <0$, then by \eqref{ineq33}, either $\beta z - \gamma y <0$ in which case 
$\beta >0$ by \eqref{ineq22} and then so do $\alpha = \beta x/y$ and $\gamma > \beta z/y$, by the positivity of $x,y$ and $z$; 
or $\gamma > 0$ but then so does $\alpha >\gamma x /z$ and then $\beta = \alpha y/x$. In both cases, $t \in K_4$.

And if $\gamma x - \alpha z \geq 0$ then one must have $\alpha >0$, which in turn gives, with $\beta x - \alpha y =0$, that $\beta >0$.
And since we are in the case $\gamma x \geq  \alpha z $ we get $\gamma \geq 0$.

In short, one has $t \in K_4$ in all five cases, and \eqref{cones} is proved.
\medskip

Now we prove that, for $i=1,2,3$ and 4, 
\begin{equation}
\label{DK}
D_i = K_i \cap D(w_1,w_2,w_3,w_4).
\end{equation}
Indeed $D_i$ is clearly contained in $K_i$ and contained in $D(w_1,w_2,w_3,w_4)$ in view of the fact 
that an element of the form 
$\alpha w_j+ \beta w_k + \gamma w_l$ with $\{j,k,l\} = \{1,2,3,4 \} \setminus \{i\}$ 
and, say (up to a reordering) $0 \leq \alpha \leq \beta \leq \gamma \leq 1$, 
can be rewritten as a combination with positive coefficients summing to 1 as 
$$
\alpha ( w_j+ w_k + w_l) + (\beta - \alpha) (w_k + w_l)+ (\gamma-\beta) w_l + (1- \gamma)\cdot 0
$$
and is therefore a barycenter of $w_j+ w_k + w_l$, $w_k + w_l$, $w_l$ and $0$. 
Since $w_1,w_2,w_3,w_4$ are positively dependent, $0$ itself belongs to $D(w_1,w_2,w_3,w_4)$ and thus, so does such a barycenter. 
This proves 
$$
D_i \subseteq K_i \cap D(w_1,w_2,w_3,w_4).
$$

Conversely, any element $t$ of $K_i$ can, by definition, be written as 
\begin{equation}
\label{ttt}
t=\alpha w_j+ \beta w_k + \gamma w_l
\end{equation}
with $\{j,k,l\} = \{1,2,3,4 \} \setminus \{i\}$ 
and $\alpha, \beta, \gamma \geq 0$. If $t$ is also an element of $D(w_1,w_2,w_3,w_4)$ 
it can be written as a barycenter of
\begin{eqnarray*}
&w_1,\ w_2,\ w_3,\ w_4,\quad w_1+w_2,\ w_1+w_3,\ w_1+w_4,\ w_2+w_3,\ w_2+w_4,\ w_3+w_4, &\\
&w_1+w_2+w_3,\quad w_1+w_2+w_4,\quad w_1+w_3+w_4,\quad w_2+w_3+w_4 &
\end{eqnarray*}
therefore it can be decomposed as a sum of the form 
\begin{equation}
\label{sumde4}
t = \alpha' w_j+ \beta' w_k + \gamma' w_l + \delta w_i
\end{equation}
with coefficients satisfying $0 \leq \alpha' , \beta' , \gamma' , \delta \leq 1$. But, by positive dependency,
$$
w_i = - \big( x_{i,j} w_j+ x_{i,k} w_k + x_{i,l} w_l)
$$
with $x_{i,j}, x_{i,k}, x_{i,l} >0$. Therefore, if we eliminate $w_i$ in \eqref{sumde4}, we get
\begin{eqnarray*}
t & = & \alpha' w_j+ \beta' w_k + \gamma' w_l - \delta  \big(  x_{i,j} w_j+ x_{i,k} w_k + x_{i,l} w_l  \big) \\
  & = & ( \alpha' - \delta  x_{i,j} ) w_j+ (\beta' - \delta x_{i,k})  w_k + (\gamma'  - \delta  x_{i,l}) w_l.
\end{eqnarray*}
Since the decomposition \eqref{ttt} is unique (on recalling that $(w_j, w_k, w_l)$ is a basis), one has
$$
\alpha = \alpha' - \delta  x_{i,j}\leq \alpha' \leq 1 ,\quad   \beta = \beta' - \delta x_{i,k} \leq \beta' \leq 1 \quad 
\gamma =\gamma'  - \delta  x_{i,l} \leq \gamma' \leq 1,
$$
and these three coefficients being also non-negative by assumption, one finally has $t \in D_i$. And 
\eqref{DK} is proved.
\medskip

\tdplotsetmaincoords{60}{85}

\bigskip
\begin{figure}[h!]
\begin{center}
\begin{tikzpicture}[tdplot_main_coords,scale=0.5]

	\coordinate (0) at (0,0,0);
	

	\coordinate (20) at (4,4,4);
	\coordinate (21) at (-4,3,-3);
	
	\coordinate (23) at (4,-3,-4);
	\coordinate (26) at (-3,-4,4);


	\coordinate (17) at (0,7,1);
	
	\coordinate (14) at (8,1,0);
	
	\coordinate (13) at (1,0,8);
	
	\coordinate (15) at (0,0,-7);
	
	\coordinate (16) at (-7,-1,1);
	
	\coordinate (18) at (1,-7,0);
	
	
	\coordinate (19) at (4,4,-3);
	
	\coordinate (22) at (-3,3,5);
	
	\coordinate (24) at (5,-3,4);
	
	\coordinate (25) at (-3,-4,-3);
 
 \draw [->,dashed] (0) -- (20);
\draw [->,dashed] (0) -- (21);
\draw [->,dashed] (0) -- (23);
\draw [->,dashed] (0) -- (26);
\draw (-3.5,2.75,-2.5) node [anchor=south west] {$w_2$};
 
	

	\draw[thick] (15) -- (25) -- (18) -- (23) -- cycle;	
	\draw[thick] (21) -- (16) -- (25) -- (15) -- cycle;
	\draw[thick] (17) -- (22) -- (16) -- (21) -- cycle;
	\draw[thick] (16) -- (26) -- (18) -- (25) -- cycle;
	\draw[thick] (22) -- (13) -- (26) -- (16) -- cycle;
	\draw[thick] (17) -- (21) -- (15) -- (19) -- cycle;





        \draw[thick,fill=black!15,opacity=1] (17) -- (20) -- (13) -- (22) -- cycle;
        \draw[thick,fill=black!0,opacity=1] (20) -- (14) -- (24) -- (13) -- cycle;
        \draw[thick,fill=black!0,opacity=1] (14) -- (23) -- (18) -- (24) -- cycle;
        \draw[thick,fill=black!30,opacity=1] (17) -- (20) -- (14) -- (19) -- cycle;
        \draw[thick,fill=black!30,opacity=1] (19) -- (14) -- (23) -- (15) -- cycle;
        \draw[thick,fill=black!0,opacity=1] (13) -- (24) -- (18) -- (26) -- cycle;
	
\draw (20) node [anchor=south west] {$w_1$};
\draw (23) node [anchor=north east] {$w_3$};
\draw (26) node [anchor=south east] {$w_4$};

\begin{scope}[shift={(2,18)}]
	
	\coordinate (0) at (0,0,0);
	

	\coordinate (20) at (4,4,4);
	\coordinate (21) at (-4,3,-3);
	
	\coordinate (23) at (4,-3,-4);
	\coordinate (26) at (-3,-4,4);
	

	\coordinate (17) at (0,7,1);
	
	\coordinate (14) at (8,1,0);
	
	\coordinate (13) at (1,0,8);
	
	\coordinate (15) at (0,0,-7);
	
	\coordinate (16) at (-7,-1,1);
	
	\coordinate (18) at (1,-7,0);
	
	
	\coordinate (19) at (4,4,-3);
	
	\coordinate (22) at (-3,3,5);
	
	\coordinate (24) at (5,-3,4);
	
	\coordinate (25) at (-3,-4,-3);
 
 \draw [->,dashed] (0) -- (20);
\draw [->,dashed] (0) -- (21);
\draw [->,dashed] (0) -- (23);
\draw [->,dashed] (0) -- (26);
\draw (-3.5,2.75,-2.5) node [anchor=south west] {$w_2$};

 

	\draw[thick] (15) -- (25) -- (18) -- (23) -- cycle;	
	\draw[thick] (21) -- (16) -- (25) -- (15) -- cycle;
	\draw[thick] (17) -- (22) -- (16) -- (21) -- cycle;
	\draw[thick] (16) -- (26) -- (18) -- (25) -- cycle;
	\draw[thick] (22) -- (13) -- (26) -- (16) -- cycle;
	\draw[thick] (17) -- (21) -- (15) -- (19) -- cycle;





        \draw[thick,fill=black!60,opacity=1] (17) -- (20) -- (13) -- (22) -- cycle;
         \draw[thick,fill=black!60,opacity=1] (20) -- (14) -- (24) -- (13) -- cycle;
        \draw[thick,fill=black!15,opacity=1] (14) -- (23) -- (18) -- (24) -- cycle;
        \draw[thick,fill=black!60,opacity=1] (17) -- (20) -- (14) -- (19) -- cycle;
        \draw[thick,fill=black!15,opacity=1] (19) -- (14) -- (23) -- (15) -- cycle;
        \draw[thick,fill=black!30,opacity=1] (13) -- (24) -- (18) -- (26) -- cycle;

\draw (20) node [anchor=south west] {$w_1$};
\draw (23) node [anchor=north east] {$w_3$};
\draw (26) node [anchor=south east] {$w_4$};

\end{scope}	
\end{tikzpicture}
\end{center}

\caption{Each of the two figures represents $D(w_1,w_2,w_3,w_4)$ as an opaque polyhedron. On the left, the three visible faces of $D_2$ are in white, the only visible face of $D_3$ in light grey and the two visible faces of $D_4$ in medium grey ($D_1$ is entirely hidden at the back of the solid). On the right, the three visible faces of $w_1+D_1$ are in dark grey, the only visible face of $w_4+D_4$ in medium grey and the two visible faces of $w_3+D_3$ in light grey ($w_2+D_2$ is entirely hidden at the back of the solid).}
\end{figure}
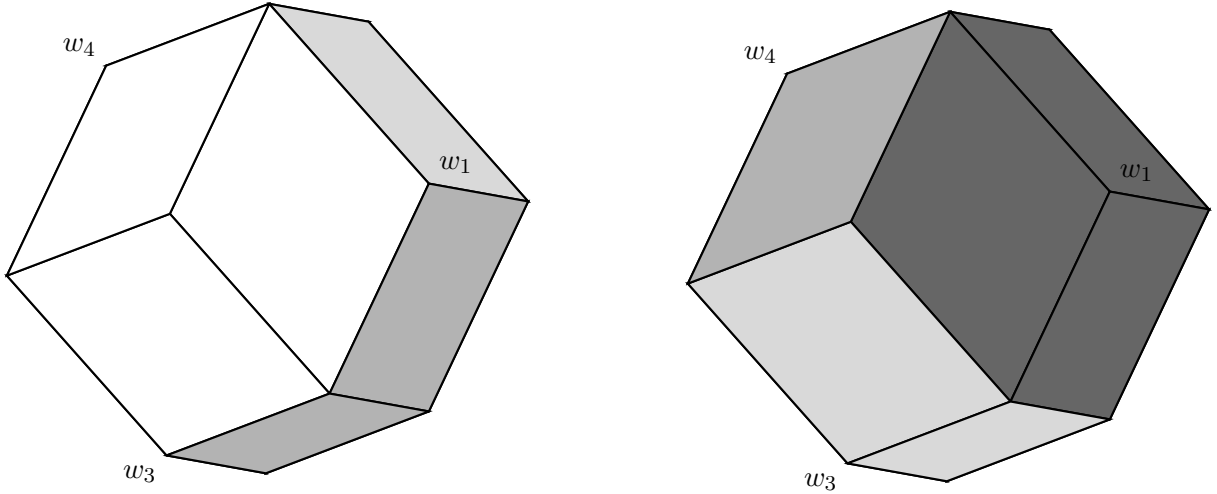
\bigskip

We are now ready to prove our lemma (see Figure 4). Indeed, by \eqref{cones} and \eqref{DK}, one has
\begin{eqnarray*}
D(w_1,w_2,w_3,w_4) & = & D(w_1,w_2,w_3,w_4) \bigcap \big( \bigcup^4_{i=1} K_i \big) \\
				 & = & \bigcup^4_{i=1} \big( K_i \cap D(w_1,w_2,w_3,w_4) \big)\\
				  & = & \bigcup^4_{i=1} D_i,
\end{eqnarray*}
and the first equality of the lemma is proved.

To show the second equality, we apply the first identity to the set of vectors $-w_1,-w_2,-w_3, -w_4$ which are positively dependent as well. 
We obtain
$$
D (- w_1, -w_2, -w_3, -w_4)=\displaystyle\bigcup^4_{i=1} D'_i
$$
where 
\begin{eqnarray*}
D'_1=\{-\alpha w_2 - \beta w_3 - \gamma w_4 : 0 \le \alpha,\beta, \gamma \le 1\}, &&
D'_2=\{- \alpha w_1 - \beta w_3 - \gamma w_4 : 0 \le \alpha,\beta, \gamma \le 1\}, \\
D'_3=\{- \alpha w_1 - \beta w_2- \gamma w_4 : 0 \le \alpha,\beta, \gamma \le 1\} & {\rm and} &
D'_4=\{- \alpha w_1 - \beta w_2- \gamma w_3 : 0 \le \alpha,\beta, \gamma \le 1\}.
\end{eqnarray*} 
We now translate this identity by the vector $w_1+w_2+w_3+w_4$. 
This gives
\begin{equation}
\label{wD}
(w_1+w_2+w_3+w_4) + D (- w_1, -w_2, -w_3, -w_4)=\displaystyle\bigcup^4_{i=1} (w_1+w_2+w_3 +w_4+ D'_i ).
\end{equation}
It turns out that the left-hand side is equal to $D(w_1, w_2, w_3,w_4)$, 
while the $(w_1+w_2+w_3 +w_4+D'_i )$'s can be computed to be $w_i + D_i$.
Thus \eqref{wD} gives
$$
D(w_1, w_2, w_3,w_4) = \displaystyle\bigcup^4_{i=1} (w_i+D_i ),
$$
as announced.

\end{proof}

The following corollary is, {\em mutatis mutandis}, what we have done in the two-dimensional case.

\begin{corollary}
\label{coroend3}
Let $n \ge 4$ and $u_1,\dots, u_n$ be a minimal zero-sum sequence  in $\mathbb{Z}^3$ 
having a support of size $4$, say $\{w_1,w_2,w_3,w_4\}$.
Then, $n$ is at most the number of integral points in $D(w_1,w_2,w_3,w_4)$.
\end{corollary}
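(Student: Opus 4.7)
The plan is to replicate the strategy used for Corollary \ref{corosuiteH} in dimension two, now leveraging Lemma \ref{wC} for the rhombic dodecahedron $D(w_1,w_2,w_3,w_4)$. The core idea is to construct a permutation $\sigma$ of $\{1,\dots,n\}$ so that every partial sum $s_j := \sum_{i=1}^j u_{\sigma(i)}$ lies inside $D(w_1,w_2,w_3,w_4)$. Since the $u_i$'s form a \emph{minimal} zero-sum sequence, all these partial sums are pairwise distinct (otherwise two equal partial sums would isolate a proper non-empty zero-sum subsequence), and hence $n$ is bounded above by the number of integral points in $D(w_1,w_2,w_3,w_4)$.

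First I would observe that the support yields positively dependent vectors: writing $c_i$ for the multiplicity of $w_i$ in the sequence, one has $\sum_{i=1}^4 c_i w_i = 0$ with $c_i > 0$, and we may assume the $w_i$'s span $\R^3$ (otherwise the sequence is effectively planar and one can reduce to the dimension-two case already treated). Lemma \ref{wC} then applies.

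For the inductive construction, pick $\sigma(1)$ arbitrarily so that $s_1 = u_{\sigma(1)} \in D(w_1,w_2,w_3,w_4)$. Assuming $\sigma(1),\dots,\sigma(k)$ have been chosen with each $s_j \in D$, the first identity of Lemma \ref{wC} furnishes an index $i_0 \in \{1,2,3,4\}$ with $s_k \in D_{i_0}$. I would then take $\sigma(k+1)$ to be any unused index satisfying $u_{\sigma(k+1)} = w_{i_0}$; the second identity of Lemma \ref{wC} immediately gives
$$
s_{k+1} = s_k + w_{i_0} \in w_{i_0} + D_{i_0} \subseteq \bigcup_{i=1}^4 (w_i + D_i) = D(w_1,w_2,w_3,w_4),
$$
closing the induction.

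The one point that needs verification is that an unused copy of $w_{i_0}$ is indeed available as long as $k<n$; this is where I expect the only real work. Suppose no such copy exists: then the sum $u_{\sigma(k+1)} + \cdots + u_{\sigma(n)}$ of the remaining terms equals $aw_j + bw_{j'} + cw_{j''}$, where $\{j,j',j''\} = \{1,2,3,4\} \setminus \{i_0\}$ and $a,b,c$ are non-negative integers not all zero. Writing $s_k = \alpha w_j + \beta w_{j'} + \gamma w_{j''}$ with $0 \le \alpha,\beta,\gamma \le 1$ (from $s_k \in D_{i_0}$) and using $\sum_{i=1}^n u_i = 0$, we obtain
$$
(a+\alpha)w_j + (b+\beta)w_{j'} + (c+\gamma)w_{j''} = 0.
$$
Since any three of the four positively dependent vectors $w_1,\ldots,w_4$ form a basis of $\R^3$ (as noted at the start of Section \ref{prtheopt}), each coefficient must vanish, forcing $a=b=c=0$ — a contradiction. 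The induction thus carries through and the bound follows.
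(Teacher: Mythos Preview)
Your proof is correct and follows essentially the same approach as the paper's: build the permutation inductively using the decomposition $D=\bigcup D_i$, push the partial sum via $s_{k+1}\in w_{i_0}+D_{i_0}\subseteq D$ from the second identity of Lemma~\ref{wC}, and use minimality to conclude the partial sums are distinct. Your verification that an unused copy of $w_{i_0}$ exists is the same argument as the paper's, just made more explicit by writing out the coefficients and invoking the linear independence of any three of the four vectors.
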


\begin{proof}

Since $\sum^n_{i=1} u_i=0$, the vectors $w_1,w_2,w_3,w_4$ are positively dependent. 
We shall prove by induction that there is a permutation  $\sigma$ of $\{1,\dots, n\}$ such that 
$s_j=\sum^j_{i=1} u_{\sigma(i)} \in D(w_1,w_2,w_3,w_4)$ for every $j \in \{1,\dots,n\}$.

We choose $\sigma (1)=1$.

Suppose now that for some positive integer $k<n $, the indices $\sigma (1),\dots, \sigma (k)$ are already chosen 
so that $s_j \in D(w_1,w_2,w_3,w_4)$ for all $j \in \{1,\dots, k\}$. 
Since $D(w_1,w_2,w_3,w_4) = \displaystyle\bigcup^4_{i=1} D_i $, there is a value $i_0$ in $\{1,2,3,4\}$ such that $s_k \in D_{i_0}$. 
We choose $\sigma (k+1)\in \{1,\dots, n\} \setminus \{ \sigma (1),\dots, \sigma (k) \}$ such that $u_{\sigma (k+1)}=w_{i_0}$. 

Notice that there must be such a value for $\sigma (k+1)$ otherwise all the remaining values in the sequence $u_1,\dots, u_n$ 
(those with index in $\{1,\dots, n\} \setminus \{ \sigma (1),\dots, \sigma (k) \}$) 
are equal to one of the three other values $w_j, w_{j'}$ or $w_{j''}$ (with $\{j,j',j''\} = \{1,2,3,4\} \setminus \{i_0\}$). 
Thus 
$$
0 = \sum^n_{i=1} u_i = 
\sum^k_{i=1} u_{\sigma (i)} + \sum_{i \in \{1,\dots, n\} \setminus \{ \sigma (1),\dots, \sigma (k) \}} u_i \quad \in \quad 
D_{i_0} + \big ( (\NNb \cdot w_j + \NNb \cdot w_{j'}+ \NNb \cdot w_{j''}) \setminus \{ 0 \} \big),
$$
which is a contradiction.

Then, by Lemma \ref{wC},
$$
s_{k+1} = s_k +u_{\sigma (k+1)}=s_k+w_{i_0} \in w_{i_0}+D_{i_0} \subseteq \bigcup^4_{i=1} (w_i+D_i) = D(w_1,w_2,w_3,w_4),
$$
and the induction step is completed.

Since our sequence has no non-trivial zero-sum subsequence, it follows that $s_1,\dots,s_n$ are distinct elements 
of $D(w_1,w_2,w_3,w_4)$, which gives the desired result.
\end{proof}

We now proceed with the very proof of Theorem \ref{thmd4}. 
As in Section \ref{prtheopt}, the case where the support of our sequence consists of four coplanar vectors leads back to 
the two-dimensional case where we already derived a bound being quadratic in $m$. 
This will be shown to be much smaller than the bound that we obtain thereafter (growing in $m^3$).

We now investigate the case where the support (of size 4) of our minimal zero-sum sequence spans $\R^3$. 
In view of the fact that the sequence is zero-sum, the four vectors of the support of our sequence must be positively dependent.
By Corollary \ref{coroend3}, the value of $\DD^{(4)} (\Bcal^{(3)}_m)$ is bounded from above by the maximal number of integral points in 
a polyhedron of the form $D(w_1,w_2,w_3, w_4)$ with the $w_i$'s in $\Bcal^{(3)}_m$. 
Approximating this number of integral points by its volume is valid, in view of what we called 
the Principle of approximation of the volume by the number of integral points (see Section \ref{useofsteinitz}).

It remains to show that the volume of the polyhedron 
\begin{eqnarray*}
D(w_1,w_2,w_3,w_4) & = & \Conv ( w_1,w_2, w_3, w_4,\,
w_1+w_2, w_1+w_3, w_1+w_4, w_2+w_3, w_2+w_4, w_3+w_4, \\
&& \hspace{2.8cm} w_1+w_2+w_3, w_1+w_2+w_4, w_1+w_3+w_4, w_2+w_3+w_4 )
\end{eqnarray*}
is maximised for $w_1,w_2,w_3, w_4$ forming a regular tetrahedron, in which case the volume is $(16/3 \sqrt 3) m^3$. 

This last fact may seem classical or simple. However, we have not been able to find a reference for it.
Although not difficult to establish in principle -- and not surprising at all -- this proof is technically lengthy and 
has to be done carefully (a five-variable function enters the picture). 
For the sake of completeness, we sketch this computation in an appendix at the end of the paper 
(see Section \ref{append}).

\section{Investigating sequences with support of size $d+1$ in dimension $d$: Conjecture \ref{conj78}}
\label{sec88}

In this section, we would like to give some support to Conjecture \ref{conj78} and explain that an important part 
of it is more than conjectural.

In dimension $d=2$ or 3, we already observed that the longest minimal zero-sum sequence having a support of size $d+1$ 
is obtained for a support the elements of which are (almost) on the sphere of radius $m$ 
and almost equidistant from one another, that is, as regularly distributed on the sphere as possible. 
Then, the length of such a minimal zero-sum sequence is 
asymptotically equal to the area of the hexagon of side $m$ in dimension 2 and to the volume of the rhombic dodecahedron of side $m$ in dimension 3.

Our conjecture is that the same phenomenon should happen in higher dimensions. 
Indeed, the two lemmas of the previous section can be adapted to higher dimensions. 
The geometric one can be obtained in an analogous -- but technically more complicated -- way and reads as follows.

\begin{lemma} 
\label{wCCC}
Let $d \geq 4$ be an integer and $w_1,\dots , w_{d+1}$ be positively dependent vectors in $\R^d$and let $L(w_1,\dots , w_{d+1})$ 
be the convex hull of all the (non-empty) $(2^{d+1} -2)$ sums of 1 to $d$ vectors among the $w_i$'s, namely
\begin{eqnarray*}
 L(w_1,\dots , w_{d+1})& = & \Conv ( w_1,\dots, w_{d+1}, \, \dots,
w_1+w_2, w_1+w_3, \dots, w_d+w_{d+1}, \dots \\
&& \hspace{2.5cm}  \dots, w_1+ w_2 +\cdots + w_{d-1} +w_{d+1},\ w_1+ w_2 +\cdots +w_{d-1} +w_{d}).
\end{eqnarray*}
Set for $i=1,\dots, d+1$ the $d+1$ hyperparallelepipeds 
\begin{eqnarray*}
L_{i} &=  & \{\alpha_1 w_1 + \alpha_2 w_2 + \cdots + \alpha_{i-1} w_{i-1} + \alpha_{i+1} w_{i+1}+ \cdots +\alpha_{d+1} w_{d+1}\ :\\
	& & \hspace{4cm} 0 \le \alpha_1, \alpha_2, \dots , \alpha_{i-1}, \alpha_{i+1}, \dots, \alpha_{d+1}\le 1\}
\end{eqnarray*}

Then, 
$$
L(w_1,\dots , w_{d+1})=\displaystyle\bigcup^{d+1}_{i=1} L_i = \displaystyle\bigcup^{d+1}_{i=1} (w_i+L_i).
$$
\end{lemma}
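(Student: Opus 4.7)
The plan is to follow the same structure as the proof of the preceding $d=3$ Lemma, but to replace the case-by-case analysis of the cone decomposition by a cleaner uniform argument. Write the positive dependency relation as $\sum_{i=1}^{d+1} x_i w_i = 0$ with $x_i > 0$, and recall that any $d$ of the $w_i$'s form a basis of $\R^d$ (as already observed at the beginning of Section \ref{prtheopt}). For each $i$, set
$$K_i = \Big\{\sum_{j \neq i} \alpha_j w_j : \alpha_j \geq 0 \text{ for all } j \neq i\Big\}.$$
The strategy is first to prove $\R^d = \bigcup_{i=1}^{d+1} K_i$, then to prove $L_i = K_i \cap L(w_1, \ldots, w_{d+1})$ for each $i$, combine the two to obtain the first equality, and finally derive the second equality by symmetry and translation exactly as in the $d=3$ case.

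The main obstacle is the first step: the $d=3$ argument handled it through a five-case analysis that becomes completely unwieldy as $d$ grows. A uniform substitute is the following. Given $t \in \R^d$, fix any representation $t = \sum_{j=1}^{d+1} \beta_j w_j$ (for instance by setting $\beta_{d+1} = 0$ and expanding in the basis $(w_1, \ldots, w_d)$). Using the dependency, for every $\lambda \in \R$ we have $t = \sum_{j=1}^{d+1} (\beta_j + \lambda x_j) w_j$. Choose $\lambda = \max_{1 \leq j \leq d+1} (-\beta_j/x_j)$ and let $i$ be an index achieving this maximum; then $\beta_i + \lambda x_i = 0$ while all other coefficients $\beta_j + \lambda x_j$ are nonnegative, so $t \in K_i$.

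For the identity $L_i = K_i \cap L(w_1, \ldots, w_{d+1})$, the inclusion $L_i \subseteq K_i$ is obvious, and $L_i \subseteq L(w_1, \ldots, w_{d+1})$ follows by sorting the coefficients of a point of $L_i$ in decreasing order $\alpha_{j_1} \geq \cdots \geq \alpha_{j_d}$ and writing it as the convex combination
$$\alpha_{j_d}(w_{j_1} + \cdots + w_{j_d}) + \sum_{k=1}^{d-1}(\alpha_{j_k} - \alpha_{j_{k+1}})(w_{j_1}+\cdots+w_{j_k}) + (1 - \alpha_{j_1}) \cdot 0,$$
where $0 \in L(w_1, \ldots, w_{d+1})$ because the normalized positive dependency expresses $0$ as a convex combination of the $w_j$'s. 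Conversely, any $t \in K_i \cap L(w_1, \ldots, w_{d+1})$ has a unique basis expansion $t = \sum_{j \neq i}\alpha_j w_j$ with $\alpha_j \geq 0$, and, as a convex combination of subset sums, admits another expression $t = \alpha'_i w_i + \sum_{j \neq i}\alpha'_j w_j$ with $0 \leq \alpha'_j \leq 1$ for all $j$; eliminating $w_i = -\sum_{j \neq i}(x_j/x_i) w_j$ and invoking uniqueness of coordinates in the basis $(w_j)_{j \neq i}$ yields $\alpha_j = \alpha'_j - \alpha'_i(x_j/x_i) \leq \alpha'_j \leq 1$, hence $t \in L_i$.

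Combining these steps gives $L(w_1, \ldots, w_{d+1}) = \bigcup_i L_i$. For the second equality, apply this first identity to the (still positively dependent) family $-w_1, \ldots, -w_{d+1}$, and then translate by $w_1 + \cdots + w_{d+1}$. A non-empty proper subsum of the $-w_j$'s has the form $-\sum_{j \in J} w_j$ for a non-empty proper subset $J \subseteq \{1,\ldots,d+1\}$; translation sends it to $\sum_{j \notin J} w_j$, which is again a non-empty proper subsum of the $w_j$'s, so the translated polytope is exactly $L(w_1, \ldots, w_{d+1})$. A direct computation shows that the translated copy of the $i$-th hyperparallelepiped associated with the $-w_j$'s is precisely $w_i + L_i$, yielding the desired second equality and completing the proof.
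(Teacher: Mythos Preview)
Your proof is correct and follows the same architecture that the paper uses in the $d=3$ case (and merely gestures at for $d\ge 4$): define the cones $K_i$, show they cover $\R^d$, prove $L_i = K_i \cap L(w_1,\dots,w_{d+1})$, and then obtain the second equality by applying the first to $-w_1,\dots,-w_{d+1}$ and translating by $\sum_j w_j$.

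The one place where you genuinely diverge is the covering step $\R^d = \bigcup_i K_i$. In Section~\ref{optindim33} the paper handles this for $d=3$ by fixing the basis $(w_1,w_2,w_3)$, expressing $w_4$ in it, and then running through a five-case sign analysis of the coordinates; it explicitly warns that the $d\ge 4$ version would be ``technically more complicated''. Your argument sidesteps this entirely: writing $t=\sum_j\beta_j w_j$ and sliding along the dependency line via $\lambda\mapsto\sum_j(\beta_j+\lambda x_j)w_j$, then choosing $\lambda=\max_j(-\beta_j/x_j)$, is a clean, dimension-free replacement. It is strictly simpler than what the paper does even at $d=3$, and it is what makes the lemma tractable for arbitrary $d$. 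The remaining steps (the convex-combination rewriting for $L_i\subseteq L$, the elimination of $w_i$ for $K_i\cap L\subseteq L_i$, and the translation argument) are the natural $d$-variable extensions of the paper's $d=3$ computations and go through without issue.
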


From it, one can deduce, as in lower dimensions, an  algorithmic lemma.

\begin{corollary}
Let $d \geq 4$ be an integer, $n \ge d+1$ and $u_1,\dots, u_n$ be a minimal zero-sum sequence  in $\mathbb{Z}^{d}$ 
having a support of size $d+1$, say $\{w_1,w_2,\dots , w_{d+1}\}$.
Then, $n$ is at most the number of points with integer coordinates in $L(w_1,\dots , w_{d+1})$.
\end{corollary}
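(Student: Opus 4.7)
The plan is to imitate closely the proofs of Corollary \ref{corosuiteH} (dimension~2) and Corollary \ref{coroend3} (dimension~3). The key observation is that Lemma \ref{wCCC} provides, in dimension $d$, the two set-theoretic identities needed to run the inductive argument: the covering $L(w_1,\dots,w_{d+1}) = \bigcup_{i=1}^{d+1} L_i$ allows us to locate each partial sum inside one of the hyperparallelepipeds $L_i$, while the alternative decomposition $L(w_1,\dots,w_{d+1}) = \bigcup_{i=1}^{d+1} (w_i + L_i)$ guarantees that, after adding a copy of $w_{i_0}$, the new partial sum still lies in $L(w_1,\dots,w_{d+1})$.

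First I would observe that since $\sum_{i=1}^n u_i = 0$, the vectors $w_1,\dots,w_{d+1}$ are positively dependent (and in particular span $\R^d$, as recalled at the beginning of Section \ref{prtheopt}). I would then build a permutation $\sigma$ of $\{1,\dots,n\}$ by induction. Setting $\sigma(1) = 1$, at each stage $k < n$ I pick $i_0 \in \{1,\dots,d+1\}$ with $s_k = \sum_{i=1}^k u_{\sigma(i)} \in L_{i_0}$ (using the first identity of Lemma \ref{wCCC}), and then select $\sigma(k+1)$ among the unused indices so that $u_{\sigma(k+1)} = w_{i_0}$. Lemma \ref{wCCC} then yields
\[
s_{k+1} = s_k + w_{i_0} \in w_{i_0} + L_{i_0} \subseteq \bigcup_{i=1}^{d+1} (w_i + L_i) = L(w_1,\dots,w_{d+1}),
\]
closing the induction.

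The only delicate point is showing that such a value for $\sigma(k+1)$ always exists. Suppose for contradiction that none of the unused $u_j$'s equals $w_{i_0}$; since $k<n$, at least one term remains. Then
\[
0 = \sum_{i=1}^{n} u_i = s_k + \sum_{j \notin \sigma(\{1,\dots,k\})} u_j \in L_{i_0} + \Bigl( \Bigl( \sum_{j \ne i_0} \NNb \cdot w_j \Bigr) \setminus \{0\} \Bigr).
\]
This is impossible: $L_{i_0}$ and the second summand both lie in the non-negative cone generated by $(w_j)_{j \ne i_0}$, a family which is a basis of $\R^d$ by the positive dependence of $w_1,\dots,w_{d+1}$, so by uniqueness of coordinates all involved coefficients must vanish, contradicting the fact that the second summand avoids the origin.

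Finally, since our sequence has no non-empty proper zero-sum subsequence, the partial sums $s_1,\dots,s_n$ are pairwise distinct; they all lie in $\mathbb{Z}^d \cap L(w_1,\dots,w_{d+1})$, which yields the claimed upper bound on $n$. The substantive geometric work is entirely carried by Lemma \ref{wCCC}; the present argument is a straightforward transcription of the lower-dimensional prototypes, and I do not anticipate any genuinely new obstacle beyond writing out carefully the non-existence argument for $\sigma(k+1)$ in arbitrary dimension.
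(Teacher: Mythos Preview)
Your proposal is correct and follows exactly the route the paper intends: the paper does not give an explicit proof of this corollary but simply says ``From it, one can deduce, as in lower dimensions, an algorithmic lemma,'' referring back to the proofs of Corollaries~\ref{corosuiteH} and~\ref{coroend3}, which is precisely what you transcribe. Your added justification of the contradiction step (via the fact that $(w_j)_{j\neq i_0}$ is a basis, so coordinates are unique in the non-negative cone) is a welcome clarification that the paper leaves implicit in the lower-dimensional cases.
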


Having this at hand, and approximating the number of integral points of $L(w_1,\dots , w_{d+1})$ using the Principle 
of approximation of the volume by the number of integral points, we are led to compute the volume of such an 
$L(w_1,\dots , w_{d+1})$ where the $w_i$'s are the elements of the support of the sequence, 
and then maximise it. 
We conjecture that the $d+1$ vectors $w_i$'s leading to the maximum volume
will be obtained as in lower dimensions that is, as the vertices of a regular polytope on $d+1$ points regularly spaced 
on the unit-sphere of $\RR^d$ (up to an homothety), that is with a given fixed distance from one another. 
This would generalise in higher dimensions the constructions leading to the equilateral triangle in dimension 2 and 
the tetrahedron in dimension 3.
For instance, in dimension $4$, we would obtain a pentachoron.

Then, the idea is that the longest minimal zero-sum sequence with support of size $d+1$ will 
be obtained by perturbing slightly these $d+1$ points so that they become integral  and 
then move them again so that they have the coprimality conditions which ensures that our 
sequence is a minimal zero-sum sequence.

This being said, in order to justify numerically our conjecture, we have to find how to place 
$d+1$ points regularly spaced on the unit-sphere of $\RR^d$ 
and then compute the volume of the polytope generated by these points.

Define
$$
\sigma_1^{(1)} = (-1)\quad {\rm and}\quad \sigma_2^{(1)} = (1).
$$
They are one-dimensional points on the unit-sphere $\{-1, 1 \}$ of $\RR$ being ``optimally" far from each other. 
The distance between these two points is 2. The length of the segment they define, namely $[-1,1]$, is $V_1=2$.

Define
$$
\sigma_1^{(2)} =
\begin{pmatrix}
-1/2 \\
-\sqrt{3}/2
\end{pmatrix},\quad
\sigma_2^{(2)} =
\begin{pmatrix}
-1/2 \\
\sqrt{3}/2
\end{pmatrix},\quad
\sigma_3^{(2)} =
\begin{pmatrix}
1 \\
0
\end{pmatrix}.
$$
These are two-dimensional points on the unit-sphere (the unit circle) of $\RR^2$ that are ``optimally" far from each other and regularly spaced. 
The distance between any two of these three points is $\sqrt{3}$.
These are the vertices of an equilateral triangle. The area of the triangle defined by these three points is $3 \sqrt{3}/4$.

In dimension 3, we obtain the four vectors: 
$$
\sigma_1^{(3)} =
\begin{pmatrix}
-1/3 \\
-\sqrt{2}/3\\
-\sqrt{2}/\sqrt{3}
\end{pmatrix},\quad
\sigma_2^{(3)} =
\begin{pmatrix}
-1/3 \\
-\sqrt{2}/3\\
\sqrt{2}/\sqrt{3}
\end{pmatrix},\quad
\sigma_3^{(3)} =
\begin{pmatrix}
-1/3\\
2 \sqrt{2}/3\\
0
\end{pmatrix},\quad
\sigma_4^{(3)} =
\begin{pmatrix}
1\\
0\\
0
\end{pmatrix}.
$$
These are the vertices of a regular tetrahedron. The distance between any two of these four points is $2 \sqrt{2}/\sqrt{3}$. 
Up to a rotation, these two last sets of points coincide with the supports of the sequences we used above, 
in our two- and three-dimensional constructions.

In general, we can always find $d+1$ points regularly spaced on the unit-sphere of $\RR^d$.
To construct such distributions of points, we may proceed by induction. 
Assuming that $(\sigma_i^{(d-1)})_{1 \leq i \leq d}$ is known, we define a finite sequence of $d$-dimensional vectors 
as follows: 
$$
\sigma_i^{(d)} = 
\left\{
\begin{array}{ccl}
\begin{pmatrix}
-1/d\\
\sqrt{1 - 1/d^2}\ \sigma_i^{(d-1)} 
\end{pmatrix}    && {\rm if }\quad i \leq d, \\
&&\\
\begin{pmatrix}
1\\
0_d
\end{pmatrix}    && {\rm if }\quad i =d+1, \\
\end{array}
    \right.
$$
where $0_d$ is the all-zero vector of dimension $d$.

The elements of such a sequence are easily shown to belong to the unit-sphere and to be pairwise at distance 
$$
\delta_d =  \sqrt{ 2 \ \frac{d+1}{d}},
$$
a quantity satisfying the immediate induction: $\delta_d = \sqrt{1-1/d^2}\ \delta_{d-1}$.

Computing $V_d$, the volume of the $d$-dimensional simplex with vertices $(\sigma_i^{(d)})_{1 \leq i \leq d+1}$
can be done using the generalisation of Heron's formula, known as the Cayley-Menger determinant \cite{Sommer}. 
In view of the fact that the distance between any two vertices is fixed, namely equal to $\delta_d$, we obtain
$$
V_d = \left( \frac{1}{(d!)^2\, 2^d}\  \delta_d^{2d}\ | \det U_d | \right)^{1/2},
$$
where $U_d$ is the $d \times d$ matrix with ones everywhere except on the diagonal which is filled with twos.
One can check by induction that $\det U_d= d+1$ \cite{MarcelB}.
Finally we obtain:
$$
V_d = \left(  \frac{1}{(d!)^2\ 2^d}\   \left( 2\ \frac{d+1}{d}  \right) ^{d}  (d+1) \right)^{1/2}
=  \frac{1}{d!}  \left( \frac{(d+1)^{d+1}}{d^d} \right)^{1/2}.
$$
To obtain the volume which appears naturally here in our problem, that is the one of
$$
L = L (\sigma_1^{(d)}, \sigma_2^{(d)},\dots, \sigma_{d+1}^{(d)}),
$$
we must multiply $V_d$ by a certain factor which takes into account the fact that we do not consider 
a simplex but a convex hull of the form $L (\sigma_1^{(d)}, \sigma_2^{(d)}, \dots, \sigma_{d+1}^{(d)})$. 
Indeed we may find this multiplicative factor from the fact that while a unit cube has volume 1, 
the standard simplex has a volume
\begin{eqnarray*}
\int_{0 \leq x_i \leq 1,\ x_1+ \cdots + x_{d} \leq 1}  1 \quad {\rm d}x_1\cdots {\rm d} x_{d} &  & \\
			&\hspace{-3.5cm} = &	\hspace{-1.5cm}\int_{0 \leq x_i \leq 1,\ x_1+ \cdots + x_{d-1} \leq 1}  
	\left( \int_{0 \leq x_{d} \leq 1- (x_1+ \cdots +x_{d-1})} 1\quad {\rm d} x_{d} \right) {\rm d}x_1\cdots {\rm d} x_{d-1}\\
			 & \hspace{-3.5cm} = & 	\hspace{-1.5cm}\int_{0 \leq x_i \leq 1,\ x_1+ \cdots + x_{d-1} \leq 1} 
				 \big( 1- (x_1+ \cdots x_{d-1}) \big)\quad {\rm d}x_1\cdots {\rm d} x_{d-1}\\
			& \hspace{-3.5cm} = &  \hspace{-1.5cm} \frac12 \int_{0 \leq x_i \leq 1,\ x_1+ \cdots + x_{d-2} \leq 1} 
				 \big( 1- (x_1+ \cdots x_{d-2}) \big)^2 \quad {\rm d}x_1\cdots {\rm d} x_{d-2}\\
				& \hspace{-3.5cm} = &  \hspace{-1.5cm} \cdots   \\
			& \hspace{-3.5cm} = &  \hspace{-1.5cm} \frac{1}{(d-1)!} \int_{0}^{1}  ( 1- x_1 )^{d-1} \quad {\rm d}x_1\\
			& \hspace{-3.5cm} = &  \hspace{-1.5cm} \frac{1}{d\ !} .
\end{eqnarray*}

This leads us to $d!\,V_d$ as the volume of the polyhedron $L(\sigma_1^{(d)}, \sigma_2^{(d)}, \dots, \sigma_{d+1}^{(d)})$ 
in dimension $d$ (in dimension 2, the area of the regular hexagon is twice the area of the equilateral triangle, while 
in dimension 3, the volume of the rhombic dodecahedron is equal to 6 times the one of the tetrahedron).

Conjecture \ref{conj78}, which generalises our results in dimensions 2 and 3, states that 
we have to multiply this by the homothetic factor $m^d$ to obtain the correct asymptotic value 
for $\DD^{(d)} (\Bcal^{(d+1)}_m)$, namely $d!\,V_d\ m^d$.

\section{Proof of Theorem \ref{th1}: lower bound}

The integer $m$ being given, put $q=q(m)$ and define $c=m-q$.
Consider $S$ to be the sequence consisting of 
$\begin{pmatrix}
m \\
m 
\end{pmatrix}
$
repeated $m^2-(m-1)c$ times, 
$\begin{pmatrix}
c \\
-m 
\end{pmatrix}$ 
repeated $2 m^2-m$ times and 
$
\begin{pmatrix}
-m \\
m-1 
\end{pmatrix}   
$ repeated $m^2+mc$ times. 
Note that $S$ is a sequence over $\llbracket -m,m \rrbracket ^2$, the support of which has size 3, 
that its length is 
$$
m^2-(m-1)c + 2 m^2-m +m^2+mc = 4m^2-m+c = 4m^2 -q,
$$
and that $S$ sums to 
\begin{eqnarray*}
&& (m^2-(m-1)c)  
\begin{pmatrix}
m \\
m 
\end{pmatrix}   
+ (2 m^2-m)   
\begin{pmatrix}
c  \\
 -m  
\end{pmatrix}  
+ 
(m^2+mc)     \begin{pmatrix}
 -m \\
m-1
\end{pmatrix}  \\
& = &
\begin{pmatrix}
m (m^2-(m-1)c)  +c (2 m^2-m) -m(m^2+mc) \\
m (m^2-(m-1)c)  -m (2 m^2-m)) +(m-1)(m^2+mc) 
\end{pmatrix}  \\
& = &
\begin{pmatrix}
0\\
0
\end{pmatrix}
.
\end{eqnarray*}

Let us now assume that $S$ has a zero-sum subsequence $S'$, say consisting of
$
\begin{pmatrix}
m \\
m 
\end{pmatrix}
$
repeated $u$ times, 
$
\begin{pmatrix}
c \\
-m 
\end{pmatrix}
$ 
repeated $v$ times, and 
$
 \begin{pmatrix}
 -m \\
m-1
\end{pmatrix}
$ 
repeated $w$ times, with $u+v+w>0$ and
$$
0 \leq u \leq m^2-(m-1)c,\quad 0 \leq v \leq 2 m^2-m,\quad 0 \leq w \leq m^2+mc.
$$
This means that
\begin{equation}
\label{ligne}
um+vc-wm=0\quad \text{ and } \quad um-vm+(m-1)w=0.
\end{equation}
In other words
$$
(w-u)m = v (m-q),
$$
and
$$
(v-u)m = w(m-1).
$$
Since both $m$ and $m-q$, and $m$ and $m-1$ respectively are pairs of coprime integers, 
we deduce that $m$ divides $v$ and $w$. Therefore there are two integers $x$ and $y$ such that
$$
v=mx \quad \text{ and }\quad w=my.
$$
By \eqref{ligne}, we deduce that
$$
u=my-xc =my-(m-q)x \quad \text{ and }\quad u=mx-(m-1)y,
$$
which implies
$$
(2m-q)x=(2m-1)y.
$$

We now argue that $\gcd (2m-q, 2m-1)=1$. 
Indeed, assume for a contradiction that $p$ is a prime dividing both $2m-q$ and $2m-1$. 
Then it divides $(2m-1)-(2m-q)=q-1$, thus $p \leq q-1$. 
By definition of $q$, and since $p$ is prime, $p$ divides $m$. 
Since it divides $2m-1$ also, we obtain $p=1$, a contradiction. 

From $\gcd (2m-q, 2m-1)=1$ we infer that $2m-1$ divides $x$ and in particular $x \geq 2m-1$, 
thus $v=mx \geq (2m-1)m=2m^2-m$. 
It follows that $v=2m^2-m$ and $x=2m-1$. 
We then find that $y=2m-q$ and finally $u=m^2-(m-1)c$ which means that $S'=S$ and proves 
that $S$ is minimal.

This finishes the proof of our lower bound.

\section{Proof of Theorem \ref{th1}: the upper bound}
\label{sectThm4}

Before coming to the very proof of Theorem  \ref{th1}, we start with a useful lemma 
on the asymptotic behaviour of the function $q$, where $q(m)$ is defined as the smallest prime not dividing $m$.

\begin{lemma}
\label{qq}
For any integer $m \geq 2$,
$$
q(m) \leq 1 + 4 \log m.
$$
Moreover, if $m \geq 3$, then
$$
q(m)<m.
$$
\end{lemma}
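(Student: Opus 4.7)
I would prove the two inequalities separately, since they rest on rather different ingredients.

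For the first inequality, $q(m) \leq 1 + 4 \log m$: writing $q = q(m)$, the definition forces every prime $p < q$ to divide $m$, so the primorial $\prod_{p < q} p$ (over primes $<q$) divides $m$. Taking logarithms yields
\[
\log m \;\geq\; \sum_{p < q,\ p \text{ prime}} \log p \;=\; \theta(q-1),
\]
where $\theta$ is the Chebyshev function. The target inequality then reduces to $\theta(q-1) \geq (q-1)/4$. This is a very weak form of Chebyshev's bound; concretely, $\theta(x) \geq x/4$ for every $x \geq 2$ can be verified directly for small $x$ and deduced for large $x$ from the elementary estimate $\theta(x) \geq \tfrac{\log 2}{2}\, x$, since $\tfrac{\log 2}{2} > \tfrac14$. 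The edge case $q = 2$ forces $m$ odd, hence $m \geq 3$, and the inequality $2 \leq 1 + 4 \log 3$ settles it.

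For the second inequality, $q(m) < m$ whenever $m \geq 3$: I would split on the divisibility of $m$ by $2$ and $3$. If $m$ is odd, then $q(m) = 2 < 3 \leq m$. If $m$ is even but not divisible by $3$, then the parity of $m$ together with $m \geq 3$ forces $m \geq 4$, and $q(m) = 3 < m$. The substantial case is $6 \mid m$: writing $q(m) = p_{k+1}$ with $p_1 < p_2 < \cdots$ the sequence of all primes, the condition $6 \mid m$ forces $k \geq 2$, and the primorial $p_1 \cdots p_k$ divides $m$. Bertrand's postulate gives $p_{k+1} \leq 2 p_k$; for $k = 2$ the direct check $p_3 = 5 < 6 = p_1 p_2$ applies, while for $k \geq 3$ one has $p_1 \cdots p_{k-1} \geq 6$, whence $p_1 \cdots p_k \geq 6\,p_k > 2\,p_k \geq p_{k+1}$. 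In both subcases, $q(m) = p_{k+1} < p_1 \cdots p_k \leq m$.

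The main obstacle is the first inequality, where the invocation of a Chebyshev-type lower bound on $\theta$ carries the analytical weight; the second inequality is purely combinatorial, relying only on Bertrand's postulate and a direct verification in the base case $k = 2$.
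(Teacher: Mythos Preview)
Your proof is correct, and for the first inequality it follows the same reduction as the paper: from the primorial dividing $m$ one gets $\vartheta(q-1)\le\log m$, and the task becomes showing $\vartheta(x)\ge x/4$. The paper invokes Dusart's explicit estimate $\vartheta(x)\ge x\bigl(1-3.965/\log^2 x\bigr)$, which forces a separate direct check of the cases $q(m)\in\{2,3,5,7\}$, whereas you appeal to a cruder elementary Chebyshev bound and only isolate $q=2$. Both routes are fine; yours is slightly more self-contained in that it does not need a sharp explicit constant. The genuine divergence is in the second inequality: the paper simply observes that $1+4\log m<m$ for $m\ge 11$ and checks $3\le m\le 10$ by hand, thereby reusing the first part, while you give an independent argument via Bertrand's postulate and the primorial bound $p_{k+1}<p_1\cdots p_k$. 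Your approach costs an extra classical ingredient (Bertrand) but decouples the two statements entirely; the paper's is shorter once the first inequality is in hand.
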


\begin{proof}
Assume that $q(m) \leq 10$. Since it is a prime, $q(m)$ can be equal to $2,3,5$ or 7.
If $q(m)=2$, $m \ge 3$; if $q(m)=3$, $m \ge 2$; if $q(m)=5$, $m \ge 6$ and 
if $q(m)=7$, $m \ge 30$. In these four cases, an immediate computation shows that 
the inequality is valid.

Let us now assume that $q(m) \geq 11$.
By definition, the product ${\displaystyle \prod_{p<q(m),\ p\, {\rm prime}} p}$ divides $m$, thus, 
using the classical number-theoretical notation $\vartheta$ for the Chebyshev function \cite{HW}, we obtain: 
$$
\vartheta (q(m)-1) = \sum_{p<q(m),\ p\, {\rm prime}} \log p \leq \log m.
$$ 
Dusart proved (this is Theorem 4.2 in \cite{Dusart}) that
$$
 \forall x \geq 2 :\quad \quad  \vartheta (x) \geq x - 3.965 \frac{x}{\log^2 x}.
$$
We therefore have
$$
\frac14 (q(m)-1) \leq \left( 1 - \frac{3.965}{\log^2 10} \right) (q(m)-1) \leq \vartheta (q(m)-1) \leq \log m,
$$
due to the fact that $q(m)$ is by definition larger than $10$. The first result follows.

That $q(m)<m$ follows from the first inequality for $m \geq 11$ and from a direct computation for $3 \leq m \leq 10$.

\end{proof}

Let us now proceed with the proof of Theorem  \ref{th1}.

The case $m=2$ can be considered independently since an exhaustive computer search is possible. It will 
simplify the presentation of the forthcoming argument (mainly due to the fact that the second part of Lemma \ref{qq} 
cannot be applied when $m=2$).
One checks that
$$
\DD^{(3)} (\llbracket -2,2 \rrbracket ^2) = 13 = 4 \times 2^2 -3,
$$
and that, up to the eight symmetries of the square (generated by $\varphi : (x,y) \mapsto (-x,y)$ and $\psi : (x,y) \mapsto (y,x)$), the only minimal zero-sum sequence over $\llbracket -2,2 \rrbracket ^2$ having a support of size $3$ and length $13$ is the one consisting of $\begin{pmatrix}
2 \\
2
\end{pmatrix}
$ repeated $5$ times, $\begin{pmatrix}
-2 \\
1
\end{pmatrix}
$ repeated $2$ times, and $\begin{pmatrix}
-1 \\
-2
\end{pmatrix}
$ 
repeated $6$ times. Since $q(2)=3$, the result is proved for $m=2$.

From now on, we assume $m \geq 3$.

We start by a reformulation of the problem for which we need some notation.
Let 
$$
A =\left(\begin{array}{ccc}
a_{1,1} &   a_{1,2} & a_{1,3}  \\ 
a_{2,1} &   a_{2,2} & a_{2,3}  \\ 	
\end{array} 
\right)
$$
be a $2 \times 3$ matrix with coefficients in $\llbracket -m, m \rrbracket$. 
For $i=1,2$ or $3$, we denote by $A_i$ the $2 \times 2$ submatrix of $A$ obtained by erasing the $i$-th column
and let 
$$
\Delta (A)=\gcd \big( \det(A_1),\det(A_2),\det(A_3) \big).
$$ 
From now on, we define ${\mathcal C}$ to be the set of $2 \times 3$ matrices with coefficients in $\llbracket -m, m \rrbracket$ 
such that $\det(A_1),\det(A_2),$ $\det(A_3) \neq 0$ and $\Delta (A)=1$.

Finally, we shall say that two  $2 \times 3$ matrices $A$ and $B$ with coefficients in $\llbracket -m, m \rrbracket$ are 
{\em analogous} if $B$ can be deduced from $A$ by exchanging columns and applying one of the eight symmetries of the square 
already mentioned (those generated by $\varphi : (x,y) \mapsto (-x,y)$ and $\psi : (x,y) \mapsto (y,x)$). Notice that 
two analogous matrices $A$ and $B$ satisfy $\Delta (A) = \Delta (B)$.

\begin{lemma}
Let $m \geq 3$ and $S$ be a minimal zero-sum sequence with a support of size 3, $\{s_1, s_2, s_3 \} \subseteq \llbracket -m, m \rrbracket^2$, 
such that $\DD^{(3)} (\llbracket -m,m \rrbracket ^2)  = |S|$.
Let $A$ be the matrix, the $i$-th column of which is $s_i$ ($i=1,2,3$), then we have $A  \in  {\mathcal C}$ and
$$
\DD^{(3)} (\llbracket -m,m \rrbracket ^2)  = \sum_{i=1}^3 | \det A_i |.    
$$
\end{lemma}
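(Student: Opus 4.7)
My plan is to reduce $|S|$ to a linear-algebraic invariant of $A$ and then use the lower bound on $\DD^{(3)}$ established in the previous section to pin down $\Delta(A)$.

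First, I would show that $s_1,s_2,s_3$ span $\RR^2$. The zero-sum condition $n_1 s_1+n_2 s_2+n_3 s_3=0$ with multiplicities $n_i\ge 1$ is positive dependence. If the three $s_i$ all lay in a one-dimensional subspace of $\ZZ^2$, then after a rotation $S$ would be a minimal zero-sum sequence over $\llb -m,m\rrb$, of length at most $2m-1$; but $2m-1<4m^2-q(m)\le |S|$ for $m\ge 3$ by Lemma \ref{qq}, a contradiction. Since any two of three positively dependent vectors in $\RR^2$ are linearly independent (cf.\ Section \ref{prtheopt}), each of the three $2\times 2$ minors $\det A_i$ is nonzero.

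Second, I would describe the integer kernel of $A\colon \ZZ^3\to\ZZ^2$. Writing $d=\Delta(A)$, the Laplace expansion of the $3\times 3$ determinant obtained by appending to $A$ either of its own rows shows that $(\det A_1,-\det A_2,\det A_3)$ lies in $\ker A$, and after division by $d$ it becomes a primitive generator of the rank-one integer kernel. Positive dependence selects signs so that the minimal positive-integer solution is $(|\det A_1|,|\det A_2|,|\det A_3|)/d$.

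Third, I would apply minimality of $S$. Every non-empty zero-sum sub-sequence of $S$ corresponds to an integer vector $(n'_1,n'_2,n'_3)$ with $0\le n'_i\le n_i$ and $\sum n'_i s_i=0$, hence equals $k\cdot (|\det A_1|,|\det A_2|,|\det A_3|)/d$ for some positive integer $k$. Writing $(n_1,n_2,n_3)=K\cdot(|\det A_1|,|\det A_2|,|\det A_3|)/d$, the non-empty zero-sum sub-sequences are indexed by $k\in\{1,\dots,K\}$; minimality of $S$ rules out $k<K$, so $K=1$ and
$$|S|=n_1+n_2+n_3=\frac{1}{d}\sum_{i=1}^{3}|\det A_i|.$$

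Finally, I would bound $d$. Since each entry of $A_i$ lies in $\llb -m,m\rrb$, one has $|\det A_i|\le 2m^2$, so $\sum|\det A_i|\le 6m^2$. Combined with $|S|=\DD^{(3)}(\llb -m,m\rrb^2)\ge 4m^2-q(m)$ from the previous section, this gives
$$d\le \frac{6m^2}{4m^2-q(m)}<2\qquad\text{for every }m\ge 3,$$
the last inequality using $q(m)<m\le m^2$ from Lemma \ref{qq}. Hence $d=1$, so $A\in\mathcal{C}$ and $|S|=\sum_{i=1}^{3}|\det A_i|$. The key technical point is the third step, where the absence of a proper non-empty zero-sum sub-sequence forces the multiplicity vector to coincide with the primitive positive generator of $\ker A$; the remaining steps are essentially bookkeeping involving the earlier lower bound and the asymptotics of $q$.
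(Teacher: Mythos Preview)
Your proposal is correct and follows essentially the same approach as the paper: rule out the collinear case via the one-dimensional bound $2m-1$ against the lower bound $4m^2-q(m)$, identify $\ker A\cap\ZZ^3$ with the multiples of $(\det A_1,-\det A_2,\det A_3)/\Delta(A)$, read off $|S|=\sum_i|\det A_i|/\Delta(A)$ from minimality, and squeeze $\Delta(A)<2$ using $|\det A_i|\le 2m^2$. The only cosmetic difference is that you derive the nonvanishing of the three minors in one stroke from the positive-dependence remark of Section~\ref{prtheopt}, whereas the paper argues each minor separately by reducing again to the one-dimensional case; your route is slightly cleaner but otherwise identical in substance.
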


\begin{proof}
Firstly, we notice that if $A$ has rank 1, then the three columns of $A$ are collinear. 
In this case, the length of our minimal zero-sum sequence $S$ with support $\{  s_1, s_2, s_3\}$ 
would be at most the upper bound obtained for the one-dimensional case, which is $2m-1$. 
Yet, it follows from Theorem \ref{th1} that
$$
\DD^{(3)} (\llbracket -m,m \rrbracket ^2)  \geq 4m^2 -q(m)
$$
which gives a contradiction. We may therefore assume that $A$ has rank 2.
Moreover, we must have 
\begin{equation}
\label{detnonnul}
\det(A_1),\det(A_2),\det(A_3) \neq 0
\end{equation}
for, if say $\det(A_1)=0$ then $s_2$ and $s_3$ are proportional but in this case, 
in any linear combination of $s_1, s_2$ and $s_3$ equal to zero, the coefficient of $s_3$ itself must be zero 
and we are back again to the one-dimensional case.

By Cramer's method, it is readily seen that the  one-dimensional kernel of $A$ is equal to the set of multiples of 
$$
r = \frac{1}{\Delta (A)}
\begin{pmatrix}
\det(A_1) \\
 -\det(A_2)\\
 \det(A_3)
\end{pmatrix}
$$
a vector with integral coefficients, and indeed the one with the smallest integral coefficients. 
In particular, all the elements of $(\ker A ) \cap \ZZ^3$ are integral multiples of $r$.
Denoting by $\ell_A$ the smallest  $\ell_1$-norm of a non-zero  
integral solution $X$ to the system $AX=0$, we find 
$$
\ell_A = \frac{\sum_{i=1}^3 | \det A_i |}{\Delta (A)},
$$
and we observe that $\ell_A$ is the length of our minimal zero-sum sequence $S$.
Thus, by assumption,
\begin{equation}
\label{D3par les det}
\DD^{(3)} (\llbracket -m,m \rrbracket ^2)  = \ell_A = \frac{\sum_{i=1}^3 | \det A_i |}{\Delta (A)}.
\end{equation}

Again, by the lower bound we obtained in the previous section (see Theorem \ref{th1}), we know that
\begin{equation}
\label{rho}
\ell_A  \geq 4m^2 -q(m).
\end{equation}
Since for $i=1,2,3$,  $ | \det A_i |\leq 2 m^2$, we obtain 
$$
\ell_A \leq  \frac{6m^2}{\Delta (A)} 
$$
and using \eqref{rho}, we get
$$
\Delta (A) = \gcd_{i \in \{1,2,3 \} }( \det A_i) \leq \frac{6m^2}{4m^2 -q(m)} <2
$$
in view of $q(m) \leq m <m^2$ (for $m \geq 3$),  by Lemma \ref{qq}.
From this we deduce  
$$
\Delta (A) = \gcd_{i \in \{1,2,3 \} }  ( \det A_i)=1.
$$
This, with \eqref{detnonnul}, proves that $A \in {\mathcal C}$. Moreover, by \eqref{D3par les det},
$$
\DD^{(3)} (\llbracket -m,m \rrbracket ^2)  = \sum_{i=1,2,3} | \det A_i |.
$$
\end{proof}

Let us introduce ${\mathcal C}'$ the subset of ${\mathcal C}$ consisting of matrices $A$ satisfying the sign pattern given by
$$
\left(\begin{array}{ccc}
+& -& +\\ 
+&+&-  \\ 	
\end{array} 
\right).
$$
We show how to restrict our research on this subset of ${\mathcal C}$.

\begin{lemma}
\label{lelele6}
Let $m \geq 3$ and $S$ be a minimal zero-sum sequence with a support of size 3, $\{s_1, s_2, s_3 \} \subseteq \llbracket -m, m \rrbracket^2$, such that 
$\DD^{(3)} (\llbracket -m,m \rrbracket ^2)  = |S|$.
Let $A$ be the matrix, the $i$-th column of which is $s_i$ ($i=1,2,3$).
Then $A$ is analogous to a matrix $B$ belonging to ${\mathcal C'}$ and
$$
\DD^{(3)} (\llbracket -m,m \rrbracket ^2)  = \sum_{i=1}^3 | \det B_i |.   
$$
\end{lemma}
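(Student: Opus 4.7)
Proof plan.
The plan is to normalise $A$ using the analogies (column permutations and elements of the square's symmetry group $D_4 = \langle \varphi, \psi\rangle$) so that its three columns end up in the open quadrants of $\R^2$ of signs $(+,+)$, $(-,+)$, and $(+,-)$ respectively. Since any analogy permutes the three minors $\det A_i$ up to sign, it leaves $\Delta(A)$ and the multiset $\{|\det A_i|\}$ invariant; the resulting matrix $B$ will then automatically lie in $\mathcal{C}'$ and still satisfy $\sum_{i=1}^3 |\det B_i| = \DD^{(3)}(\llb -m, m\rrb^2)$.

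The first step is to show that no entry of $A$ is zero. If some entry vanishes then, after an analogy, we may assume that $s_1 = (a, 0)^T$ with $1 \leq a \leq m$; writing $s_2 = (b, c)^T$ and $s_3 = (d, e)^T$, positive dependency together with $A \in \mathcal{C}$ forces $c$ and $e$ to be nonzero and of opposite signs (say $c > 0 > e$). Then
\[
\sum_{i=1}^3 |\det A_i| = |be - cd| + a|e| + ac \leq (a+m)(|c|+|e|),
\]
and combining this estimate with the lower bound $\DD^{(3)} \geq 4m^2 - q(m)$ and the inequality $q(m) < 2m$ provided by Lemma \ref{qq} forces $a = m$ and $|c| = |e| = m$. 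But then $m$ divides each of $\det A_1 = be - cd$, $\det A_2 = ae$, and $\det A_3 = ac$, contradicting $\Delta(A) = 1$ when $m \geq 2$.

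The second step is to show that the three columns lie in three distinct open quadrants. Each column now lies in a unique open quadrant of $\R^2$. If two columns, say $s_1, s_2$, lie in the same open quadrant $Q$, then since $Q$ is a convex cone, positive dependency forces $s_3 \in -Q$, the opposite quadrant. A direct sign inspection then shows each minor $\det A_j$ to be a difference of two monomials of like sign, each bounded by $m^2$ in absolute value, so $\sum_{i=1}^3 |\det A_i| \leq 3m^2$; since $4m^2 - q(m) > 3m^2$ for every $m \geq 2$, this contradicts maximality. So the three columns occupy three distinct open quadrants.

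Finally, $D_4$ acts on the set of four open quadrants as the dihedral group on the vertices of a square (the composition $\varphi \circ \psi$ being the rotation by $\pi/2$), and this action is transitive on singletons, hence on $3$-element subsets by complementation. After applying a suitable element of $D_4$, we may thus assume the three columns lie in the open quadrants of signs $(+,+), (-,+), (+,-)$, and a final column permutation assigns them to $s_1, s_2, s_3$ in this order, producing the required matrix $B \in \mathcal{C}'$. The main obstacle of the plan is the first step, where the coprimality $\Delta(A) = 1$, the lower bound on $\DD^{(3)}$, and Lemma \ref{qq} must be combined delicately to rule out the boundary configurations; the remaining steps are essentially sign bookkeeping.
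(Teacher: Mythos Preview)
Your proof is correct and takes a genuinely different route from the paper's. The paper never attempts to show that the entries of $A$ are nonzero; it works with weak sign inequalities throughout. Its normalisation proceeds one entry at a time: first permute columns so that $|\det A_3| = \max_i |\det A_i|$, make the first column non-negative by a symmetry, then use the zero-sum property together with the pigeonhole bound $|\det A_3| \ge \DD^{(3)}/3 > m^2$ to determine the signs of $a_{1,2}$, $a_{2,2}$ and $a_{2,3}$ successively, finishing with a short case split on the sign of $a_{1,3}$ (one case being reduced to the other by a further analogy). Your approach is more structural: you first exploit $\Delta(A)=1$ together with the lower bound to exclude zero entries, then use the global inequality $\sum_i |\det A_i| > 3m^2$ to force the three columns into three distinct \emph{open} quadrants, and conclude by the transitivity of the $D_4$-action on three-element subsets of quadrants. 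Your argument is cleaner and yields the slightly sharper conclusion that all entries of $B$ are strictly nonzero; the paper's argument is more hands-on and avoids the separate treatment of vanishing entries, at the price of an ad hoc final case analysis.
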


We notice that two analogous matrices $A$ and $B$ satisfy $\sum_{i=1}^3 | \det A_i | = \sum_{i=1}^3 | \det B_i |$.
Lemma \ref{lelele6} expresses the fact that in order to compute the maximum of $\sum_{i=1}^3 | \det A_i |$ over ${\mathcal C}$
it is enough to restrict ourselves to matrices belonging to ${\mathcal C}'$ and that every matrix of ${\mathcal C}$ 
attaining the value $\DD^{(3)} (\llbracket -m,m \rrbracket ^2)$ is, up to a reordering of the columns and symmetries, 
a matrix of ${\mathcal C}'$.

\begin{proof}
We denote
$$
A = \left(\begin{array}{ccc}
a_{1,1} &   a_{1,2} & a_{1,3}  \\ 
a_{2,1} &   a_{2,2} & a_{2,3}  \\ 	
\end{array} 
\right).
$$

By permutation of the columns, we may assume that $\max_{i=1,2,3} | \det A_i | =| \det A_3 |$ and 
by the symmetry of coordinates and permutation of the vectors
$$
a_{1,1} , a_{2,1}  \geq 0.
$$
Such a symmetry is tantamount to multiplying by $-1$ a line or a column, 
which does not change the absolute values of the determinants
and is compatible with our assumption that $| \det A_3 | = \max_{i \in \{ 1,2,3\}} | \det A_i |$.

By the zero-sum sequence property, there is a sum of the three columns of $A$ with positive coefficients which is equal to zero. 
Thus, we must have $ a_{1,2}$ or $ a_{1,3} \leq 0$. 
We may assume without loss of generality that 
$$
a_{1,2}  \leq 0.
$$

Since by assumption $\DD^{(3)} (\llbracket -m,m \rrbracket ^2)  \geq 4m^2 -q(m)$  and $q(m) \leq  m-1 \leq  m^2 / 3 $ 
(for any $m \geq 3$, this bound follows from Lemma \ref{qq})		
$$
| \det A_3 | = \max_{i \in \{ 1,2,3\}} | \det A_i | \geq \frac{\sum_{i=1,2,3} | \det A_i |}{3} = 
\frac{\DD^{(3)} (\llbracket -m,m \rrbracket ^2) }{3} \geq  \frac{4m^2 -q(m) }{3}  \geq    \frac{11}{9}\ m^2.
$$
This yields
$$
| a_{1,1} a_{2,2} - a_{2,1}a_{1,2}| \geq \frac{11}{9} \ m^2 > m^2
$$
and the two terms $a_{1,1} a_{2,2}$ and $a_{2,1}a_{1,2}$ must be of opposite sign.
As $a_{1,2}a_{2,1}\leq 0$ and $a_{1,1} \geq 0$, we deduce 
$$
a_{2,2} \geq 0.
$$

Considering the second coordinate of the three terms of the support implies, by the zero-sum sequence property, 
$$
a_{2,3}  \leq 0.
$$

For the matrix $A$, we thus have only two possible combinations of signs
$$
\left(\begin{array}{ccc}
+& -& - \\ 
+&+&-  \\ 	
\end{array} 
\right)
$$
or
$$
\left(\begin{array}{ccc}
+& -& +\\ 
+&+&-  \\ 	
\end{array} 
\right)
$$
depending on the sign of $a_{1,3}$.

In the first case, exchanging the first two vectors and applying a symmetry (the first coordinate is replaced by its opposite), 
we arrive in the second case situation. Therefore, without restriction, we may assume that we are in the case
$$
\left(\begin{array}{ccc}
+& -& +\\ 
+&+&-  \\ 	
\end{array} 
\right).
$$

Denoting by $B$ the matrix obtained after all the transformations we performed from the beginning, starting from $A$, 
we obtain a matrix $B$ analogous to $A$ such that $B \in {\mathcal C}'$ and 
$$
\DD^{(3)} (\llbracket -m,m \rrbracket ^2)  = \sum_{i=1}^3 | \det A_i |=  \sum_{i=1}^3 | \det B_i |.    
$$
\end{proof}

In view of the two previous lemmas, computing $\DD^{(3)} (\llbracket -m,m \rrbracket ^2)$ consists in 
maximising $\sum_{i=1}^3 | \det A_i |$ on matrices of ${\mathcal C}'$ :
$$
\DD^{(3)} (\llbracket -m,m \rrbracket ^2) = \max_{A \in {\mathcal C}'} \sum_{i=1}^3 | \det A_i |.
$$
Using the sign pattern of ${\mathcal C}'$, we may reorganise  the sum of determinants appearing in this maximum: 
the function we have to maximise for 
$A \in{\mathcal C}'$ is
\begin{eqnarray}
&  =  &  |a_{1,1} a_{2,2} - a_{1,2}a_{2,1}|+ |a_{1,3} a_{2,1} - a_{2,3}a_{1,1}| +    |a_{2,3}a_{1,2}-a_{1,3} a_{2,2} |  \nonumber \\
	&  =  &    ( a_{2,2} - a_{2,3}) a_{1,1}  +       (a_{1,3}   - a_{1,2} )     a_{2,1} 	   +      |a_{2,3}a_{1,2}-a_{1,3} a_{2,2} | \label{defdef}
\end{eqnarray}
We define this quantity as $f (a_{1,1} ,a_{1,2} ,a_{1,3},a_{2,1} , a_{2,2} , a_{2,3})$.

We now fix $M \in {\mathcal C}'$, a matrix attaining the maximum
$\max_{A \in {\mathcal C}'}\sum_{i=1}^3 | \det A_i | = \DD^{(3)} (\llbracket -m,m \rrbracket ^2) $ and write
\begin{equation}
\label{defofM}
M=\left(\begin{array}{ccc}
x_{1} &   y_{1} & z_{1}  \\ 
x_{2} &   y_{2} & z_{2}  \\ 	
\end{array} 
\right).
\end{equation}
We derive some properties on it. For this purpose, we denote by
$$
{\mathcal M}^{\ast}_{2,3}(\llbracket -m,m \rrbracket ^2)
$$
the set of all $2 \times 3$ matrices with coefficients in $\llbracket -m, m \rrbracket$ that satisfy the same sign pattern
$$
\left(\begin{array}{ccc}
+& -& +\\ 
+&+&-  \\ 	
\end{array}
\right)
$$
as the elements in ${{\mathcal C}'}$, but not necessarily the other conditions.

\begin{lemma}
\label{lelemme1}
One has $x_{1}  =m$.
\end{lemma}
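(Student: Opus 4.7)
The plan is to exploit the fact that in the expression (\ref{defdef}) for $f$, the entry $a_{1,1} = x_1$ appears with coefficient $a_{2,2} - a_{2,3} = y_2 - z_2$, which is non-negative by the sign pattern of $\mathcal{C}'$ (since $y_2 \geq 0$ and $z_2 \leq 0$). Thus $f$ is a non-decreasing function of $x_1$, so if we could show this coefficient is strictly positive, increasing $x_1$ all the way up to $m$ would produce a strictly larger value of $f$, provided the resulting matrix still lies in $\mathcal{C}'$.

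First I would verify that $y_2 - z_2 > 0$ strictly. If on the contrary $y_2 = z_2 = 0$, then $\det M_3 = -x_2 y_1$ gives $|\det M_3| \leq m^2$; but the argument in the proof of Lemma \ref{lelele6} shows $|\det M_3| = \max_i|\det M_i| \geq \frac{4m^2 - q(m)}{3} > m^2$ for $m \geq 3$ (using $q(m) < m$ by Lemma \ref{qq}), a contradiction.

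Then, arguing by contradiction, I would assume $x_1 \leq m-1$, set $j := m - x_1 \geq 1$, and introduce the candidate matrix $M^\star$ obtained from $M$ by replacing $x_1$ with $m$ and leaving all other entries unchanged. Then $M^\star$ lies in $\mathcal{M}^*_{2,3}(\llbracket -m, m\rrbracket^2)$ with the same sign pattern as $M$, and a direct computation gives $f(M^\star) = f(M) + j(y_2 - z_2) > f(M)$. The non-vanishing of the three minors of $M^\star$ is easy to verify: $\det M^\star_1 = D_1$ is unchanged, and $\det M^\star_2 = m z_2 - x_2 z_1 = 0$ would force (by the sign pattern) $z_2 = 0$ and $x_2 z_1 = 0$, whence $\det M_2 = x_1 z_2 - x_2 z_1 = 0$, contradicting $M \in \mathcal{C}'$; and analogously for $\det M^\star_3$.

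The delicate part, and the main obstacle, is the coprimality condition $\Delta(M^\star) = 1$. A first observation is that $\Delta(M^\star) \mid D_1$ and, since $\det M^\star_2 - \det M_2 = j z_2$ and $\det M^\star_3 - \det M_3 = j y_2$, that $\Delta(M^\star)$ divides $j \gcd(y_2, z_2)$. If $\Delta(M^\star) = 1$, then $M^\star \in \mathcal{C}'$ and the inequality $f(M^\star) > f(M)$ contradicts the maximality of $M$. Otherwise, I would consider the whole family of matrices $M^{(k)}$ obtained by replacing $x_1$ with $x_1 + k$ for $k = 1, \ldots, j$: each satisfies $f(M^{(k)}) = f(M) + k(y_2 - z_2) > f(M)$, and the task becomes finding one $k$ for which $\Delta(M^{(k)}) = 1$ and the minors are nonzero. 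This should follow from a sieve argument exploiting the bound $|D_1| \leq 2m^2$ (controlling the primes $p \mid D_1$) together with the fact that, for each such $p$, the congruence $p \mid \Delta(M^{(k)})$ excludes at most one residue class of $k$ modulo $p$; in case the range $\{1, \ldots, j\}$ is too short, an additional unit perturbation of $y_1$ or $z_1$, which leaves the coefficient of $x_1$ in $f$ untouched, should provide the extra flexibility needed to reach a matrix of $\mathcal{C}'$ with strictly larger $f$-value.
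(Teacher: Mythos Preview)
Your approach tries to stay inside $\mathcal{C}'$ and exhibit a strictly better matrix, but the coprimality condition $\Delta(M^\star)=1$ is a genuine obstacle that you do not resolve. The proposed sieve over $k\in\{1,\dots,j\}$ is not worked out: when $j=1$ (i.e.\ $x_1=m-1$) there is a single candidate, and nothing you wrote prevents $\Delta(M^{(1)})>1$. The fallback of ``an additional unit perturbation of $y_1$ or $z_1$'' is vague; such a perturbation alters two of the three minors and also the value of $f$ itself (through the term $|a_{2,3}a_{1,2}-a_{1,3}a_{2,2}|$ and the coefficient $a_{1,3}-a_{1,2}$ of $a_{2,1}$), so one would need to argue that the net change in $f$ stays positive while simultaneously forcing $\Delta=1$, keeping all entries in $\llbracket -m,m\rrbracket$, and preserving the sign pattern. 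None of this is carried out, and as written the proposal is not a proof.

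The paper's argument avoids the arithmetic constraint entirely. Rather than constructing a competitor in $\mathcal{C}'$, it relaxes the maximum to the larger set $\mathcal{M}^{\ast}_{2,3}(\llbracket -m,m\rrbracket^2)$ (same sign pattern, but no coprimality or non-vanishing conditions), over which $f$ can be optimised freely coordinate by coordinate. Under the hypothesis $a_{1,1}\le m-1$ this unconstrained maximum is computed explicitly and shown to be at most $4m^2-m$, contradicting the already-established lower bound $\DD^{(3)}(\llbracket -m,m\rrbracket^2)\ge 4m^2-q(m)$ together with $q(m)<m$ from Lemma~\ref{qq}. The point you are missing is that the lower bound already encodes the arithmetic; one never needs to manufacture a coprime perturbation.

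A minor correction: your argument for $y_2-z_2>0$ invokes $|\det M_3|=\max_i|\det M_i|$, but that ordering was used to produce the matrix $B$ in Lemma~\ref{lelele6} and is not assumed of the maximiser $M$. The conclusion is still correct, more cheaply: if $y_2=z_2$ then the sign pattern forces $y_2=z_2=0$, whence $\det M_1=y_1z_2-z_1y_2=0$, contradicting $M\in\mathcal{C}'$.
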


\begin{proof}
Otherwise, $x_{1} \leq m-1$ and
\begin{eqnarray*}
 \DD^{(3)} (\llbracket -m,m \rrbracket ^2)    & =     & f (x_{1}, y_{1}, z_{1}, x_{2}, y_{2}, z_{2}) \\
   & =   & \max_{A \in {\mathcal C}',\ 
   a_{1,1} \leq m-1}  f (a_{1,1} ,a_{1,2} ,a_{1,3},a_{2,1} , a_{2,2} , a_{2,3})\\
   & \leq & \max_{A \in {\mathcal M}^{\ast}_{2,3}(\llbracket -m,m \rrbracket ^2),\  a_{1,1} \leq m-1}  f (a_{1,1} ,a_{1,2} ,a_{1,3},a_{2,1} , a_{2,2} , a_{2,3})\\
    & \leq & \max_{A \in {\mathcal M}^{\ast}_{2,3}(\llbracket -m,m \rrbracket ^2) ,\  a_{1,1} = m-1}  f (a_{1,1} ,a_{1,2} ,a_{1,3},a_{2,1} , a_{2,2} , a_{2,3})
\end{eqnarray*}
in view of the fact that the coefficient of $a_{1,1}$ in $f$ (see \eqref{defdef}) is $a_{2,2}  - a_{2,3} \geq 0$,
by our rule of signs on ${\mathcal M}^{\ast}_{2,3}(\llbracket -m,m \rrbracket ^2)$. 
Notice that there is no arithmetic constraint anymore in this maximum. 
Thus, whatever the value of $a_{2,2}  - a_{2,3}$  is, the maximum is attained for 
$$
a_{1,1}=m-1.
$$ 
Then, the coefficient of $a_{2,1}$  in the maximum is $a_{1,3}  - a_{1,2} \geq 0$ (see \eqref{defdef} and the sign pattern again) 
which is therefore attained on ${\mathcal M}^{\ast}_{2,3}(\llbracket -m,m \rrbracket ^2)$ for 
$$
a_{2,1}=m.
$$ 
Thus
$$
\DD^{(3)} (\llbracket -m,m \rrbracket ^2)   \leq  
\max_{ A \in {\mathcal M}^{\ast}_{2,3}(\llbracket -m,m \rrbracket ^2),\  a_{1,1} = m-1, a_{2,1}=m} ( a_{2,2} - a_{2,3}) (m-1) + (a_{1,3} - a_{1,2} ) m + |a_{2,3}a_{1,2}-a_{1,3} a_{2,2} |.
$$

If $a_{2,3}a_{1,2}-a_{1,3} a_{2,2} \geq 0$, then in the function we have to maximise
\begin{eqnarray*}
&&( a_{2,2} - a_{2,3}) (m-1) + (a_{1,3} - a_{1,2} ) m + |a_{2,3}a_{1,2}-a_{1,3} a_{2,2} | \\
&=& ( a_{2,2} - a_{2,3}) (m-1) + (a_{1,3} - a_{1,2} ) m + a_{2,3}a_{1,2}-a_{1,3} a_{2,2} \\
&=& (a_{2,3}  - m) a_{1,2} +( a_{2,2} - a_{2,3}) (m-1)+ a_{1,3} (m-a_{2,2})
\end{eqnarray*}
the coefficient of $a_{1,2}$ is $a_{2,3}  - m \leq -m < 0$. Therefore the maximum is attained for 
$$
a_{1,2}=-m,
$$ 
while, the coefficient of $a_{2,3}$ is $a_{1,2}  - (m-1) =- (2m-1) < 0$, so that the maximum is attained for 
$$
a_{2,3}=-m.
$$ 
Finally, it remains to maximise the quadratic function in $a_{2,2}$ and $a_{1,3}$:
\begin{eqnarray*}
&&(m-1)a_{2,2} +m^2 + ma_{1,3}  +m^2 -m +m^2  -a_{1,3} a_{2,2} \\
&=& 3m^2-m +(m-1)a_{2,2} +ma_{1,3}   -a_{1,3} a_{2,2} ,
\end{eqnarray*}
which is maximal for $a_{1,3} =m$ and  $a_{2,2}=0$. As a consequence, the upper bound we obtain for 
$\DD^{(3)} (\llbracket -m,m \rrbracket ^2) $ is 
$$
3 m^2 -m+m^2=4m^2 -m,
$$
a contradiction with the lower bound of Theorem \ref{th1} (for $m \geq 3$) using the inequality $q(m)<m$ of Lemma \ref{qq}.

In the case $a_{2,3}a_{1,2}-a_{1,3} a_{2,2} \leq 0$, we proceed in the same way. 

\end{proof}

\begin{lemma}
\label{lelemme222}
One has $x_{2}  =m$.
\end{lemma}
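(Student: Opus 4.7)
The plan is to mirror exactly the proof of Lemma~\ref{lelemme1}, now exploiting the coefficient of $a_{2,1}$ rather than that of $a_{1,1}$, knowing $x_1=m$ from the previous lemma.

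Arguing by contradiction, I would suppose that $x_2\le m-1$. Since $M\in\mathcal{C}'$ and $x_1=m$ by Lemma~\ref{lelemme1}, I bound
\[
\DD^{(3)}(\llb -m,m\rrb^2) \;=\; f(m,y_1,z_1,x_2,y_2,z_2) \;\le\; \max\nolimits_{\mathcal{N}}\, f(a_{1,1},\dots,a_{2,3}),
\]
where $\mathcal{N}$ is the set of matrices in $\mathcal{M}^\ast_{2,3}(\llb -m,m\rrb^2)$ with $a_{1,1}=m$ and $a_{2,1}\le m-1$ (no coprimality or non-vanishing condition). The coefficient of $a_{2,1}$ in~\eqref{defdef} equals $a_{1,3}-a_{1,2}\ge 0$ thanks to the sign pattern of $\mathcal{C}'$, so the maximum is attained at $a_{2,1}=m-1$, and it remains to maximize
\[
g := m(a_{2,2}-a_{2,3}) + (m-1)(a_{1,3}-a_{1,2}) + |a_{2,3}a_{1,2}-a_{1,3}a_{2,2}|
\]
for $a_{1,2},a_{2,3}\in\llb-m,0\rrb$ and $a_{1,3},a_{2,2}\in\llb 0,m\rrb$.

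I would then split on the sign of $a_{2,3}a_{1,2}-a_{1,3}a_{2,2}$. In the first case ($\ge 0$), $g$ is multilinear in each variable, so its maximum on the box is attained at a vertex. The coefficient of $a_{1,2}$ is $a_{2,3}-(m-1)\le-(m-1)<0$, forcing $a_{1,2}=-m$; then the coefficient of $a_{2,3}$ becomes $-2m<0$, forcing $a_{2,3}=-m$. The remaining bilinear expression in $(a_{1,3},a_{2,2})$ is $3m^2-m+ma_{2,2}+(m-1)a_{1,3}-a_{1,3}a_{2,2}$, whose four corner values are $3m^2-m$, $4m^2-2m$, $4m^2-m$, $4m^2-2m$, so $g\le 4m^2-m$. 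In the second case ($\le 0$), substituting $s=-a_{2,3}$, $t=-a_{1,2}$ and first optimizing in $(a_{1,3},a_{2,2})$ (whose coefficients $m-1+a_{2,2}$ and $m+a_{1,3}$ are positive) sends $(a_{1,3},a_{2,2})=(m,m)$, reducing to $3m^2-m+ms+(m-1)t-st$, again multilinear, again bounded by $4m^2-m$ at its corners; the sign constraint is then verified at the optimal corner.

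Combining the two cases,
\[
\DD^{(3)}(\llb -m,m\rrb^2) \;\le\; 4m^2-m.
\]
By the lower bound of Theorem~\ref{th1} proved in the preceding section and by Lemma~\ref{qq} (which ensures $q(m)<m$ for $m\ge 3$), one has $\DD^{(3)}(\llb -m,m\rrb^2)\ge 4m^2-q(m)>4m^2-m$, a contradiction; hence $x_2=m$. The only delicate point is being systematic about the two cases and verifying that the vertex of the box selected by the linear programming step actually lies in the region where the chosen sign of $a_{2,3}a_{1,2}-a_{1,3}a_{2,2}$ holds; but this is immediate since $s,t\le m$ makes $a_{2,3}a_{1,2}=st\le m^2=a_{1,3}a_{2,2}$ at the relevant vertices.
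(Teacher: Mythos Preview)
Your proof is correct and follows essentially the same approach as the paper's own argument: assume $x_2\le m-1$, relax to $\mathcal{M}^\ast_{2,3}$ with $a_{1,1}=m$ and $a_{2,1}=m-1$, split on the sign of $a_{2,3}a_{1,2}-a_{1,3}a_{2,2}$, and in each case push the remaining variables to box corners to obtain $4m^2-m$, contradicting $4m^2-q(m)$. The only differences are cosmetic: you evaluate all four corners of the residual bilinear form where the paper names the maximizing corner directly, and you spell out the second sign case explicitly where the paper simply writes ``we proceed in the same way.''
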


\begin{proof}
If our conclusion is false, then $x_{2}  \leq m-1$.
Since $x_{1} = m$ by Lemma \ref{lelemme1}, we have
\begin{eqnarray*}
\DD^{(3)} (\llbracket -m,m \rrbracket ^2)  & =     &   f (x_{1}, y_{1}, z_{1}, x_{2}, y_{2}, z_{2}) \\
							& = &  f (m , y_{1}, z_{1}, x_{2}, y_{2}, z_{2})\\
							   & =   & \max_{A \in {\mathcal C}',\ 
   \ a_{1,1} =m,\ a_{2,1}  \leq m-1}  f (m ,a_{1,2} ,a_{1,3},a_{2,1} , a_{2,2} , a_{2,3})\\
    & \leq & \max_{A \in {\mathcal M}^{\ast}_{2,3} (\llbracket -m,m \rrbracket ^2),\  a_{1,1} = m,\ a_{2,1}  =m-1}  f (m , a_{1,2} ,a_{1,3},m-1, a_{2,2} , a_{2,3}).
\end{eqnarray*}
Indeed, the coefficient of $a_{2,1}$  in the maximum \eqref{defdef} is $a_{1,3}  - a_{1,2} \geq 0$ 
which is therefore attained for a maximal $a_{2,1}$, that is
$$
a_{2,1}=m-1.
$$ 
Thus
$$
\DD^{(3)} (\llbracket -m,m \rrbracket ^2)  \leq  
\max_{ A \in {\mathcal M}^{\ast}_{2,3},\  a_{1,1} = m,\ a_{2,1}=m-1}  
m( a_{2,2} - a_{2,3}) + (m-1)(a_{1,3} - a_{1,2} ) + |a_{2,3}a_{1,2}-a_{1,3} a_{2,2} |.
$$

If $a_{2,3}a_{1,2}-a_{1,3} a_{2,2} \geq 0$, then in
$$
 m( a_{2,2} - a_{2,3})  +       (m-1) (a_{1,3}   - a_{1,2} ) 	   +      a_{2,3}a_{1,2}-a_{1,3} a_{2,2}
$$
the coefficient of $a_{1,2}$ is $a_{2,3}  - (m-1) \leq -(m-1) < 0$. Therefore the maximum is attained for 
$$
a_{1,2}=-m,
$$ 
while, the coefficient of $a_{2,3}$ is $a_{1,2}  - m= - 2m < 0$, so that the maximum is attained for 
$$
a_{2,3}=-m.
$$ 
Finally, it remains to maximise the quadratic function in $a_{2,2}$ and $a_{1,3}$:
\begin{eqnarray*}
&& ma_{2,2} +m^2 + (m-1) a_{1,3}  +m^2 -m +m^2  -a_{1,3} a_{2,2} \\
&=& 3m^2-m +m a_{2,2} +(m-1) a_{1,3}   -a_{1,3} a_{2,2} ,
\end{eqnarray*}
which is maximal for $a_{2,2}=m$  and $a_{1,3} =0$. Finally the upper bound we obtain is
$$
\DD^{(3)} (\llbracket -m,m \rrbracket ^2)  \leq 3 m^2 -m+m^2=4m^2 -m,
$$
a contradiction with the lower bound of Theorem \ref{th1} (for $m \geq 3$) using the inequality $q(m)<m$ of Lemma \ref{qq}.

In the case $a_{2,3}a_{1,2}-a_{1,3} a_{2,2} \leq 0$, we proceed in the same way. 

\end{proof}

\begin{lemma}
\label{lelemme3}
One has $y_{1}  =z_{2}=-m$.
\end{lemma}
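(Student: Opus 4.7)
The plan is to mimic the approach of Lemmas \ref{lelemme1} and \ref{lelemme222}: assume by contradiction that $y_1 \geq -(m-1)$ and show that this forces $f := \sum_{i=1}^3 |\det M_i| < 4m^2 - q(m)$, contradicting the lower bound of Theorem \ref{th1}. The essential new ingredient is that, since $M$ must correspond to a minimal zero-sum sequence of length $\DD^{(3)}(\llb -m, m\rrb^2)$, the one-dimensional kernel of $M$ is generated by a vector with all positive entries (these are the multiplicities of the three support vectors). Given that $x_1 = x_2 = m$ by the previous two lemmas, $\det M_2 = m(z_2 - z_1) < 0$ and $\det M_3 = m(y_2 - y_1) > 0$, so the all-negative case is impossible and we are forced to have $\det M_1 = y_1 z_2 - z_1 y_2 > 0$; that is, the analysis is automatically restricted to the case $y_1 z_2 > z_1 y_2$ (Case A of the previous proofs), in which
\[
f = m(y_2 - z_2 + z_1 - y_1) + y_1 z_2 - z_1 y_2.
\]

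Next, I would observe that the coefficient of $y_1$ in this expression is $-m + z_2 \leq -m < 0$, so $f$ is strictly decreasing in $y_1$, and under the hypothesis $y_1 \geq -(m-1)$ the upper bound is attained at $y_1 = -(m-1)$. Substituting, the resulting coefficient of $z_2$ becomes $-(2m-1) < 0$, so one may further push $z_2 = -m$. The remaining expression in $y_2, z_1 \in [0, m]$ rewrites neatly as
\[
f = 3m^2 - 2m + m(y_2 + z_1) - y_2 z_1 = 4m^2 - 2m - (m - y_2)(m - z_1) \leq 4m^2 - 2m.
\]
Since $q(m) < m$ for $m \geq 3$ by Lemma \ref{qq}, we have $4m^2 - 2m < 4m^2 - q(m) \leq \DD^{(3)}(\llb -m, m\rrb^2)$, the desired contradiction. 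Hence $y_1 = -m$. The completely analogous argument with the roles of $y_1$ and $z_2$ swapped (the coefficient of $z_2$ in $f$ being $-m + y_1 \leq -m < 0$, and the rest proceeding by symmetry) yields $z_2 = -m$.

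The main obstacle I anticipate is the Case A restriction. Unlike in Lemmas \ref{lelemme1} and \ref{lelemme222}, where the cruder bound on $\mathcal{M}^{\ast}_{2,3}(\llb -m, m \rrb^2)$ using only the sign pattern already suffices (yielding at most $4m^2 - m$, which beats $4m^2 - q(m)$), the corresponding sign-pattern-only bound here is only $4m^2$, attained in Case B at $(y_1, z_2, z_1, y_2) = (-(m-1), -m, m, m)$. Such Case B configurations cannot arise from a minimal zero-sum sequence, since their one-dimensional kernel has mixed signs ($\det M_1 < 0$ while $-\det M_2, \det M_3 > 0$), so no positive integer vector of multiplicities exists. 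Recognising that the correspondence with a minimal zero-sum sequence forces Case A is exactly what brings the accessible bound from $4m^2$ down to $4m^2 - 2m$ and lets the contradiction go through.
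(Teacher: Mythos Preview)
Your proof is correct, and in one respect it is sharper than the paper's own argument. The paper assumes $y_1+z_2\ge -(2m-1)$ and, in the case $z_2y_1-z_1y_2\ge 0$, first pushes $z_1,y_2$ to $m$ and then uses the sum constraint to obtain $f\le 4m^2-m$; it then dismisses the opposite sign case with ``we proceed in the same way''. You instead treat $y_1$ and $z_2$ one at a time and, crucially, you use the fact that $M$ comes from an actual minimal zero-sum sequence: since the kernel of $M$ is generated by $(\det M_1,-\det M_2,\det M_3)$ and the last two entries are already positive once $x_1=x_2=m$, the multiplicity vector forces $\det M_1>0$, so only the case $y_1z_2>z_1y_2$ can occur.

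This extra observation is not cosmetic. If one stays purely on the relaxed domain $\mathcal{M}^{\ast}_{2,3}(\llbracket -m,m\rrbracket^2)$ with only the sign pattern and the constraint $a_{1,2}+a_{2,3}\ge-(2m-1)$, then in the case $z_2y_1-z_1y_2\le 0$ the maximum of $f$ is $4m^2$ (take $z_1=y_2=m$, $y_1=-m$, $z_2=-(m-1)$), which does \emph{not} contradict the lower bound $4m^2-q(m)$. So the paper's ``same way'' is at best elliptic; your positivity-of-multiplicities argument is exactly what rules this case out and makes the contradiction go through cleanly. Your final bound $f\le 4m^2-2m$ under $y_1\ge-(m-1)$ is even a bit stronger than the paper's $4m^2-m$, and $q(m)<m$ from Lemma~\ref{qq} suffices (indeed $q(m)<2m$ would already do). The symmetric run for $z_2$ is immediate, as you note.
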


\begin{proof}
If our conclusion is false, then $y_{1}+z_{2}  \geq -(2m-1)$ and we have by Lemmas \ref{lelemme1} and \ref{lelemme222} 
\begin{eqnarray*}
\DD^{(3)} (\llbracket -m,m \rrbracket ^2) & =     &   f (x_{1}, y_{1}, z_{1}, x_{2}, y_{2}, z_{2}) \\
							& =     &   f (m , y_{1}, z_{1}, m, y_{2}, z_{2})\\
				   & =   & \max_{A \in {\mathcal C}',\  a_{1,1} = a_{2,1} =m}  f (m ,a_{1,2} ,a_{1,3}, m , a_{2,2} , a_{2,3})\\
  				 & \leq & \max_{A \in {\mathcal M}^{\ast}_{2,3} (\llbracket -m,m \rrbracket ^2),\  a_{1,1} = a_{2,1}  =m,\ a_{1,2}+a_{2,3}  \geq -(2m-1)}  
				 								f (m ,a_{1,2} ,a_{1,3},m , a_{2,2} , a_{2,3}).
\end{eqnarray*}

If $a_{2,3}a_{1,2}-a_{1,3} a_{2,2} \geq 0$, then
\begin{eqnarray*}
& &  f (m ,a_{1,2} ,a_{1,3},m , a_{2,2} , a_{2,3})\\
&=& m( a_{2,2} - a_{2,3})  +      m (a_{1,3}   - a_{1,2} )  	   +      a_{2,3}a_{1,2}-a_{1,3} a_{2,2} \\
&=& m( a_{2,2} + a_{1,3} - (a_{1,2}+a_{2,3} ))+ a_{2,3}a_{1,2}-a_{1,3} a_{2,2}.
\end{eqnarray*}
In this function, the coefficients of $a_{1,3}$ and $a_{2,2}$ are respectively $m-a_{2,2}$ and $m-a_{1,3}$ which are both non-negative.
Therefore the maximum is attained for 
$$
a_{1,3}=a_{2,2}=m.
$$ 
It follows, using $a_{1,2}+a_{2,3} \geq -(2m-1)$, that
\begin{eqnarray*}
 f (m ,a_{1,2} ,a_{1,3},m , a_{2,2} , a_{2,3})& \leq  &  m( 2m - (a_{1,2}+a_{2,3} ))+ a_{2,3}a_{1,2}-m^2\\
   & \leq  & m(4m-1)-m^2 + a_{2,3}a_{1,2}\\
   &\leq & 3m^2 - m+m^2 = 4m^2 -m,
 \end{eqnarray*}
a contradiction with the lower bound of Theorem \ref{th1} (for $m \geq 3$) using the inequality $q(m)<m$ of Lemma \ref{qq}.

In the case $a_{2,3}a_{1,2}-a_{1,3} a_{2,2} \leq 0$, we proceed in the same way. 
\end{proof}

Finally, using the previous three lemmas, and going back to Lemma \ref{lelele6} and the definition \eqref{defofM} of $M$, the matrix 
attaining the maximum we are computing,  we obtain
\begin{equation}
\label{eqfinale}
\DD^{(3)} (\llbracket -m,m \rrbracket ^2)  =   m( y_{2} + z_{1} + 2m ) + | m^2-z_{1} y_{2} | =m( y_{2} + z_{1} + 2m ) + (m^2-z_{1} y_{2} ),
\end{equation}
on recalling that the to unknown coefficients $y_{2}$ and $z_{1}$ satisfy together with the other coefficients 
of the matrix $M$ the arithmetic conditions included in the definition of ${\mathcal C}'$.
We may indeed rewrite the arithmetic condition $\Delta (M)=1$ which appears in the definition of $ {\mathcal C}' $ as
$$
\gcd ( m^2 + my_{2}, m^2 + mz_{1}, m^2 - y_{2}z_{1} ) = 1
$$
which implies that
$$
\gcd (m, z_{1} ) =  \gcd (m, y_{2} ) =1\quad \text{ and }\quad  z_{1}  \neq y_{2}  .
$$
Indeed, if an integer $d$ divides for instance $m$ and $y_{2}$ then, 
it divides $\gcd ( m^2 + my_{2}, m^2 + mz_{1}, m^2 - y_{2}z_{1} )=1$ thus $d=1$. 
Moreover, if $z_{1}$ could be equal to $y_{2}$, say $a$, 
then, $m+a \geq m >1$ would divide all three terms in the above gcd, a contradiction.  
Thus $z_{1}$ and $y_{2}$ must be two distinct integers coprime to $m$. 

But the coefficient of $y_{2} $ in the right-hand side of  \eqref{eqfinale} is $m-z_{1} \geq 0$ and the one of 
$z_{1} $ is $m-y_{2} \geq 0$, thus to maximise this polynomial we have to have as big as possible values for these 
two variables. It turns out that the biggest integer coprime to $m$ and less than $m$ is $m-1$ and that the biggest 
integer coprime to $m$ and less than $m-1$ is $m-q(m)$. Finally we obtain replacing $z_{1}$ by $m-1$ and $y_2$ by 
$m-q(m)$ or conversely (the function is symmetric in these two variables) :
\begin{eqnarray*}
\DD^{(3)} (\llbracket -m,m \rrbracket ^2)  & \leq & 3m^2 + m \big( m-1+m-q(m) \big)  - (m-1)(m-q(m)) \\
								& = & 4m^2 -q(m),
\end{eqnarray*}
which concludes the proof of the upper bound in Theorem \ref{th1}.

The method used above is indeed constructive and gives directly the `in particular' statement. 
Notice that, at the end of the proof, we may have exchange the values given to $y_{2}$ and $z_{1}$, 
but the two corresponding matrices are analogous in the sense we defined above. 
\smallskip

A final remark: proving that $\DD^{(3)} (\llbracket -m,m \rrbracket ^2)  \leq 4 m^2$ 
is significantly easier than what is above. In particular, it does not need to take into consideration 
the arithmetic constraints.

\section{Proof of Theorem \ref{thmbox3}}

We consider separately the cases $m$ odd and $m$ even.

Assume first that $m$ is odd and define the sequence $S$
consisting of
$\begin{pmatrix}
- m \\
- m \\
- m \\
\end{pmatrix}
$
repeated $4m^3-8m^2+5m-2$ times, 
$\begin{pmatrix}
- m \\
m -1\\
m \\
\end{pmatrix}
$
repeated $4m^3-2m^2+2m$ times, 
$\begin{pmatrix}
m\\
-m \\
m-2\\ 
\end{pmatrix}$  
repeated $4m^3-2m^2+m$ times and finally
$
\begin{pmatrix}
m-1 \\
m\\
-m 
\end{pmatrix} 
$
repeated $4m^3-4m^2+2m$  times. We may compute that
\begin{eqnarray*}
|S| & = & (4m^3-8m^2+5m-2)+(4m^3-2m^2+2m)+(4m^3-2m^2+m)+(4m^3-4m^2+2m) \\
	&=& 16 m^3 -16m^2 + 10m -2.
\end{eqnarray*}

It is a zero-sum sequence since
\begin{eqnarray*}
&\hspace{-6.5cm}(4m^3-8m^2+5m-2)
\begin{pmatrix}
- m \\
- m \\
- m \\
\end{pmatrix}
+
(4m^3-2m^2+2m)
\begin{pmatrix}
- m \\
m -1\\
m \\
\end{pmatrix}
& \\
& 
\hspace{5cm}+(4m^3-2m^2+m)
\begin{pmatrix}
m\\
-m \\
m-2\\ 
\end{pmatrix} 
+ (4m^3-4m^2+2m)
\begin{pmatrix}
m-1 \\
m\\
-m 
\end{pmatrix} 
=
\begin{pmatrix}
0 \\
0\\
0 
\end{pmatrix}. &
\end{eqnarray*}

To show that it is a minimal zero-sum sequence, we solve 
$$
x_1
\begin{pmatrix}
- m \\
- m \\
- m \\
\end{pmatrix}
+
x_2
\begin{pmatrix}
- m \\
m -1\\
m \\
\end{pmatrix} 
+x_3
\begin{pmatrix}
m\\
-m \\
m-2\\ 
\end{pmatrix} 
+ x_4
\begin{pmatrix}
m-1 \\
m\\
-m 
\end{pmatrix} 
=
\begin{pmatrix}
0 \\
0\\
0 
\end{pmatrix}
$$
with
\begin{eqnarray}
\nonumber
x_1 \leq 4m^3-8m^2+5m-2, && x_2 \leq 4m^3-2m^2+2m,\\
\label{xis}
x_3 \leq 4m^3-2m^2+m, &\quad {\rm and } \quad& x_4 \leq  4m^3-4m^2+2m.
\end{eqnarray}

This implies
$$
x_4 (m-1) = (x_1+x_2-x_3)m,\ x_2 (m-1) = (x_1+x_3-x_4)m,\ x_3 (m-2) = (x_1-x_2+x_4)m.
$$
Therefore, there are three integers $l_1,l_2$ and $l_3$ such that 
\begin{eqnarray}
x_4 & = & l_1\ m \label{eq1} \\
x_1+x_2-x_3 & = & l_1\ (m-1) \label{eq2} \\
x_2 & = & l_2\ m \label{eq3} \\
x_1+x_3-x_4 & = & l_2\ (m-1) \label{eq4} \\
x_3 &=& l_3\ m \label{eq5} \\
x_1-x_2+x_4 & = & l_3\ (m-2) \label{eq6}
\end{eqnarray}
Summing \eqref{eq2} and \eqref{eq4} gives $2x_1 +x_2 -x_4 = (l_1+l_2) (m-1)$ which, with \eqref{eq1} and \eqref{eq3}, yields
\begin{equation}
\label{eqx1}
x_1 = \frac{(l_1+l_2) (m-1) + (l_1-l_2)m}2.
\end{equation}
In an analogous way, one gets $2x_3 -x_2 -x_4 = (l_2-l_1) (m-1)$ and then
\begin{equation}
\label{eqx3}
x_3 = \frac{(l_1+l_2) m + (l_2-l_1)(m-1)}2.
\end{equation}
Replacing in \eqref{eq5}, we obtain
\begin{equation}
\label{eqA}
(l_1+l_2-2l_3) m = (l_1-l_2)(m-1).
\end{equation}
With \eqref{eq6}, we get $(l_1+l_2) (m-1) + 3(l_1-l_2)m =2l_3 (m-2)$ and after replacing $2(m-1)$ by $m+(m-2)$, 
\begin{equation}
\label{eqB}
(7 l_1 -5l_2) m = (4l_3 -l_1-l_2)(m-2).
\end{equation}

Equation \eqref{eqA} gives, using the coprimality condition $\gcd (m, m-1)=1$, the existence of an integer $q$ such that 
$$
l_1 -l_2 =qm,\quad {\rm and}\quad l_1+l_2-2l_3=q(m-1),
$$
and \eqref{eqB} gives the existence of an integer $r$ such that 
$$
7 l_1 -5l_2 =r(m-2) ,\quad {\rm and}\quad 4l_3 -l_1-l_2 =rm.
$$
With the two first equalities and the fourth one, one solves in $(l_1,l_2,l_3)$, namely: 
$$
l_1 = \frac{(3qm-2q+rm)}{2}, \quad l_2 = \frac{(qm-2q+rm)}{2}, \quad {\rm and}\quad l_3 = \frac{(qm-q+rm)}{2}
$$
and finally, injecting these values in the third equality above, one finds: 
$$
r=(1-4m)q.
$$
Finally we obtain
\begin{eqnarray}
\nonumber
l_1 & = & -( 2m^2-2m+1)q, \\
\nonumber
l_2 & = & -( 2m^2-m+1)q,  \\
\nonumber
l_3 & = & -(2m^2-m+\frac12 )q. 
\end{eqnarray}
This final equality implies that $q$ must be non-positive and even, say $q=-2q'$ and we finally obtain
\begin{eqnarray}
\nonumber
x_2 & = & (4m^3 -2m^2+2m) q',\\
\nonumber
x_3 & = &   (4m^3 -2m^2+m) q', \\
\nonumber
x_4 & = & (4m^3 -4m^2+2m) q'.
\end{eqnarray}
Computing $x_1$, for instance with \eqref{eqx1}, finally gives $x_1 =  (4m^3 -8m^2+5m-2) q'$.
In view of the constraints \eqref{xis}, this gives $q'=1$ and finally the minimality of $S$.

\medskip

In the case of even $m$, we define the sequence $S$ consisting of
$\begin{pmatrix}
- m \\
- m \\
- m \\
\end{pmatrix}
$
repeated $4m^3-6m^2+4m-1$ times, 
$\begin{pmatrix}
- m \\
m -1\\
m \\
\end{pmatrix}
$
repeated $4m^3-4m^2+2m$ times, 
$\begin{pmatrix}
m\\
-(m-1) \\
m-1\\ 
\end{pmatrix}$   
repeated $4m^3-2m^2+m$ times and finally  
$
\begin{pmatrix}
m-1 \\
m\\
-m 
\end{pmatrix} 
$
repeated $4m^3-4m^2+m$  times. 
We may compute that
\begin{eqnarray*}
|S| & = & (4m^3-6m^2+4m-1)+(4m^3-4m^2+2m)+(4m^3-2m^2+m)+(4m^3-4m^2+m) \\
	&=& 16 m^3 -16 m^2 +8m-1.
\end{eqnarray*}

The proof that this is a minimal zero-sum sequence is analogous to the one of the odd case and left to the reader.

\section{Appendix: the rhombic dodecahedron based on a regular tetrahedron maximises the volume}
\label{append}

In this section we sketch the proof announced in the title.

Up to an homothety, we are led to the following problem: how to choose four points on the unit sphere 
$w_1,w_2,w_3, w_4$ so that the volume of the polyhedron 
\begin{eqnarray*}
D(w_1,w_2,w_3,w_4) & = & \Conv ( w_1,w_2, w_3, w_4,\,
w_1+w_2, w_1+w_3, w_1+w_4, w_2+w_3, w_2+w_4, w_3+w_4, \\
&& \hspace{2.8cm} w_1+w_2+w_3, w_1+w_2+w_4, w_1+w_3+w_4, w_2+w_3+w_4 ).
\end{eqnarray*}
is maximal ?

Up to a change of axis and a rotation, we may choose $w_1$ and $w_2$ of the form
$$
w_1 = 
\begin{pmatrix}
1 \\
0 \\
0
\end{pmatrix}
\quad
{\rm and}\quad 
w_2 = 
\begin{pmatrix}
\cos \vartheta \\
\sin \vartheta \\
0
\end{pmatrix}
$$
with $0 \leq \vartheta \leq \pi$.
As for the two other points, we may look for them as
$$
w_3 = 
\begin{pmatrix}
\cos \vartheta_1 \cos \varphi_1 \\
\cos \vartheta_1 \sin \varphi_1 \\
\sin \vartheta_1
\end{pmatrix}
\quad
{\rm and}\quad 
w_4 = 
\begin{pmatrix}
\cos \vartheta_2 \cos \varphi_2  \\
\cos \vartheta_2 \sin \varphi_2\\
\sin \vartheta_2
\end{pmatrix}
$$
with $\vartheta_1 \in ] 0, \pi/2 ], \vartheta_2 \in [- \pi/2, 0 [$ and $\pi  \leq \varphi_1, \varphi_2 \leq 2 \pi$.

The volume of the rhombic dodecahedron $D(w_1,w_2,w_3,w_4) $ is easily seen to be
\begin{eqnarray*}
V & = & \sin \vartheta \big( \sin \vartheta_1-  \sin \vartheta_2 \big) +
| \cos \vartheta_1  \sin \varphi_1 \sin \vartheta_2 -  \sin \vartheta_1 \cos \vartheta_2 \sin \varphi_2 | \\
&& + \big| \cos \vartheta \big( \cos \vartheta_1  \sin \varphi_1 \sin \vartheta_2 -  \sin \vartheta_1 \cos \vartheta_2 \sin \varphi_2 \big)
- \sin \vartheta  \big( \cos \vartheta_1  \cos \varphi_1 \sin \vartheta_2 -  \sin \vartheta_1 \cos \vartheta_2 \cos \varphi_2 \big)  \big|.
\end{eqnarray*}

Computing the maximum of this five-variable function is an unpleasant task. But it is classically done using partial derivative calculus.
One check first that
$$
\varphi_1 = \varphi_2
$$
and then that
$$
\vartheta_1 = - \vartheta_2
$$
and 
$$
\varphi_1 = \frac{\vartheta}{2} + \pi.
$$
Finally, we obtain $V$ as a quite simple function of only $\vartheta$ and  $\vartheta_1$:
$$
V (\vartheta, \vartheta_1) = 2 \sin \vartheta \sin \vartheta_1 + 2 \sin (\vartheta /2) \sin 2\vartheta_1 .
$$
Differentiating with respect to $\vartheta$ gives
\begin{equation}
\label{theta1}
\cos \vartheta_1 = - \frac{\cos \vartheta}{\cos (\vartheta /2)}.
\end{equation}
Differentiating finally with respect to $\vartheta_1$ gives
$$
0 = \frac12 \frac{\partial  V}{\partial \vartheta_1} = \sin \vartheta \cos \vartheta_1 + 2 \sin (\vartheta /2) \cos 2\vartheta_1 .
$$
Replacing the occurrences of $\vartheta_1$ using \eqref{theta1} leads to
$$
(1+ \cos \vartheta ) \cos^2 (\vartheta /2) =2  \cos^2 \vartheta
$$
that is $X(2X-1)-2 (2X-1)^2+X=0$ if $X =  \cos^2 (\vartheta /2)$, which gives $X= 1/3$ and finally $\cos \vartheta = -1/3$. 
All the other parameters can then be deduced so that finally we obtain
$$
w_1 = 
\begin{pmatrix}
1 \\
0 \\
0
\end{pmatrix}
\quad
w_2 = 
\begin{pmatrix}
-1/3 \\
2 \sqrt{2}/3\\
0
\end{pmatrix}
\quad
w_3 = 
\begin{pmatrix}
-1/3\\
- \sqrt 2 / 3\\
\sqrt 2 / \sqrt 3
\end{pmatrix}
\quad
{\rm and}\quad 
w_4 = 
\begin{pmatrix}
-1/3\\
- \sqrt 2 / 3\\
- \sqrt 2 / \sqrt 3
\end{pmatrix}
$$
that is four vertices of the regular tetrahedron for $(w_1,w_2,w_3,w_4)$, as announced.

\end{document}